\def\figurename{Figure} % Replace the colon that normally appears after the Figure number by a period.
\renewcommand{\fnum@figure}[1]{\figurename~\thefigure.}
\def\tablename{Table} % Replace the colon that normally appears after the Figure number by a period.
\renewcommand{\fnum@table}[1]{\tablename~\thetable.}
\newtheorem{theorem}{Theorem}[section]
\newtheorem{lemma}[theorem]{Lemma}
\newtheorem{corollary}[theorem]{Corollary}
\newtheorem{proposition}[theorem]{Proposition}
\theoremstyle{definition}
\newtheorem{definition}[theorem]{Definition}
\newtheorem{example}[theorem]{Example}
\theoremstyle{remark}
\newtheorem{remark}[theorem]{Remark}
\numberwithin{equation}{section}
\begin{document}

\title{\bfseries\scshape{Ternary Leibniz color algebras and beyond}}

\author{\bfseries\scshape Ibrahima BAKAYOKO\thanks{e-mail address: ibrahimabakayoko27@gmail.com}\\
D\'epartement de Math\'ematiques,\\
Centre Universitaire de N'Z\'er\'ekor\'e/UJNK\\
BP 50 N'Z\'er\'ekor\'e, Guin\'ee.
% \\\bfseries\scshape Binko Mamady Tour\'e\thanks{e-mail address: btoure@gmail.com}\\
%  D\'epartement de Math\'ematiques,\\
% Centre Universitaire de N'Z\'er\'ekor\'e/UJNK\\
% BP 50 N'Z\'er\'ekor\'e, Guin\'ee.
% \\ \\ \\{\rm (Communicated by Associate Editor)}
}
 
\date{}
\maketitle 

% % % % % % \thispagestyle{empty} \setcounter{page}{1}
% % % % % % % ------- [First Page Running Head] - place it immediately after title! ------
% % % % % % \thispagestyle{fancy} \fancyhead{}
% % % % % % \fancyhead[L]{{\LARGE A}frican {\LARGE D}iaspora {\LARGE J}ournal of {\LARGE M}athematics\\
% % % % % % Volume X, Number X, pp. {\thepage--\pageref{lastpage-01} (2013)}} % put \label{lastpage-xx} on the last page!
% % % % % % \fancyhead[R]{ISSN 1539-854X  \\ {\tt{www.math-res-pub.org/adjm}}}
% % % % % % \fancyfoot{}
% % % % % % \renewcommand{\headrulewidth}{0pt}
%------------------------------------------------------------------------------

\noindent\hrulefill

\noindent {\bf Abstract.} 
The purpose of this paper is to generalize some results on ternary Leibniz algebras to the case of ternary Leibniz color algebras.
 In particular, we study color Lie triple systems.
In order to produce examples of ternary Leibniz
color algebras from Leibniz color algebras, several results on Leibniz color algebras are given.
 Then we  introduce and give some constructions of ternary Leibniz-Nambu-Poisson color algebras.
%  from either non-commutative Leibniz-Poisson color  algebras or associative color algebras.
%  Conversely, to any non-commutative ternary Nambu-Poisson color algebra one associates non-commutative Leibniz-Poisson color algebra.
% As subclass of ternary Leibniz color algebras, constructions of color Lie triple systems are provided.
 The relationship between associative trialgebras and $-1$-tridendriform algebras are investigated.
 Moreover, we give some methods of constructing modules over  ternary Leibniz-Nambu-Poisson color algebras.

\noindent \hrulefill

\vspace{.3in}

\noindent {\bf AMS Subject Classification:} 17B75; 16W50.

\vspace{.08in} \noindent \textbf{Keywords}: Ternary Leibniz color algebras, color Jordan triple systems, color Lie triple systems, 
ternary Leibniz-Nambu-Poisson color algebras, associative color trialgebras and $q$-tridendriform color algebras. 
\vspace{.3in}
% \noindent {\small Revised: April 16, 2014, June 28, 2014 $\parallel$  Accepted: July 4, 2014}
\vspace{.2in}
\section{Introduction}
The notion of n-Lie algebras were introduced by Filippov \cite{F} in 1985  as a natural generalization of Lie algebras. 
More precisely, $n$-Lie algebras are vector spaces $V$ equipped with $n$-ary operation which is skew symmetric 
for any pair of variables and satisfies the following identity :
\begin{eqnarray}
 [[x_1, x_2, \dots, x_n], y_1, y_2, \dots, y_{n-2}, y_{n-1}]=\sum_{i=1}^n[x_1, x_2, \dots, x_{i-1}, [x_i, y_1, y_2, \dots, y_{n-2}, y_{n-1}], 
x_{i+1}, \dots, x_n].\label{F}
\end{eqnarray}
For $n=3$, it reads
\begin{eqnarray}
 [[x, y, z], t, u]=[x, y, [z, t, u]]+[x, [y, t, u], z]+[[x, t, u], y, z].\nonumber
\end{eqnarray}
Whenever the identity (\ref{F}) is satisfied and the bracket fails to be totally skew symmetric we  obtain n-Leibniz algebras  \cite{JM}.
Moreover, when the bracket is skew-symmetric with respect to the last two variables, $(V, [-, -, -])$ is said to quasi-Lie 3-algebras \cite{JMC}.
If in addition, 
\begin{eqnarray}
[x, y, z]+[y, z, x]+[z, x, y]=0\nonumber
\end{eqnarray}
is satisfied for any $x, y, z\in V$, $(V, [-, -, -])$ is called a Lie triple system \cite{JMC}.

The n-Lie algebras found their applications in many fields of mathematics and Physics. For instance, Takhtajan has developed the foundations of 
the theory of Nambu-Poisson manifolds \cite{TL}.
The general cohomology theory for n-Lie algebras and Leibniz n-algebras was established in \cite{RM}.
The structure and classification theory of finite dimensional n-Lie algebras was given by Ling \cite{LW}  and many
other authors. For more details of the theory and applications of n-Lie algebras, see \cite{DI}
and the references therein.
 
Specially, 3-Lie algebras are applied to the study of the gauge symmetry and supersymmetry of multiple coincident M2-branes in \cite{BL}.
The authors in \cite{MA} studied non-commutative ternary Nambu-Poisson algebras and their Hom-type version. They provided construction results 
dealing with tensor product and direct sums of two (non- commutative) ternary (Hom-) Nambu-Poisson algebras, and examples and a 3-dimensional
 classification of non-commutative ternary Nambu-Poisson algebras.

The concept of n-Lie algebras are extended to the graded case by  Zhang T. in \cite{TZ}, in which he studied the cohomology and deformations of
 n-Lie colour algebras when $n=3$, as well as the abelian extensions of 3-Lie colour algebras. 
Ivan Kaygorodova and  Yury Popovc studied generalized derivations of n-ary color algebras \cite{IY}.

The aim of this paper is to construct ternary Leibniz-Nambu-Poisson color algebras. The paper is organized as follows. In section two, we recall basic 
notions concerning associative color algebras, Lie color algebras and averaging operator. Construction and various examples of
 Leibniz color algebras are given. In section three, we introduce ternary Leibniz color algebras and  associative color trialgebras. 
We give some constructions of ternary Leibniz color algebras (and color Lie triple systems) from other algebraic structures.
%  associative color algebras, Lie color algebras, left-symmetric color algebras, associative color dialgebras
% left-symmetric color dialgebras and color trialgebras.
 Then we study the relationship between associative trialgebras and $-1$-tridendriform algebras.
%  and give various corollaries.
 In section four, we give some constructions of ternary Leibniz-Nambu-Poisson color algebras and
%  from either associative color algebras or Leibniz-Poisson color algebras. Then we 
give two constructions methods of modules over ternary Leibniz-Nambu-Poisson color algebras.

Throughout this paper, all graded vector spaces are assumed to be over a field $\mathbb{K}$ of characteristic different from 2.

% ------------ [Running Heads - for odd and even pages] - please insert it only on page 2!
\pagestyle{fancy} \fancyhead{} \fancyhead[EC]{Ibrahima Bakayoko} 
\fancyhead[EL,OR]{\thepage} \fancyhead[OC]{Ternary Leibniz color algebras and beyond} \fancyfoot{}
\renewcommand\headrulewidth{0.5pt}
%------------------------------------------------------------------------------

\section{Preliminaries}
In this section, we give the definitions of associative color algebras, Lie color  algebras,
 averaging operators on associative and Lie color algebras, and constructions of  Leibniz color algebras.
\subsection{Basic notions}
\begin{definition}
 \begin{enumerate}
  \item [1)] Let $G$ be an abelian group. A vector space $V$ is said to be a $G$-graded if, there exists a family $(V_a)_{a\in G}$ of vector 
subspaces of $V$ such that
$$V=\bigoplus_{a\in G} V_a.$$
\item [2)] An element $x\in V$ is said to be homogeneous of degree $a\in G$ if $x\in V_a$. We denote $\mathcal{H}(V)$ the set of all homogeneous elements
in $V$.
\item [3)] Let $V=\oplus_{a\in G} V_a$ and $V'=\oplus_{a\in G} V'_a$ be two $G$-graded vector spaces. A linear mapping $f : V\rightarrow V'$ is said 
to be homogeneous of degree $b$ if 
$$f(V_a)\subseteq  V'_{a+b}, \forall a\in G.$$
If, $f$ is homogeneous of degree zero i.e. $f(V_a)\subseteq V'_{a}$ holds for any $a\in G$, then $f$ is said to be even.
 \end{enumerate}
\end{definition}
\begin{definition}
  \begin{enumerate}
   \item [1)] An algebra $(A, \cdot)$ is said to be $G$-graded if its underlying vector space is $G$-graded i.e. $A=\bigoplus_{a\in G}A_a$, and if furthermore 
$A_a\cdot A_b\subseteq A_{a+b}$, for all $a, b\in G$.

 Let $A'$ be another $G$-graded algebra.
\item  [2)] A morphism $f : A\rightarrow A'$ 
of $G$-graded algebras
is by definition an algebra morphism from $A$ to $A'$ which is, in addition an even mapping.
  \end{enumerate}
\end{definition}

\begin{definition}
 Let $G$ be an abelian group. A map $\varepsilon :G\times G\rightarrow {\bf \mathbb{K}^*}$ is called a skew-symmetric bicharacter on $G$ if the following
identities hold, 
\begin{enumerate} 
 \item [(i)] $\varepsilon(a, b)\varepsilon(b, a)=1$,
\item [(ii)] $\varepsilon(a, b+c)=\varepsilon(a, b)\varepsilon(a, c)$,
\item [(iii)]$\varepsilon(a+b, c)=\varepsilon(a, c)\varepsilon(b, c)$,
\end{enumerate}
$a, b, c\in G$,
\end{definition}
% Remark that 
% $$\varepsilon(a, 0)=\varepsilon(0, a)=1, \varepsilon(a,a)=\pm 1 \quad\mbox{for all}\quad a\in G,$$
% where $0$ is the identity of $G$.

If x and y are two homogeneous elements of degree $a$ and $b$ respectively and $\varepsilon$ is a skew-symmetric bicharacter, 
then we shorten the notation by writing $\varepsilon(x, y)$ instead of $\varepsilon(a, b)$.

Let us also recall these definitions.
\begin{definition}
An associative color algebra is a $G$-graded algebra $(A, \cdot)$ together with a bicharacter 
$\varepsilon :G\times G\rightarrow {\bf \mathbb{K}^*}$ such that 
\begin{eqnarray}
(x\cdot y)\cdot z &=&x\cdot(y\cdot z) \qquad\qquad(\mbox{associativity})
% % % \\
% % % (x\cdot y)\cdot z&=&\varepsilon(x, y)(y\cdot x)\cdot z,  \quad(\mbox{left $\varepsilon$-commutativity})\\
% % % (\mbox{respectively}\quad x\cdot (y\cdot z)&=&\varepsilon(y, z)x\cdot(y\cdot z),  \quad(\mbox{right $\varepsilon$-commutativity}))
 \end{eqnarray}
for all $x, y, z\in\mathcal{H}(A)$.
\end{definition}

\begin{definition}
 A Lie color  algebra is a triple  $(A, [\cdot, \cdot], \varepsilon)$ in which $(A, [\cdot, \cdot])$ is a $G$-graded algebra and  
$\varepsilon :G\times G\rightarrow {\bf \mathbb{K}^*}$ is a bicharacter such that
\begin{eqnarray}
 [x, y]=-\varepsilon(x, y)[y, x],\qquad\qquad\qquad\qquad (\varepsilon\mbox{-skew-symmetry})\label{ss}\qquad\\
\varepsilon(z, x)[x, [y, z]]+\varepsilon(x, y)[y, [z, x]]+\varepsilon(y, z)[z, [x, y]]=0,\;
 (\varepsilon\mbox{-Jacobi identity})\quad \label{chli}
\end{eqnarray}
for any $x, y, z\in\mathcal{H}(A)$.
\end{definition}

\begin{definition}\label{bk3}
1) An averaging operator over an associative color algebra  $(A, \cdot, \varepsilon)$ is an even linear map $\alpha : A\rightarrow A$ such
that 
$$\alpha(\alpha(x)\cdot y)=\alpha(x)\cdot \alpha(y)=\alpha(x\cdot \alpha(y)),$$
for all $x, y\in\mathcal{H}(A)$.\\
2) An averaging operator over a Lie color algebra  $(A, [\cdot, \cdot], \varepsilon)$ is an even linear map $\alpha : A\rightarrow A$ such
that 
$$\alpha([\alpha(x), y])=[\alpha(x), \alpha(y)]=\alpha([x, \alpha(y)]),$$
for all $x, y\in\mathcal{H}(A)$.
\end{definition}
\begin{definition}
 Let $(A, \cdot)$ be a $G$-graded algebra and $\lambda \in \mathbb{K}$. An even linear map $R : A\times A\rightarrow A$
 is called a Rota-Baxter operator of weight $\lambda$ on $A$ if it satisfies the identity
\begin{eqnarray}
R(x)\cdot R(y) = R\Big(R(x)\cdot y + x\cdot R(y) +\lambda x\cdot y\Big),\quad (\mbox{\it Rota-Baxter identity}) \label{rbi}
\end{eqnarray}
for any $x, y\in\mathcal{H}(A)$.
\end{definition}

% \begin{example}
% If $(A, \cdot, \varepsilon)$ is a $\varepsilon$-commutative associative color algebra and $\alpha : A\rightarrow A$ is an averaging operator
%  on $A$, then $(A, \ast, \varepsilon, \alpha)$, where $x\ast y:=\alpha(x)\cdot y$, is a left permutative color algebra.
% \end{example}
% \subsection{Preliminaries}
\subsection{Constructing Leibniz color algebras}
In this subsection, we give some results on Leibniz color algebras. They will provide examples of
 ternary Leibniz color algebras from other algebraic structures.
\begin{definition}\cite{BD} 
 A Leibniz color algebra is a $G$-graded vector space $L$ together with an even bilinear map 
$[-, -] : L\otimes L\rightarrow L$ and a bicharacter $\varepsilon : G\otimes G\rightarrow \mathbb{K}^*$  such that
 \begin{eqnarray}
   [[x, y], z]=[x, [y, z]]+\varepsilon(y, z)[[x, z], y] \label{cpa}
 \end{eqnarray}
holds, for all $x, y, z\in \mathcal{H}(L)$.
\end{definition}
Observe that a Lie color algebra is a Leibniz color algebra with $\varepsilon$-skew-symmetric bracket.
\begin{example}\label{a3c}
 Let $(A, \cdot, \varepsilon)$ be an associative color algebra. Then $(A, [-, -], \varepsilon)$ is a Lie color algebra with the bracket
\begin{eqnarray}
 [x, y]:=x\cdot y-\varepsilon(x, y)y\cdot x,\nonumber
\end{eqnarray}
for any $x, y\in \mathcal{H}(A)$.
\end{example}
\begin{example}\label{bk1}
 Let $(A, [-, -], \varepsilon)$ be a Lie color algebra and $\alpha : A\rightarrow A$ be an averaging operator. Define a new operation
$\{-, -\} : A\times A\rightarrow A$ by
$$\{x, y\}:=[\alpha(x), y].$$
Then $(A, \{-, -\}, \varepsilon)$ is a Lie color algebra. 
\end{example}
 We need the following definition for producing other examples of Leibniz color algebras.
\begin{definition}
A  Rota-Baxter Leibniz color algebra of weight $\lambda\in\mathbb{K}$ is a Leibniz color 
algebra $(L, [-, -], \varepsilon)$ together with an even linear map $R : L\rightarrow L$ that satisfies the {\it Rota-Baxter identity}
\begin{eqnarray}
[R(x), R(y)] = R\Big([R(x), y] + [x, R(y)] +\lambda [x, y]\Big),\label{rbhl}
\end{eqnarray}
for all $x, y\in\mathcal{H}(L)$.
\end{definition}
We have a similar definition for Rota-Baxter Lie color algebras.

\begin{example}\label{ed}
 Let $(L, [-, -], \varepsilon, R)$ be a Rota-Baxter Leibniz color algebra of weight $\lambda$. Then
$L(\lambda)=(L, [-, -]_\lambda, \varepsilon, R)$
is also a Rota-Baxter  Leibniz color algebra of weight $\lambda$ with respect to the bracket
$$[x, y]_\lambda=[R(x), y]+[x, R(y)]+\lambda[x, y]$$
for all $x, y\in\mathcal{H}(L)$.
\end{example}

% \begin{example}\cite{BD}
% A left-symmetric color algebra is a $G$-graded algebra $(A, \cdot)$ endowed with a bicharacter 
% $\varepsilon :G\times G\rightarrow {\bf \mathbb{K}^*}$  such that, for all $x, y, z\in \mathcal{H}(A)$,
% \begin{eqnarray}
% (x\cdot y)\cdot z-x\cdot(y\cdot z)=\varepsilon(x, y)\Big((y\cdot x)\cdot z-y\cdot(x\cdot z)\Big).\label{clsa}\nonumber
% \end{eqnarray}
% The commutator 
%  $[x, y]=x\cdot y-\varepsilon(x, y)y\cdot x$
% makes $A$ to become a Lie color algebra.
% \end{example}
 Now we give the below definition.
\begin{definition}\cite{BD}
 A $G$-graded non-associative algebra $(A, \cdot)$ with a bicharacter $\varepsilon : G\otimes G\rightarrow \mathbb{K}^*$
 satisfying, for all $x, y, z\in \mathcal{H}(A)$,
\begin{eqnarray}
(x\cdot y)\cdot z-x\cdot(y\cdot z)=\varepsilon(x, y)\Big((y\cdot x)\cdot z-y\cdot(x\cdot z)\Big),
\end{eqnarray}
is called a left-symmetric color algebra.
\end{definition}

The next result asserts that the $\varepsilon$-commutator of any left-symmetric color algebra leads to a Lie color algebra. 
\begin{theorem}\label{ghls-ghl}\cite{BD}
Let $(A, \cdot, \varepsilon)$ be a left-symmetric color algebra. Then, for any $x, y\in\mathcal{H}(A)$, the commutator 
\begin{eqnarray}
 [x, y]=x\cdot y-\varepsilon(x, y)y\cdot x,
\end{eqnarray}
makes $A$ to become a Lie color algebra.
\end{theorem}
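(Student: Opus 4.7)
The plan is to verify the two defining axioms of a Lie color algebra directly from the definition of the commutator: the $\varepsilon$-skew-symmetry \eqref{ss} and the $\varepsilon$-Jacobi identity \eqref{chli}. Throughout, I will use only the color left-symmetric identity and the bicharacter axioms.

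The $\varepsilon$-skew-symmetry is immediate. Expanding
$$-\varepsilon(x,y)[y,x]=-\varepsilon(x,y)\,y\cdot x+\varepsilon(x,y)\varepsilon(y,x)\,x\cdot y,$$
and using $\varepsilon(x,y)\varepsilon(y,x)=1$, one recovers $[x,y]$.

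For the $\varepsilon$-Jacobi identity, I would introduce the associator
$$\mathrm{as}(a,b,c):=(a\cdot b)\cdot c-a\cdot(b\cdot c),$$
so that the color left-symmetric identity reads $\mathrm{as}(x,y,z)=\varepsilon(x,y)\,\mathrm{as}(y,x,z)$. Applying the bracket formula twice gives
$$[x,[y,z]]=x\cdot(y\cdot z)-\varepsilon(y,z)\,x\cdot(z\cdot y)-\varepsilon(x,y)\varepsilon(x,z)(y\cdot z)\cdot x+\varepsilon(x,y)\varepsilon(x,z)\varepsilon(y,z)(z\cdot y)\cdot x,$$
with the two cyclic analogues obtained by permuting $(x,y,z)\mapsto(y,z,x)\mapsto(z,x,y)$. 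Multiplying these three expansions by the prefactors $\varepsilon(z,x)$, $\varepsilon(x,y)$, $\varepsilon(y,z)$ respectively and summing, the resulting twelve products should regroup into six differences of the form $\mathrm{as}(a,b,c)-\varepsilon(a,b)\,\mathrm{as}(b,a,c)$, each of which vanishes by the color left-symmetric identity. The $\varepsilon$-prefactors attached to each such pair are equalized by repeated use of $\varepsilon(a,b+c)=\varepsilon(a,b)\varepsilon(a,c)$ and $\varepsilon(a,b)\varepsilon(b,a)=1$.

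The main obstacle is purely the bookkeeping of the $\varepsilon$-prefactors: one must track the degrees carefully at each application of the bracket, because the degree of $[y,z]$ is $|y|+|z|$, and then verify that the six pairs of terms are weighted by matching scalars so the left-symmetric identity collapses them. No new conceptual input beyond the color left-symmetric identity and the bicharacter axioms is needed; the structure of the proof mirrors the classical argument that an ordinary left-symmetric algebra becomes a Lie algebra under the commutator, with $\varepsilon$-factors playing the role of Koszul signs.
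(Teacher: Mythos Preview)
Your approach is correct and is exactly the standard argument. The paper itself does not supply a proof of this theorem; it simply cites the reference \cite{BD}, so there is nothing to compare against beyond noting that your direct verification via the associator is the expected route.

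One minor slip: the twelve monomials regroup into \emph{three} vanishing combinations of the form $\mathrm{as}(a,b,c)-\varepsilon(a,b)\,\mathrm{as}(b,a,c)$, not six. Concretely, after multiplying by the cyclic prefactors you obtain
\[
-\varepsilon(z,x)\bigl(\mathrm{as}(x,y,z)-\varepsilon(x,y)\,\mathrm{as}(y,x,z)\bigr)
-\varepsilon(x,y)\bigl(\mathrm{as}(y,z,x)-\varepsilon(y,z)\,\mathrm{as}(z,y,x)\bigr)
-\varepsilon(y,z)\bigl(\mathrm{as}(z,x,y)-\varepsilon(z,x)\,\mathrm{as}(x,z,y)\bigr),
\]
each bracket vanishing by the color left-symmetric identity. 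This is purely a bookkeeping correction; your outline is otherwise sound.
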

\begin{lemma}\label{hde}\cite{B2}
 Let $(L, [-, -], \varepsilon, R)$ be a Rota-Baxter Lie color algebra of weight $0$. Then $L$ is a left-symmetric color algebra
 with $$x\cdot y=[R(x), y],$$
for $x, y\in\mathcal{H}(L)$.
\end{lemma}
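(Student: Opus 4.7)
The plan is to verify the left-symmetric color identity $(x\cdot y)\cdot z - x\cdot(y\cdot z) = \varepsilon(x, y)\bigl((y\cdot x)\cdot z - y\cdot(x\cdot z)\bigr)$ directly for the product $x\cdot y := [R(x), y]$, by combining the Rota-Baxter identity of weight $0$, the $\varepsilon$-Jacobi identity, and $\varepsilon$-skew-symmetry. Writing $A := R(x)$ and $B := R(y)$ (which have degrees $|x|$ and $|y|$ since $R$ is even), I would first expand
\[
(x\cdot y)\cdot z - x\cdot(y\cdot z) = [R([A, y]), z] - [A, [B, z]],
\]
and use the Rota-Baxter identity in the rewritten form $R([A, y]) = [A, B] - R([x, B])$ to recast this as $[[A, B], z] - [R([x, B]), z] - [A, [B, z]]$.

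Next I would derive from the cyclic $\varepsilon$-Jacobi identity the Leibniz-style reformulation $[[A, B], z] = [A, [B, z]] - \varepsilon(x, y)[B, [A, z]]$, which needs one multiplication by $\varepsilon(x, z)$ followed by two applications of $\varepsilon$-skew-symmetry. Substituting collapses the expression above to $-\varepsilon(x, y)[B, [A, z]] - [R([x, B]), z]$. Performing the symmetric computation with $x$ and $y$ swapped, multiplying by $\varepsilon(x, y)$, and using $\varepsilon(x, y)\varepsilon(y, x) = 1$, I would obtain $-[A, [B, z]] - \varepsilon(x, y)[R([y, A]), z]$ for the right-hand side of the target identity.

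Comparing the two, the nested bracket terms recombine into $[[A, B], z]$ via the same Leibniz-form Jacobi, so the whole identity reduces to the algebraic check
\[
[A, B] - R([x, B]) + \varepsilon(x, y)\,R([y, A]) = 0.
\]
By the Rota-Baxter identity, $[A, B] - R([x, B]) = R([A, y])$, and by $\varepsilon$-skew-symmetry of the Lie color bracket, $R([y, A]) = -\varepsilon(y, x)\,R([A, y])$, so $\varepsilon(x, y)\,R([y, A]) = -R([A, y])$ and the sum vanishes. The main obstacle is bookkeeping of bicharacter factors: in particular, translating between the cyclic form of the $\varepsilon$-Jacobi identity stated in the paper and the Leibniz-style form required here, and keeping track of the total degrees appearing in each nested bracket when applying $\varepsilon$-skew-symmetry.
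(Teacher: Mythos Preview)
Your argument is correct. The bookkeeping of the bicharacters is accurate: the Leibniz-style consequence $[[A,B],z]=[A,[B,z]]-\varepsilon(x,y)[B,[A,z]]$ follows from the cyclic $\varepsilon$-Jacobi identity exactly as you indicate, and the final reduction to $[A,B]-R([x,B])+\varepsilon(x,y)R([y,A])=0$ is handled cleanly by the weight-$0$ Rota--Baxter identity together with $\varepsilon$-skew-symmetry.

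As for comparison with the paper: the paper does not actually prove this lemma, it merely cites the external reference \cite{B2}. So your direct verification is more informative than what appears here. One minor stylistic remark: in your last step you only need the weaker conclusion that the bracket $[\,\cdot\,,z]$ of the displayed element vanishes, but showing the element itself is zero is of course sufficient and is the natural thing to do.
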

The below proposition comes from Theorem \ref{ghls-ghl}  and  Lemma  \ref{hde}.
\begin{proposition}
 Let $(L, [-, -], \varepsilon, R)$ be a Rota-Baxter Lie color algebra of weight $0$. Then $(L, \{-, -\}, \varepsilon, R)$ is also a Rota-Baxter 
 Lie color algebra of weight $0$ with respect to the bracket
$$\{x, y\}=[R(x), y]-\varepsilon(x, y)[R(y), x]$$
for each $x, y\in\mathcal{H}(L)$.
\end{proposition}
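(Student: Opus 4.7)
The proposition has two claims to establish: that $(L,\{-,-\},\varepsilon)$ is a Lie color algebra, and that $R$ remains a Rota-Baxter operator of weight $0$ with respect to the new bracket. The first claim is essentially a two-step application of the cited results, while the second requires a direct but short verification.

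For the Lie color algebra structure, the plan is simply to chain Lemma~\ref{hde} and Theorem~\ref{ghls-ghl}. Lemma~\ref{hde} provides a left-symmetric color algebra structure on $L$ via $x\cdot y := [R(x),y]$. Feeding this into Theorem~\ref{ghls-ghl}, the associated $\varepsilon$-commutator
$$x\cdot y - \varepsilon(x,y)\, y\cdot x = [R(x),y] - \varepsilon(x,y)[R(y),x] = \{x,y\}$$
is automatically a Lie color bracket. No independent verification of $\varepsilon$-skew-symmetry or the graded Jacobi identity is required.

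The substantive part is to show that $R$ satisfies the Rota-Baxter identity of weight $0$ with respect to $\{-,-\}$, i.e.\ $\{R(x),R(y)\} = R(\{R(x),y\} + \{x,R(y)\})$. I would expand the left-hand side as
$$\{R(x),R(y)\} = [R^{2}(x),R(y)] - \varepsilon(x,y)[R^{2}(y),R(x)],$$
using that $R$ is even so that $\varepsilon(R(x),R(y))=\varepsilon(x,y)$. Then apply the original weight-$0$ Rota-Baxter identity to each of the two summands to re-express them as $R$ applied to something, and pull $R$ outside. On the right-hand side, expand each $\{-,-\}$ by definition, sum, and use the identity $-\varepsilon(x,y)[R(y),R(x)] = [R(x),R(y)]$, which follows from $\varepsilon$-skew-symmetry together with $\varepsilon(x,y)\varepsilon(y,x)=1$. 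Both sides collapse to
$$R\bigl([R^{2}(x),y] + 2[R(x),R(y)] - \varepsilon(x,y)[R^{2}(y),x]\bigr),$$
proving equality.

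The only real obstacle is bookkeeping of the bicharacter factors: one must consistently use that $R$ is even in order to identify $\varepsilon(R(x),y)$ with $\varepsilon(x,y)$, and combine this with $\varepsilon$-skew-symmetry to convert a $[R(y),R(x)]$ term into an extra $[R(x),R(y)]$. Once that sign calculation is done cleanly, the Rota-Baxter identity for $\{-,-\}$ drops out of the Rota-Baxter identity for $[-,-]$ without further input.
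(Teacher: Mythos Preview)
Your proposal is correct and, for the Lie color algebra structure, follows exactly the paper's approach: the paper's proof consists solely of the sentence ``The below proposition comes from Theorem~\ref{ghls-ghl} and Lemma~\ref{hde},'' which is precisely your chaining of $x\cdot y=[R(x),y]$ into the $\varepsilon$-commutator.

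Where you go beyond the paper is in verifying that $R$ is again a Rota-Baxter operator of weight $0$ for $\{-,-\}$. The paper's one-line justification cites only the two results needed for the Lie structure and does not spell out the Rota-Baxter identity at all. Your direct expansion, applying the original Rota-Baxter identity to each of $[R^{2}(x),R(y)]$ and $[R^{2}(y),R(x)]$ and then using $\varepsilon$-skew-symmetry together with $\varepsilon(x,y)\varepsilon(y,x)=1$ to merge the cross terms into $2[R(x),R(y)]$, is correct and is exactly the computation one needs. So your proof is in fact more complete than the paper's, which takes this step for granted.
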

In what follows, we extend to color case some results on superalgebras \cite{A}. The different results are also proved similarly.

We call Rota-Baxter left-symmetric color algebra of weight $\lambda$, a left-symmetric color algebra endowed with an even linear map 
$R$ and satisfying the Rota-Baxter identity (\ref{rbi}).
\begin{lemma}\label{rbs}
 Let $(A, \cdot, \varepsilon, R)$ be a Rota-Baxter left-symmetric color algebra of weight $0$. Then $(A, \ast, \varepsilon, R)$ is also a
Rota-Baxter left-symmetric color algebra with respect to 
$$x\ast y:=R(x)\cdot y-\varepsilon(x, y)y\cdot R(x).$$
\end{lemma}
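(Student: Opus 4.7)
The plan is to obtain the left-symmetric part by chaining two constructions already given in the paper, and then to verify the Rota--Baxter identity for $\ast$ by a direct two-step expansion.

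Because $R$ is even, $\varepsilon(R(x),y)=\varepsilon(x,y)$, so
$$x\ast y=R(x)\cdot y-\varepsilon(R(x),y)\,y\cdot R(x)=[R(x),y],$$
where $[u,v]:=u\cdot v-\varepsilon(u,v)\,v\cdot u$ denotes the $\varepsilon$-commutator associated with the left-symmetric product $\cdot$. By Theorem~\ref{ghls-ghl}, $(A,[-,-],\varepsilon)$ is a Lie color algebra. A short direct check, which I would carry out next, shows that $R$ is a Rota--Baxter operator of weight $0$ on this Lie color algebra: expand $[R(x),R(y)]=R(x)\cdot R(y)-\varepsilon(x,y)R(y)\cdot R(x)$, apply the weight-$0$ Rota--Baxter identity \eqref{rbi} to each dot product, and reassemble to recognize $R([R(x),y]+[x,R(y)])$. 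Lemma~\ref{hde} then applies and immediately yields that $(A,\ast,\varepsilon)$ is a left-symmetric color algebra, since by the opening observation $\ast=[R(-),-]$ is precisely the product produced by that lemma.

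It remains to show that $R$ is a Rota--Baxter operator of weight $0$ for $\ast$. I would expand
$$R(x)\ast R(y)=R^{2}(x)\cdot R(y)-\varepsilon(x,y)\,R(y)\cdot R^{2}(x),$$
apply the weight-$0$ Rota--Baxter identity for $\cdot$ to each of the two products, and use linearity of $R$ to factor. After rearranging, the argument of $R$ collapses to
$$\bigl(R^{2}(x)\cdot y-\varepsilon(x,y)\,y\cdot R^{2}(x)\bigr)+\bigl(R(x)\cdot R(y)-\varepsilon(x,y)\,R(y)\cdot R(x)\bigr),$$
which is visibly $R(x)\ast y+x\ast R(y)$, as required.

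The main obstacle is purely bookkeeping of the $\varepsilon$-factors: throughout, one must repeatedly use that $R$ is even (so $\varepsilon(R(u),v)=\varepsilon(u,v)=\varepsilon(u,R(v))$) and that $\varepsilon$ is bimultiplicative, in order to keep the signs matched when substitutions are made. Once this is set up, every identity reduces to a term-by-term comparison, and no ingredients beyond Theorem~\ref{ghls-ghl}, Lemma~\ref{hde}, and the weight-$0$ Rota--Baxter identity applied twice are needed.
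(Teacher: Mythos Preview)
Your argument is correct. The chain Theorem~\ref{ghls-ghl} $\Rightarrow$ $(A,[-,-],\varepsilon)$ is Lie color, then the check that $R$ remains Rota--Baxter of weight $0$ on $[-,-]$, then Lemma~\ref{hde} applied to $(A,[-,-],\varepsilon,R)$, does indeed produce exactly the product $x\ast y=[R(x),y]$ and shows it is left-symmetric. Your verification that $R$ is again Rota--Baxter for $\ast$ is also fine; the four terms match as you wrote.

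The paper, however, does not argue this way: it states Lemma~\ref{rbs} without proof, merely pointing to the superalgebra analogue in \cite{A} and remarking that the color case is ``proved similarly'', which in that reference means a direct verification of the left-symmetric identity for $\ast$ by expanding both associators and cancelling. Your route is genuinely different and arguably cleaner within this paper's own logic, since it recycles Theorem~\ref{ghls-ghl} and Lemma~\ref{hde} instead of redoing the associator computation from scratch. The trade-off is that a reader must accept the small intermediate claim that $R$ passes to the commutator bracket; you sketch this adequately, and it is the same two-line expansion you give at the end for $\ast$.
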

\begin{proposition}
 Let $(A, \cdot, \varepsilon, R)$ be a Rota-Baxter left-symmetric color algebra of weight $0$. Then $(A, [-, -], \varepsilon)$ is Lie color
algebra, where
$$[x, y]:=R(x)\cdot y+x\cdot R(y)-\varepsilon(x, y)\Big(R(y)\cdot x+y\cdot R(x)\Big).$$
\end{proposition}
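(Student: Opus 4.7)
The plan is to derive the statement by composing the two preceding results: Lemma \ref{rbs} (which produces a new Rota--Baxter left-symmetric color algebra structure $\ast$ from $\cdot$) and Theorem \ref{ghls-ghl} (which passes from a left-symmetric color algebra to a Lie color algebra via the $\varepsilon$-commutator). No independent verification of the Jacobi identity should be required.

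Concretely, first I would invoke Lemma \ref{rbs}: since $(A,\cdot,\varepsilon,R)$ is a Rota--Baxter left-symmetric color algebra of weight $0$, the operation
\begin{eqnarray*}
x\ast y := R(x)\cdot y - \varepsilon(x,y)\, y\cdot R(x)
\end{eqnarray*}
endows $A$ with a new left-symmetric color algebra structure $(A,\ast,\varepsilon)$. Next I would apply Theorem \ref{ghls-ghl} to this structure: the $\varepsilon$-commutator $[x,y] := x\ast y - \varepsilon(x,y)\, y\ast x$ is a Lie color algebra bracket.

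The only actual computation is to check that this commutator coincides with the bracket in the statement. Expanding,
\begin{eqnarray*}
x\ast y - \varepsilon(x,y)\, y\ast x
&=& R(x)\cdot y - \varepsilon(x,y)\, y\cdot R(x) - \varepsilon(x,y)\bigl(R(y)\cdot x - \varepsilon(y,x)\, x\cdot R(y)\bigr),
\end{eqnarray*}
and using the bicharacter identity $\varepsilon(x,y)\varepsilon(y,x)=1$ to collapse the last term to $x\cdot R(y)$, this becomes exactly
\begin{eqnarray*}
R(x)\cdot y + x\cdot R(y) - \varepsilon(x,y)\bigl(R(y)\cdot x + y\cdot R(x)\bigr),
\end{eqnarray*}
as required.

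I do not anticipate a serious obstacle; the entire argument is a one-line reduction plus a bookkeeping calculation. The only point requiring mild care is tracking the bicharacter factors correctly when regrouping terms, which is handled by the identity $\varepsilon(x,y)\varepsilon(y,x)=1$. In particular, one does not need to re-prove that $R$ remains a Rota--Baxter operator relative to $\ast$, since only the left-symmetric color algebra part of Lemma \ref{rbs} feeds into Theorem \ref{ghls-ghl}.
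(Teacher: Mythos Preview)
Your proposal is correct and matches the paper's own proof, which simply states that it follows from Theorem \ref{ghls-ghl} and Lemma \ref{rbs}. Your explicit expansion of the $\varepsilon$-commutator of $\ast$ to recover the stated bracket is a welcome addition that the paper leaves implicit.
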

\begin{proof}
 It follows from Theorem \ref{ghls-ghl} and Lemma \ref{rbs}.
\end{proof}
\begin{lemma}\label{rba}
  Let $(A, \cdot, \varepsilon, R)$ be a Rota-Baxter associative color algebra of weight $\lambda=-1$. Let us define
the even bilinear operation $\ast : A\otimes A\rightarrow A$ on homogeneous elements $x, y\in A$ by 
$$x\ast y:=R(x)\cdot y-\varepsilon(x, y)y\cdot R(x)-x\cdot y.$$
Then $(A, \ast, \varepsilon)$ is a left-symmetric color algebra.
\end{lemma}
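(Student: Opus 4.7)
My approach is to factor the verification through a color dendriform structure. Define
\begin{equation*}
x\triangleright y := R(x)\cdot y - x\cdot y, \qquad x\triangleleft y := x\cdot R(y),
\end{equation*}
so that the new operation is exactly the $\varepsilon$-commutator
$x\ast y = x\triangleright y - \varepsilon(x,y)\, y\triangleleft x$. The plan then has two steps: first, show that $(A,\triangleright,\triangleleft,\varepsilon)$ is a color dendriform algebra; second, invoke the standard (color analogue of the Chapoton--Livernet) observation that the $\varepsilon$-commutator of any color dendriform algebra is left-symmetric.

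For the first step, I would check the three (argument-order-preserving) axioms
\begin{align*}
(x\triangleleft y)\triangleleft z &= x\triangleleft(y\triangleleft z + y\triangleright z),\\
(x\triangleright y)\triangleleft z &= x\triangleright(y\triangleleft z),\\
(x\triangleright y + x\triangleleft y)\triangleright z &= x\triangleright(y\triangleright z).
\end{align*}
Each reduces, by a brief expansion using associativity of $\cdot$, to the weight $-1$ Rota--Baxter identity in the convenient form
$R(R(x)\cdot y) + R(x\cdot R(y)) - R(x\cdot y) = R(x)\cdot R(y)$; crucially, no bicharacter factors intervene at this stage because no variable is permuted. Alternatively, one can bypass dendriform algebras altogether and work directly, using
\begin{equation*}
R(x\ast y) = R(x)\cdot R(y) - R(x\cdot R(y)) - \varepsilon(x,y) R(y)\cdot R(x) + \varepsilon(x,y) R(R(y)\cdot x) - \varepsilon(x,y) R(y\cdot x)
\end{equation*}
to unfold $(x\ast y)\ast z$ and compare with $x\ast(y\ast z)$; this is correct but considerably longer.

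The main obstacle is the color bookkeeping in the second step of the factored proof (or throughout the direct proof). The real work is in tracking the bicharacter factors that arise whenever $x,y,z$ change relative positions: one must use $\varepsilon(a,b)\varepsilon(b,a)=1$ and $\varepsilon(a+b,c)=\varepsilon(a,c)\varepsilon(b,c)$, together with the evenness of $R$ (so $\deg R(x)=\deg x$), to confirm that the $\varepsilon$-factors on the two sides of the left-symmetric identity coincide after the formal swap $x\leftrightarrow y$. As the paper itself remarks, the purely algebraic content is identical to the superalgebra case proved in \cite{A}; the only novelty here is the correct insertion of the bicharacter.
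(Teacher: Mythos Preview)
Your proposal is correct. The dendriform factorization works exactly as you describe: with $x\triangleright y=R(x)\cdot y-x\cdot y$ and $x\triangleleft y=x\cdot R(y)$, the three axioms you list reduce (with no bicharacter factors, since no arguments are permuted) to associativity of $\cdot$ together with the weight $-1$ Rota--Baxter identity, and then the $\varepsilon$-commutator $x\triangleright y-\varepsilon(x,y)\,y\triangleleft x$ is left-symmetric by the standard Chapoton--Livernet computation with the obvious $\varepsilon$-decorations.

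As for comparison with the paper: there is essentially nothing to compare against. The paper does not prove Lemma~\ref{rba}; it only states it, under the blanket remark that ``the different results are also proved similarly'' to the superalgebra case in \cite{A}. Your dendriform route is a genuinely different (and more conceptual) packaging than a bare expansion of the associator: it isolates the purely associative, $\varepsilon$-free step (the three dendriform axioms) from the step where the bicharacter enters (passing from dendriform to pre-Lie via the $\varepsilon$-commutator). The direct approach you mention as an alternative --- expanding $(x\ast y)\ast z-x\ast(y\ast z)$ using your displayed formula for $R(x\ast y)$ --- is closer in spirit to what \cite{A} presumably does, and is equally valid but, as you note, longer and less transparent about where the color structure actually matters.
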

\begin{proposition}
 Let $(A, \cdot, \varepsilon, R)$ be a Rota-Baxter associative color algebra of weight $-1$. Then $(A, [-, -], \varepsilon)$ is Lie color
algebra, with respect to
$$[x, y]:=R(x)\cdot y+x\cdot R(y)-x\cdot y-\varepsilon(x, y)\Big(R(y)\cdot x+y\cdot R(x)-y\cdot x\Big).$$
\end{proposition}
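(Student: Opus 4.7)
The plan is to derive this as an immediate corollary of Lemma \ref{rba} followed by Theorem \ref{ghls-ghl}, exactly as the preceding proposition did for the weight-$0$ left-symmetric case. The chain of reasoning is purely formal, so there is no real obstacle beyond bookkeeping of the bicharacter.

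First I would apply Lemma \ref{rba}: because $(A, \cdot, \varepsilon, R)$ is a Rota-Baxter associative color algebra of weight $-1$, the new product
$$x\ast y := R(x)\cdot y - \varepsilon(x,y)\, y\cdot R(x) - x\cdot y$$
equips $A$ with the structure of a left-symmetric color algebra with the same bicharacter $\varepsilon$.

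Next I would invoke Theorem \ref{ghls-ghl}, which guarantees that the $\varepsilon$-commutator of any left-symmetric color algebra is a Lie color algebra. Applied to $\ast$, this yields a Lie color bracket
$$[x, y] = x\ast y - \varepsilon(x, y)\, y \ast x$$
on $A$. Expanding and using the identity $\varepsilon(x, y)\varepsilon(y, x) = 1$ from the bicharacter axioms, the cross term $\varepsilon(x,y)\varepsilon(y,x)\, x\cdot R(y)$ collapses to $x\cdot R(y)$, and after collecting terms one obtains precisely
$$[x, y] = R(x)\cdot y + x\cdot R(y) - x\cdot y - \varepsilon(x, y)\Big(R(y)\cdot x + y\cdot R(x) - y\cdot x\Big),$$
as stated. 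The only step requiring any care is this simplification, and it is completely routine.

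Thus the proof reduces to the two-line composition \textit{Lemma \ref{rba}} $\Rightarrow$ \textit{Theorem \ref{ghls-ghl}} plus a short expansion; no further verification of the $\varepsilon$-Jacobi identity is needed, since it is inherited automatically from the left-symmetric color algebra $(A, \ast, \varepsilon)$.
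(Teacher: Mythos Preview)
Your proof is correct and follows exactly the same approach as the paper, which simply states that the result follows from Theorem \ref{ghls-ghl} and Lemma \ref{rba}. Your added expansion of the $\varepsilon$-commutator to match the displayed bracket is a helpful elaboration beyond what the paper records.
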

\begin{proof}
 The proof follows from Theorem \ref{ghls-ghl} and Lemma \ref{rba}.
\end{proof}
\begin{definition} \cite{B2}
 A color algebra  $(A, \cdot, \varepsilon)$ is said to be a Lie  admissible color algebra if, for any hogeneous elements
$x, y\in A$, the bracket $[-, -]$ defined
by $$[x, y]=x\cdot y-\varepsilon(x, y)y\cdot x$$
satisfies the $\varepsilon$-Jacobi identity.
\end{definition}
\begin{example}
 Any associative color algebra and any Lie color  algebra are Lie admissible color algebras \cite{B2}.
\end{example}
\begin{lemma}\label{rbad}
 Let $(A, \cdot, \varepsilon, R)$ be a Rota-Baxter Lie admissible color algebra of weight $0$. Then 
 $(A, \ast, \varepsilon)$ is a Rota-Baxter left-symmetric color algebra with respect to
$$x\ast y:=R(x)\cdot y-\varepsilon(x, y)y\cdot R(x),$$
for any $x, y\in \mathcal{H}(A)$.
\end{lemma}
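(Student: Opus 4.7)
The plan is to reduce everything to the commutator bracket and then invoke Lemma~\ref{hde}. The crucial observation is that since $R$ is even, $\varepsilon(R(x),y)=\varepsilon(x,y)$, so
$$x\ast y \;=\; R(x)\cdot y-\varepsilon(x,y)\,y\cdot R(x) \;=\; [R(x),y],$$
where $[-,-]$ denotes the $\varepsilon$-commutator of $(A,\cdot)$. Thus the whole statement will follow once we equip $A$ with a Rota-Baxter Lie color algebra structure coming from $R$ and the commutator bracket, because Lemma~\ref{hde} then turns $(A,\ast,\varepsilon)$ into a left-symmetric color algebra.

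The first step is to note that by the Lie admissibility hypothesis, $(A,[-,-],\varepsilon)$ is a Lie color algebra. The second step is to transfer the Rota-Baxter identity from $(A,\cdot)$ to $(A,[-,-])$ with weight zero. This is a direct computation: expand
$$[R(x),R(y)] \;=\; R(x)\cdot R(y)-\varepsilon(x,y)R(y)\cdot R(x),$$
apply the weight-zero Rota-Baxter identity (\ref{rbi}) to each product, regroup the four terms in pairs and use $\varepsilon(R(x),y)=\varepsilon(x,y)=\varepsilon(x,R(y))$ to recognise the commutators $[R(x),y]$ and $[x,R(y)]$, yielding
$$[R(x),R(y)] \;=\; R\!\bigl([R(x),y]+[x,R(y)]\bigr).$$
Once this is in hand, Lemma~\ref{hde} applied to $(A,[-,-],\varepsilon,R)$ gives that $(A,\ast,\varepsilon)$ with $x\ast y=[R(x),y]$ is a left-symmetric color algebra, which matches the defining formula for $\ast$ by the rewriting above.

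It remains to check that $R$ itself is Rota-Baxter of weight $0$ with respect to $\ast$. Here the trick is again to read $\ast$ as a commutator: $R(x)\ast R(y)=[R(R(x)),R(y)]$, and applying the Rota-Baxter identity for $[-,-]$ (which we just established) with arguments $R(x)$ and $y$ gives
$$[R(R(x)),R(y)] \;=\; R\!\bigl([R(R(x)),y]+[R(x),R(y)]\bigr) \;=\; R\!\bigl(R(x)\ast y+x\ast R(y)\bigr),$$
which is exactly (\ref{rbi}) for $\ast$.

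The only non-routine step is the Rota-Baxter transfer in step two, and even that is a formal rearrangement once the bicharacter evenness is used carefully. The Lie admissibility hypothesis is never needed for this second step; it is used solely to justify passing to the commutator bracket before invoking Lemma~\ref{hde}.
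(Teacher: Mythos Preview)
Your argument is correct. The reduction $x\ast y=[R(x),y]$ via the $\varepsilon$-commutator is clean, the transfer of the weight-$0$ Rota--Baxter identity from $(A,\cdot)$ to $(A,[-,-])$ is the right computation, and the invocation of Lemma~\ref{hde} then gives the left-symmetric structure immediately; your final check that $R$ remains Rota--Baxter for $\ast$ is also valid.

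As for comparison with the paper: there is no proof to compare against. The paper states Lemma~\ref{rbad} without argument, under the blanket remark that the results in this block ``extend to color case some results on superalgebras~\cite{A}'' and ``are also proved similarly.'' Your approach---passing to the commutator Lie color algebra and quoting Lemma~\ref{hde}---is in fact more internal to the paper than a direct verification would be, since it reuses an already-cited lemma rather than redoing the left-symmetric associator computation from scratch. The only cost is that a reader must trust Lemma~\ref{hde} (itself quoted from~\cite{B2}); a fully self-contained proof would expand the associator $(x\ast y)\ast z-x\ast(y\ast z)$ directly, but your route is shorter and perfectly adequate here.
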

It is known that every Rota-Baxter associative color algebra is color Lie admissible. Then we have the following proposition.
\begin{proposition}
 Let $(A, \cdot, \varepsilon, R)$ be a Rota-Baxter associative color algebra of weight $0$. Equipped with the bracket
$$[x, y]:=R(x)\cdot y+x\cdot R(y)-\varepsilon(x, y)\Big(R(y)\cdot x+y\cdot R(x)+y\cdot x\Big),$$
 $A$ becomes a Lie color algebra.
\end{proposition}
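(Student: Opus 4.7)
The plan is to derive the statement as an immediate consequence of two results already established in the excerpt. The key observation, flagged by the author in the sentence preceding the proposition, is that every Rota-Baxter associative color algebra is in particular a Rota-Baxter Lie admissible color algebra, since any associative color algebra is Lie admissible (Example following the definition of Lie admissible color algebra). Hence the hypotheses of Lemma~\ref{rbad} are satisfied at weight $0$, with the associative product serving as the underlying Lie admissible operation.

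First I would invoke Lemma~\ref{rbad} to produce the auxiliary left-symmetric color algebra $(A, \ast, \varepsilon, R)$ with product
$$x \ast y := R(x)\cdot y - \varepsilon(x,y)\, y\cdot R(x).$$
Since this is automatically Rota-Baxter left-symmetric of weight $0$, Theorem~\ref{ghls-ghl} then applies, and the $\varepsilon$-commutator
$$\{x,y\} := x \ast y - \varepsilon(x,y)\, y \ast x$$
endows $A$ with a Lie color algebra structure, i.e.\ $\varepsilon$-skew-symmetry and the $\varepsilon$-Jacobi identity are inherited for free from the cited theorem.

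The final step is to expand $\{x,y\}$ back in terms of the original associative product $\cdot$ and identify it with the formula displayed in the statement. Since $R$ is even, $R(y)$ has the same degree as $y$, so all bicharacter evaluations go through unchanged; the only manipulation required is to bring $\varepsilon(x,y)$ past the second occurrence of $\ast$ using the identity $\varepsilon(x,y)\varepsilon(y,x) = 1$, and then collect like terms of the forms $R(x)\cdot y$, $x\cdot R(y)$, $R(y)\cdot x$, $y\cdot R(x)$.

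No substantive obstacle is expected: all the hard work (verifying left-symmetry and then the $\varepsilon$-Jacobi identity) has already been absorbed by Lemma~\ref{rbad} and Theorem~\ref{ghls-ghl}. The only care needed is the sign bookkeeping in the expansion; this is the same calculation as in the weight $-1$ proposition immediately above, specialized to the simpler left-symmetric product from Lemma~\ref{rbad}, and is routine.
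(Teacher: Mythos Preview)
Your approach is exactly the one the paper uses: the proof in the paper reads in full ``The proof follows from Theorem~\ref{ghls-ghl} and Lemma~\ref{rbad},'' which is precisely your plan of passing through the left-symmetric product $x\ast y=R(x)\cdot y-\varepsilon(x,y)\,y\cdot R(x)$ and then taking its $\varepsilon$-commutator.

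One caveat worth recording: when you actually carry out the ``routine'' expansion in your last step you will obtain
\[
x\ast y-\varepsilon(x,y)\,y\ast x
=R(x)\cdot y+x\cdot R(y)-\varepsilon(x,y)\bigl(R(y)\cdot x+y\cdot R(x)\bigr),
\]
which does \emph{not} contain the extra $+\,y\cdot x$ term displayed inside the parentheses in the statement. That stray term makes the displayed bracket fail $\varepsilon$-skew-symmetry, so it is almost certainly a typo in the paper rather than a flaw in your argument; the approach you outline (and the paper's own proof) yields the corrected formula above.
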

\begin{proof}
 The proof follows from Theorem \ref{ghls-ghl} and Lemma \ref{rbad}.
\end{proof}
\begin{definition}\label{hplad}\cite{B2}
 A post-Lie color algebra $(L, [-, -], \cdot, \varepsilon)$ is a Lie color  algebra  $(L, [-, -], \varepsilon)$ 
together with an even bilinear map $\cdot : L\otimes L\rightarrow L$ such that
\begin{eqnarray}
 z\cdot [x, y]-[z\cdot x, y]-\varepsilon(z, x)[x, z\cdot y]=0,\qquad\qquad\qquad\qquad \label{pl4}\\
z\cdot(y\cdot x)-\varepsilon(z, y)y\cdot(z\cdot x)+\varepsilon(z, y)(y\cdot z)\cdot x
-(z\cdot y)\cdot x+\varepsilon(z, y)[y, z]\cdot x=0,\label{pl3}
\end{eqnarray}
for any $x, y, z\in\mathcal{H}(L)$.
\end{definition}
 \begin{lemma}\label{chla}\cite{B2}
 Let $(L, [-, -], \cdot, \varepsilon)$ be a post-Lie color algebra. Define the even bilinear product $\ast : L\times L\rightarrow L$ as 
\begin{eqnarray}
 x\ast y:=x\cdot y+\frac{1}{2}[x, y].\nonumber
\end{eqnarray}
Then $(L, \ast, \varepsilon)$ is a Lie  admissible color algebra.
\end{lemma}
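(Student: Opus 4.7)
The plan is to rewrite $[-,-]_\ast$ as the sum of the original Lie color bracket $[-,-]$ and the $\varepsilon$-commutator of $\cdot$, and then to verify the $\varepsilon$-Jacobi identity for $[-,-]_\ast$ by splitting the Jacobiator into three classes of contributions.

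First, using the $\varepsilon$-skew-symmetry of $[-,-]$ and the bicharacter identity $\varepsilon(x,y)\varepsilon(y,x)=1$, a short direct computation yields
\begin{equation*}
[x,y]_\ast := x\ast y - \varepsilon(x,y)\,y\ast x = \{x,y\} + [x,y], \qquad \{x,y\}:=x\cdot y - \varepsilon(x,y)\,y\cdot x.
\end{equation*}
Thus $[-,-]_\ast$ is automatically $\varepsilon$-skew-symmetric, and Lie admissibility reduces to verifying the $\varepsilon$-Jacobi identity for $[-,-]_\ast$. Substituting $[x,y]_\ast=\{x,y\}+[x,y]$ into the $\varepsilon$-Jacobiator
\begin{equation*}
J_\ast(x,y,z) := \varepsilon(z,x)[x,[y,z]_\ast]_\ast + \varepsilon(x,y)[y,[z,x]_\ast]_\ast + \varepsilon(y,z)[z,[x,y]_\ast]_\ast
\end{equation*}
and fully expanding splits it into terms of three shapes: purely $[-,-]$-nested, mixed (one factor of $\cdot$ and one of $[-,-]$), and purely $\cdot$-nested.

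The purely $[-,-]$ piece is $\varepsilon(z,x)[x,[y,z]]+\varepsilon(x,y)[y,[z,x]]+\varepsilon(y,z)[z,[x,y]]$, which vanishes by the $\varepsilon$-Jacobi identity assumed for the Lie color bracket of $L$. For the mixed terms of the form $[x,\{y,z\}]+\{x,[y,z]\}$ and their cyclic rotations, the derivation axiom (\ref{pl4}) rewrites each $a\cdot[b,c]$ as $[a\cdot b,c]+\varepsilon(a,b)[b,a\cdot c]$; using the bicharacter identities $\varepsilon(a,b+c)=\varepsilon(a,b)\varepsilon(a,c)$ and $\varepsilon(a,b)\varepsilon(b,a)=1$ together with the $\varepsilon$-skew-symmetry of $[-,-]$, one checks that the rewritten terms cancel in pairs across the three cyclic positions. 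Finally, for the purely $\cdot$ piece $\varepsilon(z,x)\{x,\{y,z\}\}+\text{cyc.}$, I would invoke (\ref{pl3}) to rewrite every $a\cdot(b\cdot c)-\varepsilon(a,b)\,b\cdot(a\cdot c)$ as $\{a,b\}\cdot c-\varepsilon(a,b)[b,a]\cdot c$, and then observe that the resulting $[-,-]\cdot(\,\cdot\,)$ contributions reassemble under the cyclic $\varepsilon$-sum into the $\varepsilon$-Jacobiator of $[-,-]$ acting by right multiplication, which again vanishes by hypothesis.

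The genuine obstacle is the $\varepsilon$-bookkeeping in the purely $\cdot$ piece: the color signs produced by the cyclic rotation have to align exactly with the $\varepsilon(z,y)$'s built into (\ref{pl3}) so that, after extracting a common right factor, the residual cyclic sum is precisely the $\varepsilon$-Jacobiator of $[-,-]$ applied to $x,y,z$. This alignment is purely formal and follows from repeated use of $\varepsilon(a,b+c)=\varepsilon(a,b)\varepsilon(a,c)$ and $\varepsilon$-skew-symmetry, but it is where all the bookkeeping lives. Once this is carried through, each of the three families vanishes individually, giving $J_\ast\equiv 0$ and hence the Lie admissibility of $(L,\ast,\varepsilon)$.
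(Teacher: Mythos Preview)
The paper does not give its own proof of this lemma; it is quoted from the reference \cite{B2}. So there is nothing in the paper to compare your argument against directly, and what matters is whether your argument is internally correct.

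Your overall strategy---write $[x,y]_\ast=\{x,y\}+[x,y]$ and split the $\varepsilon$-Jacobiator of $[-,-]_\ast$ into pure-$[\,,\,]$, mixed, and pure-$\{\,,\,\}$ contributions---is the standard and correct one, and your computation of $[x,y]_\ast$ is right. The gap is in how you claim the last two blocks vanish. In the mixed block, applying (\ref{pl4}) to $a\cdot[b,c]$ does make all terms of the shape $[a,b\cdot c]$ cancel cyclically in pairs, but the terms of the shape $[b,c]\cdot a$ coming from $\{a,[b,c]\}=a\cdot[b,c]-\varepsilon(a,b+c)[b,c]\cdot a$ survive: one is left with a nonzero cyclic sum $-\varepsilon(x,y)[y,z]\cdot x-\varepsilon(y,z)[z,x]\cdot y-\varepsilon(z,x)[x,y]\cdot z$. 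Likewise, in the pure-$\{\,,\,\}$ block, rewriting via (\ref{pl3}) produces residual terms of the very same shape $[a,b]\cdot c$; these are \emph{not} the $\varepsilon$-Jacobiator of $[-,-]$ acting by right multiplication (that would involve nested brackets $[[a,b],c]$, not products $[a,b]\cdot c$), and they do not vanish by the $\varepsilon$-Jacobi identity for $[-,-]$.

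What actually happens is that the $[a,b]\cdot c$ residuals from the mixed block cancel exactly against the $[a,b]\cdot c$ residuals produced by (\ref{pl3}) in the pure-$\{\,,\,\}$ block. So neither block vanishes individually; they annihilate each other. Once you reroute the argument this way the proof goes through, but as written your justification for the last two vanishings is incorrect.
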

According to Lemma \ref{chla} and Theorem \ref{ghls-ghl}, we have :
\begin{proposition} \label{rbads}
 Let $(L, [-, -], \cdot, \varepsilon, R)$ be a Rota-Baxter post-Lie color algebra. Then $A$ is a left-symmetric color algebra with respect to
\begin{eqnarray}
 x\circ y:= R(x)\ast y-\varepsilon(x, y)y\ast R(x)=[R(x), y]+\{R(x), y\},\nonumber
\end{eqnarray}
where $\{-, -\}$ is the $\varepsilon$-commutator bracket.
\end{proposition}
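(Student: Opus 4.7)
The plan is to derive the left-symmetric identity by composing Lemma \ref{chla} with Lemma \ref{rbad}: the post-Lie data is first collapsed into a Lie admissible product $\ast$, and the Rota-Baxter twist on this Lie admissible product is then read off as the $\varepsilon$-commutator with $R(x)$. This is consistent with the announcement preceding the proposition, but Lemma \ref{rbad} supplies the final left-symmetric step that Theorem \ref{ghls-ghl} alone would not give.

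First, by Lemma \ref{chla}, the operation $x\ast y:=x\cdot y+\frac{1}{2}[x,y]$ turns $(L,\ast,\varepsilon)$ into a Lie admissible color algebra. Next, I would verify that $R$ remains a Rota-Baxter operator of weight $0$ on $(L,\ast,\varepsilon)$. Reading the hypothesis ``Rota-Baxter post-Lie'' as requiring $R$ to satisfy the weight-$0$ Rota-Baxter identity with respect to both $[-,-]$ and $\cdot$, linearity gives
$$R(x)\ast R(y)=R(x)\cdot R(y)+\tfrac{1}{2}[R(x),R(y)]=R\bigl(R(x)\cdot y+x\cdot R(y)+\tfrac{1}{2}[R(x),y]+\tfrac{1}{2}[x,R(y)]\bigr)=R\bigl(R(x)\ast y+x\ast R(y)\bigr).$$
So we may apply Lemma \ref{rbad} to the Rota-Baxter Lie admissible color algebra $(L,\ast,\varepsilon,R)$, which yields a left-symmetric color product $x\circ y:=R(x)\ast y-\varepsilon(x,y)\,y\ast R(x)$ on $L$.

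It then remains to reconcile this formula with the second expression in the statement. Expanding $\ast$ and using that $R$ is even, so that $[y,R(x)]=-\varepsilon(y,x)[R(x),y]$ and $\varepsilon(x,y)\varepsilon(y,x)=1$, the two half-brackets add up to a full $[R(x),y]$, leaving $x\circ y= R(x)\cdot y-\varepsilon(x,y)\,y\cdot R(x)+[R(x),y]=\{R(x),y\}+[R(x),y]$, which matches the claim.

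The main obstacle I expect is the verification just above, i.e.\ the ambiguity in Step 2: the paper does not explicitly define a ``Rota-Baxter post-Lie color algebra'', and one must be sure that the Rota-Baxter identity is meant with respect to both $[-,-]$ and $\cdot$ separately (each of weight $0$), so that it lifts to $\ast$. Once that convention is fixed, everything else is formal bookkeeping inside Lemmas \ref{chla} and \ref{rbad}.
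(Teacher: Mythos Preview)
Your proposal is correct and follows essentially the same two-step route the paper sketches before the statement: first use Lemma~\ref{chla} to pass from the post-Lie data to the Lie admissible product $\ast$, then apply the Rota-Baxter--on--Lie-admissible result to obtain the left-symmetric product $x\circ y=R(x)\ast y-\varepsilon(x,y)\,y\ast R(x)$. Your choice of Lemma~\ref{rbad} for the second step is in fact the right reference (the paper's pointer to Theorem~\ref{ghls-ghl} does not provide the needed implication), and your added checks that $R$ is Rota-Baxter on $\ast$ and that the formula rewrites as $[R(x),y]+\{R(x),y\}$ are exactly the details the paper leaves implicit.
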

\begin{proposition}
 Let $(L, [-, -], \cdot, \varepsilon)$ be a Rota-Baxter post-Lie color algebra. Then $(A, \{-, -\}, \varepsilon)$ is a Lie color algebra, where
$$\{x, y\}:=[R(x), y]+R(x)\cdot y+ x\cdot R(y)-\varepsilon(x, y)\Big([R(y), x]+R(y)\cdot x+ y\cdot R(x)\Big).$$
\end{proposition}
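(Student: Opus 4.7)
The plan is to derive this result as a direct consequence of the two preceding results: Proposition~\ref{rbads} (which gives a left-symmetric color algebra structure from a Rota-Baxter post-Lie color algebra) and Theorem~\ref{ghls-ghl} (which passes from a left-symmetric color algebra to a Lie color algebra via the $\varepsilon$-commutator). The key observation is that the bracket $\{-,-\}$ displayed in the statement is precisely the $\varepsilon$-commutator of the left-symmetric product $\circ$ supplied by Proposition~\ref{rbads}.

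First, I would invoke Proposition~\ref{rbads} to assert that $(A,\circ,\varepsilon)$ is a left-symmetric color algebra, where
\[
x\circ y = [R(x),y] + R(x)\cdot y - \varepsilon(x,y)\, y\cdot R(x).
\]
Next, I would apply Theorem~\ref{ghls-ghl} to this left-symmetric color algebra: the $\varepsilon$-commutator
\[
\langle x,y\rangle := x\circ y - \varepsilon(x,y)\, y\circ x
\]
automatically endows $A$ with a Lie color algebra structure.

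The remaining task is a short bookkeeping computation to show $\langle x,y\rangle = \{x,y\}$. Expanding $y\circ x$, multiplying by $\varepsilon(x,y)$, and using the bicharacter identities $\varepsilon(x,y)\varepsilon(y,x)=1$ and $\varepsilon(x,y)\cdot\varepsilon(y,x)\,x\cdot R(y)=x\cdot R(y)$, the term $-\varepsilon(x,y)\bigl(-\varepsilon(y,x)\, x\cdot R(y)\bigr)$ collapses to $+x\cdot R(y)$, while $-\varepsilon(x,y)y\cdot R(x)$ reappears as a contribution to the second parenthesized group. Collecting the terms yields exactly
\[
\langle x,y\rangle = [R(x),y] + R(x)\cdot y + x\cdot R(y) - \varepsilon(x,y)\Big([R(y),x] + R(y)\cdot x + y\cdot R(x)\Big),
\]
which is the bracket $\{x,y\}$ of the proposition.

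The only potentially delicate step is this last verification, because the bicharacter signs must be tracked carefully when distributing $\varepsilon(x,y)$ over $y\circ x$; however, no new identities are required beyond those built into the bicharacter. Hence the proof is essentially one line: combine Proposition~\ref{rbads} with Theorem~\ref{ghls-ghl} and recognize $\{-,-\}$ as the resulting $\varepsilon$-commutator.
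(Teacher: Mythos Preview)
Your proposal is correct and follows exactly the same route as the paper's proof, which simply reads ``The proof follows from Theorem~\ref{ghls-ghl} and Proposition~\ref{rbads}.'' You have merely supplied the explicit bookkeeping verifying that the $\varepsilon$-commutator of $\circ$ coincides with $\{-,-\}$, which the paper leaves implicit.
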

\begin{proof}
 The proof follows from Theorem \ref{ghls-ghl} and Proposition \ref{rbads}.
\end{proof}

Let us continue to give other examples of Leibniz color algebras.
\begin{example}\cite{BD}
 A left-symmetric color dialgebra is a $G$-graded vector space $S$ equipped with a bicharacter $\varepsilon : G\otimes G\rightarrow \mathbb{K}^*$
 on $G$ and two even bilinear products $\dashv : S\times S\rightarrow S$ and $\vdash : S\times S\rightarrow S$ satisfying the identities
\begin{eqnarray}
 x\dashv(y\dashv z)&=&x\dashv(y\vdash z),\label{als1}\nonumber\\
(x\vdash y)\vdash z&=&(x\dashv y)\vdash z,\label{als2}\nonumber\\
x\dashv(y\dashv z)-(x\dashv y)\dashv z&=&\varepsilon(x, y)\Big(
y\vdash(x\dashv z)-(y\vdash x)\dashv z\Big),\nonumber\label{als3}\\
x\vdash(y\vdash z)-(x\vdash y)\vdash z&=&\varepsilon(x, y)\Big(
y\vdash(x\vdash z)-(y\vdash x)\vdash z\Big),\nonumber\label{als4}
\end{eqnarray}
for all $x, y, z\in \mathcal{H}(S)$.
% \begin{theorem}
%  Let $(S, \dashv, \vdash, \varepsilon, \alpha)$ be a Hom-left-symmetric color dialgebra.
 Then the commutator bracket defined by
\begin{eqnarray}
 [x, y]:=x\dashv y-\varepsilon(x, y)y\vdash x,\nonumber
\end{eqnarray}
defines a structure of Leibniz color algebra on $S$.
% \end{theorem}
\end{example}
\begin{example}\label{H}\cite{BD}
 An associative color dialgebra is a quadruple $(D, \dashv, \vdash, \varepsilon)$, where $D$ is a $G$-graded vector space, 
$\dashv, \vdash : D\otimes D\rightarrow D$ are even bilinear maps and $\varepsilon : G\otimes G\rightarrow \mathbb{K}^*$ is a bicharacter
 such that the following  five axioms 
 \begin{eqnarray}
(x\vdash y)\dashv z&\stackrel{}{=}&x\vdash(y\dashv z), \nonumber\\
x\dashv (y\dashv z)&\stackrel{}{=}&(x\dashv y)\dashv z\stackrel{}{=}x \dashv(y\vdash z),\nonumber\\
(x\vdash y)\vdash z&\stackrel{}{=}&x\vdash(y\vdash z)\stackrel{}{=}(x\dashv y)\vdash z,\nonumber
 \end{eqnarray}
are satisfied for $x, y, z\in \mathcal{H}(D)$.
Define the even bilinear map $[-, -] : D\otimes D\rightarrow D$
 by setting
\begin{eqnarray}
 [x, y]:=x\dashv y-\varepsilon(x, y) y\vdash x.\nonumber
\end{eqnarray}
Then $(D, [-, -], \varepsilon)$ is a Leibniz color algebra.
\end{example}
\begin{example}
 Let $(A, \cdot, \varepsilon)$ be an associative color algebra and $\alpha : A\rightarrow A$ be an averaging operator. Define new operations on 
$A$ by 
$$x\dashv y:=x\cdot\alpha(y)\quad\mbox{and}\quad x\vdash y:=\alpha(x)\cdot y.$$
Then $(A, \dashv, \vdash, \varepsilon)$ is an associative color dialgebra. And from Example \ref{H}, $A$ is a Leibniz color algebra with respect
to the bracket $[x, y]:=x\cdot\alpha(y)-\varepsilon(x, y)\alpha(y)\cdot x$.
\end{example}

\section{Ternary Leibniz color algebras and  associative color trialgebras}
This section is devoted to the construction of some ternary Leibniz color algebras.
\subsection{Ternary Leibniz color algebras}
\begin{definition}
 A ternary Leibniz color algebra is a $G$-graded vector space $A$ over a field $\mathbb{K}$ equipped with a bicharacter 
$\varepsilon : G\otimes G\rightarrow \mathbb{K}^*$ and an even trilinear operation $[-, -, -] : A\otimes A\otimes A\rightarrow A$ (i.e.
$[x, y, z]\subseteq  A_{x+y+z}$ whenever  $x, y, z\in \mathcal{H}(A)$) satisfying the following  {\it ternary $\varepsilon$-Nambu identity} :
\begin{eqnarray}
 [[x, y, z], t, u]=[x, y, [z, t, u]]+\varepsilon(z, t+u)[x, [y, t, u], z]+\varepsilon(y+z, t+u)[[x, t, u], y, z]\label{lci}
\end{eqnarray}
for any $x, y, z, t, u\in \mathcal{H}(A)$.\\
If the trilinear map $[-, -, -]$ is $\varepsilon$-skew-symmetric for any pair of variables, then $(A, [-, -, -], \varepsilon)$ is 
said to be a ternary Lie color algebra \cite{TZ}.
\end{definition}
% \begin{proposition}
%  Let $(A, \cdot, \varepsilon)$ be an associative color algebra endowed with an averaging operator $\alpha : A\rightarrow A$. If we 
% define the bracket 
% $$[x, y, z]:=x\cdot(y\cdot \alpha(z)-\varepsilon(y, z)\alpha(z)\cdot y)-\varepsilon(x, y+z)(y\cdot\alpha(z)
% -\varepsilon(y, z)\alpha(z)\cdot y)\cdot x$$
% Then we have a Leibniz color $3$-algebra.
% \end{proposition}
It is proved in (\cite{JM}, Proposition 3.2) that any Leibniz algebra is also a Leibniz $n$-algebra. We prove the analog for graded case and $n=3$.
That is one can get ternary Leibniz color algebras from Leibniz color algebras.
\begin{theorem}\label{ll3}
Let $(L, [-, -], \varepsilon)$ be a Leibniz color algebra. Then $L$ is a ternary Leibniz color algebra with respect to the bracket
$$[x, y, z]:=[x, [y, z]],$$
for  any $x, y, z\in \mathcal{H}(L)$.
\end{theorem}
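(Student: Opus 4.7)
The plan is to verify the ternary $\varepsilon$-Nambu identity \eqref{lci} by expanding both sides directly using the definition $[x,y,z] := [x,[y,z]]$ and applying the binary Leibniz color identity \eqref{cpa} twice, in a carefully chosen order. The calculation is purely mechanical once the expansion pattern is fixed; the only conceptual point is to match each resulting binary expression with a ternary bracket on the right-hand side.

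First I would rewrite the left-hand side in purely binary form:
\begin{eqnarray*}
[[x,y,z],t,u] \;=\; [[x,y,z],[t,u]] \;=\; [[x,[y,z]],[t,u]].
\end{eqnarray*}
Then I would apply \eqref{cpa} with the substitution $a=x$, $b=[y,z]$, $c=[t,u]$ to obtain
\begin{eqnarray*}
[[x,[y,z]],[t,u]] \;=\; [x,[[y,z],[t,u]]] + \varepsilon(y+z,\,t+u)\,[[x,[t,u]],[y,z]].
\end{eqnarray*}
The second summand is already the last term on the right-hand side of \eqref{lci}, since $[[x,[t,u]],[y,z]] = [[x,t,u],y,z]$ by the definition of the ternary bracket.

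Next I would expand the inner bracket $[[y,z],[t,u]]$ appearing in the first summand by another application of \eqref{cpa}, this time with $a=y$, $b=z$, $c=[t,u]$:
\begin{eqnarray*}
[[y,z],[t,u]] \;=\; [y,[z,[t,u]]] + \varepsilon(z,\,t+u)\,[[y,[t,u]],z].
\end{eqnarray*}
Substituting this back and using bilinearity of $[-,-]$ yields
\begin{eqnarray*}
[x,[[y,z],[t,u]]] \;=\; [x,[y,[z,[t,u]]]] + \varepsilon(z,\,t+u)\,[x,[[y,[t,u]],z]],
\end{eqnarray*}
which, after identifying $[x,[y,[z,[t,u]]]] = [x,y,[z,t,u]]$ and $[x,[[y,[t,u]],z]] = [x,[y,t,u],z]$, produces exactly the first two summands of the right-hand side of \eqref{lci}.

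I do not anticipate a real obstacle here: the argument is a careful bookkeeping of two Leibniz expansions, and the bicharacter factors line up automatically because each bracket $[a,b]$ is homogeneous of degree $a+b$, so $\varepsilon([y,z],[t,u]) = \varepsilon(y+z,t+u)$ and $\varepsilon(z,[t,u]) = \varepsilon(z,t+u)$. The only thing to be attentive to is the order of the two expansions (outer Leibniz first, then inner), since expanding in the wrong order would produce cross terms that do not obviously regroup into ternary brackets.
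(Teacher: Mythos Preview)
Your proof is correct and follows essentially the same route as the paper: both apply the binary Leibniz identity \eqref{cpa} first to the outer bracket $[[x,[y,z]],[t,u]]$ with $a=x$, $b=[y,z]$, $c=[t,u]$, and then to the inner bracket $[[y,z],[t,u]]$, before identifying each term with the corresponding ternary bracket. Your discussion of why the bicharacter factors line up and your remark on the order of expansions are a nice addition, but the underlying computation is identical to the paper's.
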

\begin{proof}
 Applying twice relation (\ref{cpa}), for any $x, y, z\in \mathcal{H}(L)$, we have
\begin{eqnarray}
 [[x, y, z], t, u]&=&[[x, [y, z]], [t, u]]\nonumber\\
&=&[x,  [[y, z], [t, u]]]+\varepsilon(y+z, t+u)[[x, [t, u]]], [y, z]]\nonumber\\
&=&[x, [y, [z, [t, u]]]]+\varepsilon(z, t+u)[x, [[y, [t, u]], z]]+\varepsilon(y+z, t+u)[[x, [t, u]]], [y, z]]\nonumber\\
&=&[x, y, [z, t, u]]+\varepsilon(z, t+u)[x, [y, t, u], z]+\varepsilon(y+z, t+u)[[x, t, u], y, z].\nonumber
\end{eqnarray}
This completes the proof.
\end{proof}
\begin{example}
 Examples (\ref{a3c})-(\ref{ed}) provide various examples of ternary Leibniz color algebras.
\end{example}

\begin{remark}
 A derivation of $(A, [-, -], \varepsilon)$ is also a derivation of $(A, [-, [-, -]], \varepsilon)$.
\end{remark}

% \begin{corollary}
%  Whenever $[-, -]$ is $\varepsilon$-skew-symmetric, then $(A, [-, -, -], \varepsilon)$ is a Lie color $3$-algebra.
% \end{corollary}
Here, we recall the definition of Leibniz-Poisson color algebras \cite{BD} and give some examples,
 which will also give examples for Theorem \ref{ll}.

\begin{definition} 
 A non-commutative Leibniz-Poisson color algebra is a $G$-graded vector space $P$ together with two even bilinear maps 
$[-, -] : P\otimes P\rightarrow P$ and $\cdot : P\otimes P\rightarrow P$ and a bicharacter $\varepsilon : G\otimes G\rightarrow \mathbb{K}^*$
  such that
\begin{enumerate}
\item [1)] $(P, \cdot, \varepsilon)$ is an associative color algebra,
 \item [2)]  $(P, [-, -], \varepsilon)$ is a Leibniz color algebra,
\item [3)] and the following {\it right Leibniz} identity :
 \begin{eqnarray}
  [x\cdot y, z]=x\cdot [y, z]+\varepsilon(y, z) [x, z]\cdot y \label{comp}
 \end{eqnarray}
holds, for all $x, y, z\in \mathcal{H}(P)$.
\end{enumerate}
\end{definition}
\begin{remark}
1) When the color associative product $\cdot$ is $\varepsilon$-commutative i.e. $x\cdot y=\varepsilon(x, y)y\cdot x$, 
then $(P, \cdot, [-, -], \varepsilon)$ is said to be
a commutative Leibniz-Poisson color algebra.\\
2) A non-commutative Leibniz-Poisson algebra $(P, \cdot, [-, -], \varepsilon)$ in which the bracket $[-, -]$ is 
$\varepsilon$-skew symmetric, is called a non-commutative Poisson color algebra.\\
3) Whenever the color associative product $\cdot$ is $\varepsilon$-commutative and the bracket $[-, -]$ is $\varepsilon$-skew-symmetric,
then $(P, \cdot, [-, -], \varepsilon)$ is named a commutative Poisson color algebra.
\end{remark}
% Let us give some examples.
\begin{example}\label{laa1}
Let $(D, \dashv, \vdash, \varepsilon)$ be an associative color dialgebra. Then $(D, \dashv, [-, -], \varepsilon)$ is a non-commutative
 Leibniz-Poisson color algebra with the bracket
\begin{eqnarray}
 [x, y]:=x\dashv y-\varepsilon(x, y) y\vdash x,\nonumber
\end{eqnarray}
for all $x, y\in \mathcal{H}(D)$.
\end{example}

\begin{proposition}\label{laa2}\cite{B}
 Let $(A, \cdot, \varepsilon)$ be an associative color algebra.
 Then $(A, \cdot, [-, -]=\cdot-\varepsilon\;\cdot^{op}, \varepsilon)$ is a non-commutative Leibniz-Poisson  color algebra , where
$x\cdot^{op} y=y\cdot x$, for any $x, y\in \mathcal{H}(A)$.
\end{proposition}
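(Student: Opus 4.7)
My plan is to check the three defining axioms of a non-commutative Leibniz-Poisson color algebra in turn, then reduce the bulk of the work to a single direct bicharacter-bookkeeping computation.

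First, axiom (1) is immediate because associativity of $(A,\cdot,\varepsilon)$ is the hypothesis. Second, for axiom (2) I would invoke Example \ref{a3c}: the $\varepsilon$-commutator $[x,y]=x\cdot y-\varepsilon(x,y)y\cdot x$ already turns $A$ into a Lie color algebra, and the remark placed just after the definition of Leibniz color algebra observes that every Lie color algebra is a Leibniz color algebra (the bracket is $\varepsilon$-skew-symmetric, so the $\varepsilon$-Leibniz identity (\ref{cpa}) follows from the $\varepsilon$-Jacobi identity). So nothing new needs to be proved for (2).

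The real content is axiom (3), the right $\varepsilon$-Leibniz compatibility (\ref{comp}). I would expand both sides in terms of $\cdot$ using only the definition of $[-,-]$ and the bicharacter multiplicativity $\varepsilon(x+y,z)=\varepsilon(x,z)\varepsilon(y,z)$. On the left,
\[
[x\cdot y,z]=(x\cdot y)\cdot z-\varepsilon(x,z)\varepsilon(y,z)\, z\cdot(x\cdot y).
\]
On the right,
\[
x\cdot[y,z]+\varepsilon(y,z)[x,z]\cdot y=x\cdot y\cdot z-\varepsilon(y,z)\,x\cdot z\cdot y+\varepsilon(y,z)\,x\cdot z\cdot y-\varepsilon(x,z)\varepsilon(y,z)\,z\cdot x\cdot y.
\]
The two middle terms cancel, and associativity of $\cdot$ identifies what remains with the left-hand side. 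This is a one-line verification once the bookkeeping is done carefully.

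The main (minor) obstacle is purely notational: making sure the scalar $\varepsilon(x+y,z)$ that appears in the commutator $[x\cdot y,z]$ is correctly split as $\varepsilon(x,z)\varepsilon(y,z)$ so that the cross-terms on the right-hand side line up and cancel. No further structural input — beyond associativity and the bicharacter axioms already stated — is needed, which is why the author's proof will be a pointer to Example \ref{a3c} plus this short computation.
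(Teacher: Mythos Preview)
Your argument is correct. Note, however, that the paper does not actually supply its own proof of this proposition: it is stated with a citation to \cite{B}, so there is nothing to compare against beyond the expectation that the cited reference contains essentially the same direct verification you have given --- invoke Example~\ref{a3c} for the Lie (hence Leibniz) color structure of the $\varepsilon$-commutator, then expand identity~(\ref{comp}) using associativity and the bicharacter rule $\varepsilon(x+y,z)=\varepsilon(x,z)\varepsilon(y,z)$.
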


\subsection{Associative color trialgebras}
We now introduce  color trialgebras and establish their relationship with ternary Leibniz color algebras.
\begin{definition}
 An associative color trialgebra is a $G$-graded vector space $A$ equipped with a bicharacter $\varepsilon : G\otimes G\rightarrow \mathbb{K}^*$
and three even binary associative operations $\dashv, \perp, \vdash :
A\otimes A\rightarrow A$ (called left, middle and right respectively), satisfying the following relations :
\begin{eqnarray}
 (x\dashv y)\dashv z&=&x\dashv(y\vdash z)=x\dashv(y\perp z)\\
(x\vdash y)\dashv z&=&x\vdash(y\dashv z)\\
(x\dashv y)\vdash z&=&x\vdash(y\vdash z)=(x\perp y)\vdash z\\
(x\perp y)\dashv z&=&x\perp(y\dashv z)\\
(x\dashv y)\perp z&=&x\perp(y\vdash z)\\
(x\vdash y)\perp z&=&x\vdash(y\perp z)
\end{eqnarray}
for  all $x, y, z\in \mathcal{H}(A)$.
\end{definition}
\begin{example}
 Any associative color algebra $(A, \cdot, \varepsilon)$ is a color trialgebra with $\cdot=\dashv=\perp=\vdash$.
\end{example}
\begin{example}
 Any associative color dialgebra is a color trialgebra with trivial middle product.
\end{example}

\begin{example}
 If $(A, \dashv, \perp, \vdash, \varepsilon)$ is a color trialgebra, then so is $(A, \dashv', \perp', \vdash', \varepsilon)$, where
$$x\dashv' y:= y\vdash x, \quad x\perp' y:= y\perp x, \quad x\vdash' y:= y\dashv x.$$
\end{example}
The following proposition connects color trialgebras to non-commutative Leibniz-Poisson color algebras.
 As examples \ref{laa1} and \ref{laa2}, it will also gives a construction of non-commutative ternary Leibniz-Nambu-Poisson color
algebras.
\begin{proposition}
 Let $(A, \dashv, \perp, \vdash, \varepsilon)$ be a color trialgebra. Then $(A, \cdot, [-, -], \varepsilon)$ is a non-commutative
 Leibniz-Poisson color algebra with respect to the operations
$$ x\cdot y:= x\perp y \quad [x, y]=x\dashv y-\varepsilon(x, y)x\vdash y, $$
for  all $x, y, z\in \mathcal{H}(A)$.
\end{proposition}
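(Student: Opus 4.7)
The plan is to verify the three defining axioms of a non-commutative Leibniz-Poisson color algebra one by one, purely by expanding the definitions of $\cdot$ and $[-,-]$ and then repeatedly applying the six axioms of a color trialgebra. There is no conceptual difficulty; the work is bookkeeping of terms and bicharacter factors.

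First I would dispose of condition (1): since $\cdot = \perp$ and $\perp$ is an associative operation by the definition of a color trialgebra, $(A, \cdot, \varepsilon)$ is immediately an associative color algebra. Next, for condition (2), the Leibniz identity \eqref{cpa} for the bracket, I would expand $[[x,y], z]$, $[x, [y,z]]$ and $[[x,z], y]$ by bilinearity, obtaining in each case a linear combination of the four words $(x\,?\,y)\,?\,z$ with $?\in\{\dashv, \vdash\}$, each carrying a bicharacter coefficient read off from the graded commutator convention. Applying the trialgebra relations that involve only $\dashv$ and $\vdash$ — namely $(x\dashv y)\dashv z = x\dashv (y\vdash z)$, $(x\vdash y)\dashv z = x\vdash (y\dashv z)$, and $(x\dashv y)\vdash z = x\vdash (y\vdash z)$ — one reassociates the triple products so that the right-hand side $[x, [y,z]] + \varepsilon(y,z)[[x,z], y]$ rearranges into the same combination as $[[x,y], z]$. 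Third, for condition (3), the right Leibniz compatibility \eqref{comp}, I would similarly expand $[x\cdot y, z] = [x\perp y, z]$ and the right-hand side $x\perp [y,z] + \varepsilon(y,z) [x,z]\perp y$; this reduces to checking equalities of mixed words in $\perp$ with $\dashv$ and $\vdash$, and here precisely the remaining three trialgebra relations — $(x\perp y)\dashv z = x\perp (y\dashv z)$, $(x\dashv y)\perp z = x\perp(y\vdash z)$, and $(x\vdash y)\perp z = x\vdash(y\perp z)$ — exactly balance the two sides.

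The only real obstacle is administrative: making sure every bicharacter exponent is correctly computed from the degrees of the letters that get swapped past each other when expanding the bracket. I would organize the calculation by listing, for each monomial type $(x\,?\,y)\,?\,z$, the coefficient it receives on the left and on the right of each identity, and check that these coefficients agree after applying the relevant trialgebra axiom; the multiplicativity properties of $\varepsilon$ will automatically account for the graded signs.
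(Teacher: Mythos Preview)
The paper itself gives no proof of this proposition, so your plan of direct verification by expanding and invoking the trialgebra axioms is the appropriate one. There is, however, a genuine obstacle you have not flagged: as printed, the bracket $[x,y]=x\dashv y-\varepsilon(x,y)\,x\vdash y$ is a typographical error, and with it the proposition is \emph{false}. In the ungraded case, working in the free dialgebra (pointed-monomial basis), one finds $[[a,b],c]=\hat a bc - a\hat b c$ while $[a,[b,c]]+[[a,c],b]=-a\hat b c+ab\hat c+\hat a cb-a\hat c b$, and these do not agree. The intended bracket, consistent with the dialgebra examples earlier in the paper, is
\[
[x,y]=x\dashv y-\varepsilon(x,y)\,y\vdash x.
\]

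Your write-up is internally inconsistent on exactly this point. You invoke the ``graded commutator convention'' and speak of ``letters that get swapped past each other'', which only makes sense for the corrected bracket; yet you also claim the expansions of $[[x,y],z]$, $[x,[y,z]]$, $[[x,z],y]$ produce only words of the shape $(x\,?\,y)\,?\,z$, which is true only for the erroneous bracket. With the correct bracket, all permutations of $x,y,z$ appear among the monomials, and it is the five dialgebra relations (together with associativity of $\dashv$ and $\vdash$) that pair them off. For the compatibility condition your selection of the three mixed relations involving $\perp$ is exactly right once the bracket is fixed: for instance $\varepsilon(y,z)(x\dashv z)\perp y$ cancels $-\varepsilon(y,z)\,x\perp(z\vdash y)$ via $(x\dashv z)\perp y=x\perp(z\vdash y)$, and the remaining terms match by the other two relations. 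So your scaffolding survives, but you must first repair the statement and then redo the monomial bookkeeping accordingly.
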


\begin{theorem}
 Let $(A, \dashv, \perp, \vdash, \varepsilon)$ be a color trialgebra. Then $A$ is a ternary Leibniz color algebra with respect to the bracket
\begin{eqnarray}
 [x, y, z]:=x\dashv(y\perp z-\varepsilon(y, z)z\perp y)-\varepsilon(x, y+z)(y\perp z-\varepsilon(y, z)z\perp y)\vdash x,\nonumber
\end{eqnarray}
for  all $x, y, z\in \mathcal{H}(A)$.
\end{theorem}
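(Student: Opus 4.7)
The plan is a two-step reduction. First I introduce auxiliary brackets
\begin{equation*}
\{y,z\} := y \perp z - \varepsilon(y,z)\, z \perp y,
\qquad
[x,a]_L := x \dashv a - \varepsilon(x,a)\, a \vdash x,
\end{equation*}
so that the ternary bracket in the statement rewrites as the nested expression $[x,y,z] = [x, \{y,z\}]_L$. The goal is then to reduce the ternary $\varepsilon$-Nambu identity (\ref{lci}) to two ingredients: (i) the binary Leibniz identity for $[-,-]_L$, and (ii) a compatibility identity between $\{-,-\}$ and $[-,-]_L$. For (i), observe that discarding the middle product $\perp$ turns the trialgebra axioms, together with associativity of $\dashv$ and $\vdash$, into precisely the defining relations of an associative color dialgebra on $(A, \dashv, \vdash, \varepsilon)$. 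Example~\ref{H} then gives that $(A, [-,-]_L, \varepsilon)$ is a Leibniz color algebra, so (\ref{cpa}) holds for $[-,-]_L$.

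The crucial step is the compatibility lemma
\begin{equation*}
[\{y,z\}, v]_L \;=\; \{y, [z,v]_L\} \;+\; \varepsilon(z,v)\, \{[y,v]_L, z\}.
\end{equation*}
I would prove this by first establishing the building-block identity $[y \perp z, v]_L = y \perp [z,v]_L + \varepsilon(z,v)\, [y,v]_L \perp z$. The trialgebra axiom $(y \perp z)\dashv v = y \perp (z \dashv v)$ handles the $\dashv$-part of the left-hand side, the axiom $v \vdash (y \perp z) = (v \vdash y) \perp z$ handles the $\vdash$-part, and the remaining mixed axiom $(y \dashv v) \perp z = y \perp (v \vdash z)$ is exactly what cancels the two cross terms $\pm\varepsilon(z,v)\, y \perp (v \vdash z)$ produced by expanding $y \perp [z,v]_L$ and $\varepsilon(z,v)\,[y,v]_L \perp z$. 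Antisymmetrising this building block in $y$ and $z$ and carefully tracking the bicharacter factors then yields the compatibility lemma.

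With both ingredients in place, apply the binary Leibniz identity for $[-,-]_L$ to the triple $(x, \{y,z\}, \{t,u\})$, whose middle and right entries have degrees $y+z$ and $t+u$ respectively:
\begin{equation*}
[[x, \{y,z\}]_L, \{t,u\}]_L \;=\; [x, [\{y,z\},\{t,u\}]_L]_L \;+\; \varepsilon(y+z, t+u)\, [[x, \{t,u\}]_L, \{y,z\}]_L.
\end{equation*}
The second summand equals $\varepsilon(y+z, t+u)\,[[x,t,u], y, z]$ by definition, matching the last term of (\ref{lci}). Substituting the compatibility lemma with $v = \{t,u\}$ into the inner bracket of the first summand, and recognising $[z, \{t,u\}]_L = [z,t,u]$ and $[y, \{t,u\}]_L = [y,t,u]$, converts it into $[x, y, [z,t,u]] + \varepsilon(z, t+u)\,[x, [y,t,u], z]$, matching the first two terms of (\ref{lci}). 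The main obstacle is the compatibility lemma: the three mixed trialgebra axioms relating $\perp$ to $\dashv$ and $\vdash$ are precisely tuned so that the $\varepsilon$-cross terms cancel, but the verification of these cancellations is what requires the most careful sign-tracking.
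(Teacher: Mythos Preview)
Your argument is correct. The paper's own proof is the single line ``It follows from a straightforward computation,'' so you are not reproducing the paper's approach but rather supplying a structured proof where the paper offers none.

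Your route is genuinely more conceptual than a brute-force verification of (\ref{lci}). By factoring $[x,y,z] = [x,\{y,z\}]_L$, you isolate two independent pieces of structure: that $(A,\dashv,\vdash,\varepsilon)$ is an associative color dialgebra (whence $[-,-]_L$ satisfies (\ref{cpa}) via Example~\ref{H}), and that the $\varepsilon$-commutator $\{-,-\}$ of the middle product satisfies the derivation-type compatibility $[\{y,z\},v]_L = \{y,[z,v]_L\} + \varepsilon(z,v)\{[y,v]_L,z\}$. The latter uses exactly the three trialgebra axioms linking $\perp$ with $\dashv$ and $\vdash$, while the former uses only the dialgebra part of the axioms; so your decomposition explains \emph{which} axioms are responsible for \emph{which} part of the ternary identity. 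A direct expansion, by contrast, would produce roughly sixteen terms on each side and obscure this separation. Your building-block identity $[y\perp z,v]_L = y\perp[z,v]_L + \varepsilon(z,v)[y,v]_L\perp z$ and its antisymmetrisation are verified correctly, including the bicharacter bookkeeping (in particular the cancellation $\varepsilon(z,v)\varepsilon(v,z)=1$).
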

\begin{proof}
 It follows from a straightforward computation.
\end{proof}
\begin{corollary}
 Let $(A, \cdot, \varepsilon)$ be an associative color algebra. Then $(A, [-, -, -], \varepsilon)$ is a ternary Leibniz color algebra, where
$$[x, y, z]:=x\cdot(y\cdot z-\varepsilon(y, z)z\cdot y)-\varepsilon(x, y+z)(y\cdot z-\varepsilon(y, z)z\cdot y)\cdot x.$$
\end{corollary}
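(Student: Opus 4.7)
The plan is to derive the corollary directly from the preceding theorem by viewing the associative color algebra as a color trialgebra. Recall the earlier example stating that any associative color algebra $(A,\cdot,\varepsilon)$ carries the structure of an associative color trialgebra with $\dashv=\perp=\vdash=\cdot$ (one readily checks that each of the six defining axioms of a color trialgebra collapses to the associativity of $\cdot$). Thus no new verification of trialgebra identities is required.

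Once that identification is in place, I would simply substitute $\dashv,\perp,\vdash\mapsto\cdot$ into the ternary bracket defined in the preceding theorem:
\begin{eqnarray}
[x,y,z]=x\dashv(y\perp z-\varepsilon(y,z)z\perp y)-\varepsilon(x,y+z)(y\perp z-\varepsilon(y,z)z\perp y)\vdash x.\nonumber
\end{eqnarray}
The substitution produces exactly the bracket given in the corollary. Since the theorem guarantees that this bracket satisfies the ternary $\varepsilon$-Nambu identity \eqref{lci} for any color trialgebra, and since $(A,\cdot,\cdot,\cdot,\varepsilon)$ is one, it follows that $(A,[-,-,-],\varepsilon)$ is a ternary Leibniz color algebra.

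There is essentially no obstacle here: the corollary is a pure specialization, and the work was already done in the theorem. The only point worth being careful about is making sure that the identification $\dashv=\perp=\vdash=\cdot$ really satisfies all six trialgebra axioms, but each of these reduces to a single application of associativity of $\cdot$, so the reduction is immediate. Therefore the proof can be written in essentially one line, namely as an application of the theorem together with the example.
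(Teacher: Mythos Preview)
Your proposal is correct and follows exactly the route the paper intends: the corollary is an immediate specialization of the preceding theorem via the example that any associative color algebra is a color trialgebra with $\dashv=\perp=\vdash=\cdot$. The paper gives no separate proof of the corollary (it merely adds the remark that the resulting ternary bracket coincides with the one obtained from Example~\ref{a3c} and Theorem~\ref{ll3}), so your one-line reduction is precisely what is expected.
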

% \begin{remark}
Observe that this ternary Leibniz color algebra coincides with the one constructed from Example \ref{a3c} and Theorem \ref{ll3}.
% \end{remark}

% % % %  
Now we give some results on tridendriform color algebras and show that tridendriform color algebra leads to non-commutative Leibniz-Poisson
color algebras.
\begin{definition}
Let $q\in\bf \mathbb{K}$. A $q$-tridendriform color algebra is a quintuple $(A, \dashv, \vdash, \cdot, \varepsilon)$ consisting of
a bicharacter $\varepsilon : G\times G\rightarrow\mathbb{K}^*$ and a vector space $A$ on which the operations
$\dashv, \vdash, \cdot : A\otimes A\rightarrow A$ are three bilinear maps satisfying :
\begin{eqnarray}
(x\dashv y)\dashv z&=&x\dashv(y\dashv z+\varepsilon(z, y) y\vdash z+q\varepsilon(z, y) y\cdot z), \label{initf}\\
(x\vdash y)\dashv z&=&\varepsilon(z,x) x\vdash(y\dashv z),\label{initfh}\\
x\vdash(y\vdash z)&=&(\varepsilon(x, y) x\dashv y+x\vdash y+qx\cdot y)\vdash z,\\
(x\dashv y)\cdot z&=&\varepsilon(y, x) x\cdot(y\vdash z),\\
(x\vdash y)\cdot z&=&x\vdash(y\cdot z),\\
(x\cdot y)\dashv z&=&\varepsilon(z, x) x\cdot(y\dashv z),\\
(x\cdot y)\cdot z&=&x\cdot(y\cdot z),\label{initl1}
\end{eqnarray}
for $x, y, z\in \mathcal{H}(A)$.
\end{definition}
\begin{remark}
 \begin{enumerate}
  \item [1)] Whenever $q=1$, we recover the usual tridendriform color algebra \cite{B2}.
\item [2)] For any $q\neq0$, $(A, \dashv, \vdash, q^{-1}\cdot, \varepsilon)$ turn to tridendriform color algebra whenever
 $(A, \dashv, \vdash, \cdot, \varepsilon)$ is a $q$-tridendriform color algebra. Conversely, if 
$(A, \dashv, \vdash, \cdot, \varepsilon)$ is a tridendriform color algebra, then $(A, \dashv, \vdash, q\cdot, \varepsilon)$ 
is a $q$-tridendriform color algebra.
\item [3)] We recover dendriform color algebras \cite{BD} when the associative product is identically null.
% \item For any parameters $\lambda, \theta\in\mathbb{K}$, 
% $$(D, \dashv_\lambda:=\lambda\dashv, \vdash_\lambda:=\lambda\vdash, \cdot_\lambda:=\lambda\cdot, \alpha_\theta:=\theta\alpha)$$
%  is a $q$-Hom-tridendriform algebra whenever $(D, \dashv, \vdash, \cdot, \alpha)$ be a $q$-Hom-tridendriform algebra.
 \end{enumerate}
\end{remark}
\begin{definition}

An even linear map $R : A\rightarrow A$ on a $q$-tridendriform algebra color $(A, \dashv, \vdash, \cdot, \varepsilon)$ is said to be a 
Rota-Baxter operator of weight $\lambda\in\mathbb{K}$ if each operation satisfies
{\it the Rota-Baxter identity} (\ref{rbi}).
% \begin{eqnarray}
%  R(x)\dashv R(y)&=&R(R(x)\dashv y+ x\dashv R(y)+\lambda x\dashv y),\\
%  R(x)\vdash R(y)&=&R(R(x)\vdash y+x\vdash R(y)+\lambda x\vdash y),\\
%  R(x)\cdot R(y)&=&R(R(x)\cdot y+x\cdot R(y)+\lambda x\cdot y).\label{initl}
% \end{eqnarray}
% for any $x, y\in\mathcal{H}(D)$. 
In this case, the quintuple $(A, \dashv, \vdash, \cdot, R)$ is called {\it Rota-Baxter $q$-tridendriform color  algebra of weight $\lambda$}.
\end{definition}
\begin{remark}
 We have a similar definition for {\it Rota-Baxter associative color trialgebras},  which are associative color trialgebras endowed with
 an even linear operator that satisfies the {\it Rota-Baxter identity} for each product. 
\end{remark}
\begin{proposition}\label{car1}
 Let $(A, \cdot, \varepsilon, R)$ be a Rota-Baxter associative color algebra of weight $\lambda$. Let us define the operations $\dashv, \vdash$ and 
$\bullet$ on $A$ by 
$$x\dashv y:=x\cdot R(y), \quad x\vdash y:=\varepsilon(x, y)R(x)\cdot y\quad\mbox{and}\quad x\bullet y:=\lambda\varepsilon(x, y)x\cdot y.$$
Then $(A, \dashv, \vdash, \bullet, \varepsilon)$ is a tridendriform color algebra.
\end{proposition}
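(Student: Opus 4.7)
The plan is to directly verify each of the seven defining identities (\ref{initf})--(\ref{initl1}) of a (q=1)-tridendriform color algebra for the operations $\dashv$, $\vdash$, $\bullet$ given in the statement. The only tools needed are associativity of $\cdot$, the Rota--Baxter identity $R(x)\cdot R(y) = R(R(x)\cdot y + x\cdot R(y) + \lambda x\cdot y)$, and the bicharacter rules $\varepsilon(a,b+c)=\varepsilon(a,b)\varepsilon(a,c)$, $\varepsilon(a+b,c)=\varepsilon(a,c)\varepsilon(b,c)$, and $\varepsilon(a,b)\varepsilon(b,a)=1$. Since $R$ is even, products of the form $x\cdot R(y)$ have degree $x+y$, which allows the bicharacter factors to be manipulated freely.

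First I would dispatch the five ``easy'' identities, namely $(x\vdash y)\dashv z=\varepsilon(z,x)x\vdash(y\dashv z)$, $(x\dashv y)\bullet z=\varepsilon(y,x)x\bullet(y\vdash z)$, $(x\vdash y)\bullet z=x\vdash(y\bullet z)$, $(x\bullet y)\dashv z=\varepsilon(z,x)x\bullet(y\dashv z)$, and the associativity-type identity (\ref{initl1}). Each of these reduces, after unfolding the definitions, to a single application of associativity of $\cdot$ (possibly moving an $R$ past a factor that is not inside $R$), followed by verifying that the accumulated bicharacter factor on each side collapses to the same product of $\varepsilon$'s. For instance, in (\ref{initl1}) one checks the identity $\varepsilon(x,y)\varepsilon(x+y,z)=\varepsilon(x,y+z)\varepsilon(y,z)$, which is immediate from the bicharacter axioms.

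The two axioms that genuinely exploit the Rota--Baxter relation are (\ref{initf}) and $x\vdash(y\vdash z) = (\varepsilon(x,y)x\dashv y + x\vdash y + x\bullet y)\vdash z$. For (\ref{initf}), expand the left side as $(x\cdot R(y))\cdot R(z)$, use associativity to rewrite it as $x\cdot(R(y)\cdot R(z))$, and then invoke the Rota--Baxter identity to obtain $x\cdot R(R(y)\cdot z + y\cdot R(z) + \lambda y\cdot z)$. Unfolding the right-hand side yields $x\cdot R\bigl(y\cdot R(z) + \varepsilon(z,y)\varepsilon(y,z)R(y)\cdot z + \lambda\varepsilon(z,y)\varepsilon(y,z)y\cdot z\bigr)$, and the cancellation $\varepsilon(z,y)\varepsilon(y,z)=1$ makes the two sides coincide; this is precisely what forces the coefficient $q=1$. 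The second identity is handled symmetrically, pulling $R$'s to the right.

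The main obstacle is purely notational: keeping the $\varepsilon$-signs consistent, since $\vdash$ and $\bullet$ each inject a bicharacter factor, and the intermediate expressions are of the form $\varepsilon(a,\,b+c)$, which must be expanded and matched term by term. No conceptual input beyond associativity of $\cdot$ and the Rota--Baxter identity is needed, so once the sign bookkeeping is carried out carefully the result drops out.
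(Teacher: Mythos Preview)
Your proposal is correct and follows the same direct-verification approach as the paper, which also proceeds by unfolding the definitions and checking the axioms (the paper only exhibits the computation for axiom (\ref{initfh}), which is one of your ``easy'' associativity-only identities). Your classification of (\ref{initf}) and the $\vdash$-analogue as the only two axioms requiring the Rota--Baxter identity, with the remaining five needing only associativity plus bicharacter bookkeeping, is accurate and slightly more informative than the paper's terse ``simple calculation''.
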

\begin{proof}
 The proof follows from a simple calculation. For instance, for axiom (\ref{initfh}), one has :
\begin{eqnarray}
 (x\dashv y)\vdash-\varepsilon(z, x)x\vdash (y\dashv z)
&=&\varepsilon(x, y)(R(x)\cdot y)\cdot R(z)-\varepsilon(z, x)\varepsilon(x, y+z) R(x)\cdot(y\cdot R(z))\nonumber\\
&=&\varepsilon(x, y)\Big((R(x)\cdot y)\cdot R(z)- R(x)\cdot(y\cdot R(z)) \Big).\nonumber
\end{eqnarray}
The last line vanishes by associativity.
\end{proof}

\begin{proposition}
 Let $(A, \cdot, \varepsilon, R)$ be a Rota-Baxter associative color algebra of weight $\lambda$.
Define the operations $\dashv$ and $\vdash$ by 
$$x\dashv y:=x\cdot R(y)+\lambda x\cdot y \quad\mbox{and}\quad x\vdash y:=\varepsilon(x, y)R(x)\cdot y.$$
Then $(A, \dashv, \vdash, \varepsilon)$ is a dendriform color algebra.
\end{proposition}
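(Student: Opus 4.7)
The strategy is to verify the three dendriform color algebra axioms directly, using the definitions of $\dashv$ and $\vdash$, associativity of $\cdot$, the Rota-Baxter identity of weight $\lambda$, and the cocycle properties of the bicharacter $\varepsilon$. Since the excerpt defines dendriform color algebras as the $\cdot\equiv 0$ specialization of tridendriform color algebras (Remark after the tridendriform definition), the three axioms to check are obtained by setting the middle product to zero in (\ref{initf})--(\ref{initl1}), namely
\begin{align*}
(x\dashv y)\dashv z &= x\dashv\bigl(y\dashv z+\varepsilon(z,y)\,y\vdash z\bigr),\\
(x\vdash y)\dashv z &= \varepsilon(z,x)\,x\vdash(y\dashv z),\\
x\vdash(y\vdash z) &= \bigl(\varepsilon(x,y)\,x\dashv y+x\vdash y\bigr)\vdash z.
\end{align*}

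The main technical input is the identity
\[
R(x\cdot R(y))+R(R(x)\cdot y)+\lambda R(x\cdot y)=R(x)\cdot R(y),
\]
which is just a rearrangement of the weight-$\lambda$ Rota-Baxter relation (\ref{rbi}). This is what allows the $R$ to be pushed outside of the combined expression $y\dashv z+\varepsilon(z,y)y\vdash z$, after one observes that $\varepsilon(z,y)\,y\vdash z=R(y)\cdot z$ (the two bicharacter factors cancel). For the first axiom, expanding the left-hand side gives $x\cdot R(y)\cdot R(z)+\lambda x\cdot y\cdot R(z)+\lambda x\cdot R(y)\cdot z+\lambda^2 x\cdot y\cdot z$, and the right-hand side becomes $x\cdot R\bigl(y\cdot R(z)+R(y)\cdot z+\lambda y\cdot z\bigr)+\lambda x\cdot\bigl(y\cdot R(z)+R(y)\cdot z+\lambda y\cdot z\bigr)$; applying the boxed identity collapses the first term to $x\cdot R(y)\cdot R(z)$, matching the left-hand side by associativity.

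For the second axiom the computation is purely formal: both sides expand to $\varepsilon(x,y)R(x)\cdot y\cdot R(z)+\lambda\varepsilon(x,y)R(x)\cdot y\cdot z$ after using $\varepsilon(z,x)\varepsilon(x,z)=1$ and $\varepsilon(x,y+z)=\varepsilon(x,y)\varepsilon(x,z)$, so associativity finishes the verification. The third axiom is symmetric to the first: the right-hand side gathers a common factor of the form $\varepsilon(x,y)\varepsilon(x+y,z)\bigl[R(x\cdot R(y))+\lambda R(x\cdot y)+R(R(x)\cdot y)\bigr]\cdot z$, which collapses to $\varepsilon(x,y)\varepsilon(x+y,z)R(x)\cdot R(y)\cdot z$ by the Rota-Baxter identity, while the left-hand side is $\varepsilon(x,y+z)\varepsilon(y,z)R(x)\cdot R(y)\cdot z$; the bicharacter factors agree by bilinearity of $\varepsilon$.

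The only genuine obstacle is organizing the first axiom so that the Rota-Baxter identity is applied to exactly the right three-term sum; after that observation the remaining two axioms are routine. No hidden signs from $\varepsilon$ appear because the weight-$\lambda$ correction $\lambda x\cdot y$ in $\dashv$ carries the same homogeneous degree as $x\cdot R(y)$, so all bicharacter manipulations reduce to the two identities $\varepsilon(u,v+w)=\varepsilon(u,v)\varepsilon(u,w)$ and $\varepsilon(u+v,w)=\varepsilon(u,w)\varepsilon(v,w)$.
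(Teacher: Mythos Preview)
Your proof is correct and follows exactly the approach of the paper, which simply says that ``the axioms are checked as in the proof of Proposition~\ref{car1}'' --- i.e., by direct expansion using associativity, the bicharacter identities, and the Rota-Baxter relation. Your write-up is in fact more detailed than the paper's own proof.
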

\begin{proof}
The axioms are checked as in the proof of Proposition \ref{car1}.
\end{proof}
\begin{corollary}
 Let $(A, \cdot, \varepsilon, R)$ be a Rota-Baxter associative color algebra of weight $0$. We define the even bilinear operations 
$\dashv : A\times A\rightarrow A$ and $\vdash : A\times A\rightarrow A$ on $A$ by
$$x\dashv y:=x\cdot R(y)\quad\mbox{and}\quad x\vdash y:=\varepsilon(x, y)R(x)\cdot y.$$
Then $(A, \dashv, \vdash, \varepsilon)$ is a dendriform color algebra.
\end{corollary}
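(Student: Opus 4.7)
The plan is to obtain this corollary as an immediate specialization of the preceding proposition. That proposition asserts that for any weight $\lambda$, setting
\[
x\dashv y:=x\cdot R(y)+\lambda\, x\cdot y,\qquad x\vdash y:=\varepsilon(x,y)R(x)\cdot y,
\]
yields a dendriform color algebra. Substituting $\lambda=0$ kills the correction term $\lambda\, x\cdot y$ in the left product and reproduces exactly the two operations in the statement. Hence the conclusion is the preceding proposition evaluated at $\lambda=0$, and no further argument is required.

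For completeness, I would record that one may equally well verify the three dendriform color axioms directly. The key input is the Rota-Baxter identity of weight zero, namely $R(x)\cdot R(y)=R(R(x)\cdot y+x\cdot R(y))$, together with the associativity of $\cdot$ and the bicharacter properties of $\varepsilon$. Each axiom of a dendriform color algebra expands, after moving $R$ through according to (\ref{rbi}) with $\lambda=0$, into a sum of terms that rearrange by associativity of the underlying product.

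There is no real obstacle here; the only mild bookkeeping concern is tracking the sign $\varepsilon(x,y)$ that appears in $x\vdash y$ when one checks the mixed axiom relating $\dashv$ and $\vdash$, which is precisely the computation already illustrated in the proof of Proposition \ref{car1}. Since we are simply instantiating a proposition whose proof is asserted to go through by the same calculation, the corollary is immediate.
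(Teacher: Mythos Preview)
Your proposal is correct and matches the paper's approach exactly: the paper states this as a corollary with no separate proof, and it is indeed obtained by specializing the preceding proposition to $\lambda=0$. Your additional remark about verifying the dendriform axioms directly via the weight-zero Rota-Baxter identity is a harmless elaboration beyond what the paper records.
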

The below lemma associates associative color algebras to tridendriform color algebras.
\begin{lemma}\cite{B2}\label{car22}
Let $(A, \dashv, \vdash, \cdot, \varepsilon)$ be a  tridendriform color algebra. Then $(A, \ast, \varepsilon)$ is
an associative color algebra, where
 $x\ast y=x\vdash y+\varepsilon(x, y)x\dashv y+ x\cdot y$.
\end{lemma}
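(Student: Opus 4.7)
\smallskip

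\noindent\emph{Proof plan.} I would prove associativity of $\ast$ by a direct expansion-and-matching argument using the seven tridendriform color axioms (with $q=1$). The strategy is purely bookkeeping; no structural insight beyond bilinearity and the bicharacter identities is needed.

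First, I would expand $(x\ast y)\ast z$ by bilinearity into nine summands indexed by the choice of an ``inner'' operation among $\{\vdash,\dashv,\cdot\}$ (used to form $x\ast y$) and an ``outer'' operation (used to pair with $z$), each carrying the appropriate $\varepsilon$-weight from the definition of $\ast$; for instance the inner-$\dashv$ summands pick up the factor $\varepsilon(x,y)$ and the outer-$\dashv$ summands pick up $\varepsilon(x+y,z)$. I would expand $x\ast(y\ast z)$ symmetrically. This reduces the whole identity to matching nine LHS terms with nine RHS terms.

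Next, I would organize the matching into three groups according to the ``outer-right'' operation on the LHS. (i) The three terms ending in $\vdash z$, namely $(x\vdash y)\vdash z$, $\varepsilon(x,y)(x\dashv y)\vdash z$, $(x\cdot y)\vdash z$, collapse into the single RHS term $x\vdash(y\vdash z)$ by axiom (\ref{initfh})--style identity for $x\vdash(y\vdash z)$. (ii) The three terms ending in $\dashv z$, weighted by $\varepsilon(x+y,z)$, are handled differently: the inner-$\dashv$ piece $\varepsilon(x+y,z)\varepsilon(x,y)(x\dashv y)\dashv z$ expands by axiom (\ref{initf}) into three RHS summands containing $x\dashv(y\dashv z)$, $x\dashv(y\vdash z)$ and $x\dashv(y\cdot z)$; meanwhile $\varepsilon(x+y,z)(x\vdash y)\dashv z$ and $\varepsilon(x+y,z)(x\cdot y)\dashv z$ match directly, via the two axioms whose LHS is $(x\vdash y)\dashv z$ and $(x\cdot y)\dashv z$, to $\varepsilon(y,z)\,x\vdash(y\dashv z)$ and $\varepsilon(y,z)\,x\cdot(y\dashv z)$. (iii) The three terms ending in $\cdot z$ match directly by the three axioms with $\cdot$ as the outer operation: $(x\vdash y)\cdot z=x\vdash(y\cdot z)$; $\varepsilon(x,y)(x\dashv y)\cdot z=x\cdot(y\vdash z)$ after using $\varepsilon(x,y)\varepsilon(y,x)=1$; and $(x\cdot y)\cdot z=x\cdot(y\cdot z)$.

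The main obstacle, and really the only point where care is required, is the bookkeeping of $\varepsilon$-coefficients. Throughout the matching one repeatedly needs identities such as $\varepsilon(x+y,z)\varepsilon(z,x)=\varepsilon(y,z)$ and $\varepsilon(x,y+z)\varepsilon(y,z)=\varepsilon(x,y)\varepsilon(x,z)\varepsilon(y,z)$ arising from the bicharacter axioms, to see that the prefactor produced by a tridendriform axiom exactly matches the prefactor picked up from the definition of $\ast$ on the other side. Once these are checked term by term, all nine LHS summands are accounted for and the identity $(x\ast y)\ast z=x\ast(y\ast z)$ follows.
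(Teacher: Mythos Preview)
Your approach is correct: the direct nine-term expansion followed by grouping according to the outer operation and matching via the seven $q=1$ tridendriform axioms, together with the bicharacter identities $\varepsilon(x+y,z)\varepsilon(z,x)=\varepsilon(y,z)$ and $\varepsilon(x,y+z)=\varepsilon(x,y)\varepsilon(x,z)$, does establish associativity of $\ast$ as you outline. The paper itself does not supply a proof of this lemma---it is quoted from \cite{B2}---so there is nothing to compare against; your argument is the standard verification and is exactly what one expects the cited proof to be.
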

\begin{theorem}\label{car2}
 Let $(A, \dashv, \vdash, \bullet, \varepsilon)$ be a tridendriform color algebra. Then $(A, \ast, [-, -], \varepsilon)$ is a non-commutative
 Leibniz-Poisson color algebra, where
$$x\ast y:=x\vdash y+\varepsilon(x, y)x\dashv y+x\cdot y\quad\mbox{and}\quad [x, y]:=x\ast y-\varepsilon(x, y)y\ast x.$$
\end{theorem}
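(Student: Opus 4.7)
The plan is to recognize this as an immediate two-step application of results already in the paper. The statement has the suspicious shape ``$\ast$ is the total product, and $[-,-]$ is its $\varepsilon$-commutator,'' which exactly matches the hypothesis of Proposition \ref{laa2}. So the entire content of the theorem reduces to verifying that $(A,\ast,\varepsilon)$ is associative; once that is in hand, the non-commutative Leibniz--Poisson structure is automatic.

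First I would invoke Lemma \ref{car22}: for any tridendriform color algebra $(A,\dashv,\vdash,\bullet,\varepsilon)$ the total product
\[
x\ast y := x\vdash y+\varepsilon(x,y)\,x\dashv y+x\bullet y
\]
is associative, i.e.\ $(A,\ast,\varepsilon)$ is an associative color algebra. This is exactly the $q=1$ (or rescaled) case recorded earlier and is stated as a known fact, so no calculation is needed at this stage.

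Second, I would apply Proposition \ref{laa2} to the associative color algebra $(A,\ast,\varepsilon)$. That proposition asserts that an associative color algebra equipped with the $\varepsilon$-commutator bracket
\[
[x,y] := x\ast y - \varepsilon(x,y)\, y\ast x
\]
is automatically a non-commutative Leibniz--Poisson color algebra. This gives all three required axioms at once: $(A,\ast,\varepsilon)$ is associative (step one), $(A,[-,-],\varepsilon)$ is a Leibniz color algebra, and the compatibility (right Leibniz identity)
\[
[x\ast y, z] = x\ast [y,z] + \varepsilon(y,z)\,[x,z]\ast y
\]
is satisfied.

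There is no real obstacle here; the theorem is essentially a formal composition of Lemma \ref{car22} with Proposition \ref{laa2}. The only mild care point is the notational mismatch between $\bullet$ in the tuple and $\cdot$ in the displayed definition of $\ast$: one should remark explicitly that these denote the same third (``middle'') operation, so that Lemma \ref{car22} applies verbatim. After that, the proof is complete in two lines.
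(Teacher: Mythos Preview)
Your proposal is correct and matches the paper's own proof exactly: the paper simply writes ``The proof follows from Lemma~\ref{car22} and Proposition~\ref{laa2}.'' Your extra remark about the $\bullet$ versus $\cdot$ notational mismatch is a useful clarification but does not change the argument.
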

\begin{proof}
 The proof follows from Lemma \ref{car22} and Proposition \ref{laa2}.
\end{proof}

\begin{corollary}
  Let $(A, \cdot, \varepsilon, R)$ be a Rota-Baxter associative color algebra of weight $\lambda$. Then $A$ is a non-commutative Leibniz-Poisson color
algebra with respect to the products :
$$x\ast y:=\varepsilon(x, y)\Big( R(x)\cdot y+x\cdot R(y)+\lambda x\cdot y\Big)\quad\mbox{and}\quad [x, y]:=x\ast y-\varepsilon(x, y)y\ast x.$$
\end{corollary}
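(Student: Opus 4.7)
The plan is to chain the two preceding results, namely Proposition \ref{car1} and Theorem \ref{car2}, and then verify that the composite operation obtained coincides with the one displayed in the corollary.

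First I would invoke Proposition \ref{car1}: starting from the Rota-Baxter associative color algebra $(A,\cdot,\varepsilon,R)$ of weight $\lambda$, I obtain a tridendriform color algebra structure $(A,\dashv,\vdash,\bullet,\varepsilon)$ given by
\begin{eqnarray*}
x\dashv y&:=&x\cdot R(y),\\
x\vdash y&:=&\varepsilon(x,y)\,R(x)\cdot y,\\
x\bullet y&:=&\lambda\,\varepsilon(x,y)\,x\cdot y.
\end{eqnarray*}
(Strictly speaking Proposition \ref{car1} is stated for weight $\lambda$ with the middle product having the factor $\lambda$, so the same conclusion applies directly without any modification.)

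Next I would apply Theorem \ref{car2} to this tridendriform color algebra, which yields a non-commutative Leibniz-Poisson color algebra $(A,\ast,[-,-],\varepsilon)$ with
\[
x\ast y = x\vdash y + \varepsilon(x,y)\,x\dashv y + x\bullet y,\qquad [x,y] = x\ast y - \varepsilon(x,y)\,y\ast x.
\]
The only remaining task is a short bookkeeping computation: substitute the three operations coming from Proposition \ref{car1} into this formula for $x\ast y$, so that
\[
x\ast y \;=\; \varepsilon(x,y)\,R(x)\cdot y + \varepsilon(x,y)\,x\cdot R(y) + \lambda\,\varepsilon(x,y)\,x\cdot y \;=\; \varepsilon(x,y)\bigl(R(x)\cdot y + x\cdot R(y) + \lambda\, x\cdot y\bigr),
\]
which is exactly the product appearing in the statement, and the bracket is defined identically as the $\varepsilon$-commutator of $\ast$.

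There is essentially no obstacle here, since the axioms of a non-commutative Leibniz-Poisson color algebra (associativity of $\ast$, the Leibniz identity for $[-,-]$, and the right Leibniz compatibility \eqref{comp}) have already been established in Theorem \ref{car2} for any tridendriform color algebra. The only point requiring a moment's care is the factor $\varepsilon(x,y)$ that appears uniformly in all three summands of $x\ast y$: it is produced by the $\varepsilon$-twist in the definitions of $\vdash$ and $\bullet$ from Proposition \ref{car1}, combined with the $\varepsilon$-twist already present in the middle term $\varepsilon(x,y)\,x\dashv y$ of the Theorem \ref{car2} formula. Once this matching is recorded, the corollary is immediate.
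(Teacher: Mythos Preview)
Your proof is correct and follows exactly the paper's approach: the paper's proof simply reads ``It comes from propositions \ref{car1} and \ref{car2},'' and you have carried out precisely this chaining, adding the explicit substitution that verifies the displayed formula for $x\ast y$.
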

\begin{proof}
 It comes from propositions \ref{car1} and \ref{car2}.
\end{proof}

 We end this subsection by establishing a connection between trialgebras and $-1$-tridendriform algebras (with trivial grading).
\begin{theorem}
 Let $(A, \dashv, \perp, \vdash)$ be an associative trialgebra. Then $(A, \dashv, \vdash, \perp)$ is a $-1$-tridendriform algebra.
\end{theorem}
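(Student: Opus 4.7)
The plan is to verify the seven $q=-1$-tridendriform axioms (\ref{initf})–(\ref{initl1}) directly from the six trialgebra axioms, taking $\cdot:=\perp$ and $\varepsilon\equiv 1$. The key observation is that the trialgebra axioms come in two shapes: single-equality axioms like $(x\vdash y)\dashv z = x\vdash(y\dashv z)$, which already match a tridendriform axiom verbatim, and double-equality axioms like $(x\dashv y)\dashv z = x\dashv(y\vdash z) = x\dashv(y\perp z)$, which produce an extra identity whose effect is to make the $q=-1$ correction term vanish.

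First I would dispose of the four axioms that carry no $q$-coefficient. Axiom (\ref{initfh}) is literally the second trialgebra relation. The middle-product axioms $(x\dashv y)\cdot z = x\cdot(y\vdash z)$, $(x\vdash y)\cdot z = x\vdash(y\cdot z)$, and $(x\cdot y)\dashv z = x\cdot(y\dashv z)$ are the fifth, sixth and fourth trialgebra relations after substituting $\cdot=\perp$. The final axiom $(x\cdot y)\cdot z = x\cdot(y\cdot z)$ is the assumed associativity of $\perp$.

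The main work is in the two axioms carrying a $q=-1$ term, namely (\ref{initf}) and the third axiom $x\vdash(y\vdash z)=(x\dashv y+x\vdash y+qx\cdot y)\vdash z$. For (\ref{initf}), I expand the right-hand side as $x\dashv(y\dashv z)+x\dashv(y\vdash z)-x\dashv(y\perp z)$. The double equality $x\dashv(y\vdash z)=x\dashv(y\perp z)$ from the first trialgebra relation makes the last two terms cancel, and then associativity of $\dashv$ yields $x\dashv(y\dashv z)=(x\dashv y)\dashv z$. For the third axiom, expanding the right-hand side gives $(x\dashv y)\vdash z+(x\vdash y)\vdash z-(x\perp y)\vdash z$; the double equality $(x\dashv y)\vdash z=(x\perp y)\vdash z$ from the third trialgebra relation eliminates the outer pair, and associativity of $\vdash$ turns $(x\vdash y)\vdash z$ into $x\vdash(y\vdash z)$.

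No step is deep, but the mild obstacle is bookkeeping: one must notice that it is precisely the sign $q=-1$ (and not $q=+1$) that lets the double-equality trialgebra axioms collapse the three-term right-hand sides to a single associativity statement. Once this is observed, the verification reduces to term-by-term matching, and the theorem follows.
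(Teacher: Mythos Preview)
Your proposal is correct and follows exactly the approach the paper indicates, namely a direct check that the tridendriform axioms with $q=-1$ and $\cdot=\perp$ are consequences of the trialgebra axioms. The paper itself records only the sentence ``The proof comes from both definitions,'' so your write-up simply supplies the routine verification that the paper leaves implicit; in particular, your observation that the double-equality trialgebra axioms are precisely what make the $q=-1$ correction terms cancel is the whole content of the argument.
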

\begin{proof}
 The proof comes from both definitions.
\end{proof}

\begin{corollary}\label{ta}
Let $(A, \dashv, \perp, \vdash)$ be an associative trialgebra. Then $A$ is an associative algebra with respect to the multiplication
$\ast : A\otimes A\rightarrow A$ :
$$x\ast y=x\dashv y+x\vdash y-x\perp y$$
for any $x, y\in A$.
\end{corollary}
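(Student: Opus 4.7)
The plan is to chain together three results that have already been proved or stated in this subsection, so that no new computation is needed. First I would invoke the immediately preceding theorem, which states that the associative trialgebra $(A,\dashv,\perp,\vdash)$ is a $-1$-tridendriform algebra when we regard $\perp$ as the third (formerly associative $\cdot$) product; in other words $(A,\dashv,\vdash,\perp)$ satisfies the $q$-tridendriform axioms (\ref{initf})--(\ref{initl1}) with $q=-1$ and with trivial grading, so all bicharacter factors reduce to $1$.

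Next I would apply item 2) of the Remark that follows the $q$-tridendriform definition: since $(A,\dashv,\vdash,\perp)$ is a $(-1)$-tridendriform algebra, rescaling the third operation by $q^{-1}=-1$ converts it into an ordinary tridendriform algebra $(A,\dashv,\vdash,-\perp)$.

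Finally, I would feed this ordinary tridendriform algebra into Lemma \ref{car22}, which asserts that the sum of the three operations is an associative product. With trivial grading $\varepsilon\equiv 1$, Lemma \ref{car22} produces the operation
\begin{eqnarray}
x\ast y \;=\; x\vdash y + x\dashv y + x(-\perp)y \;=\; x\dashv y + x\vdash y - x\perp y,\nonumber
\end{eqnarray}
which is exactly the product in the statement of the corollary, and the lemma guarantees it is associative.

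The only subtlety to watch for is that the conversion in the Remark matches signs correctly, i.e.\ that turning $\perp$ into $-\perp$ really does send the $q=-1$ axioms (\ref{initf})--(\ref{initl1}) into the $q=1$ axioms term by term. Since that check is already subsumed in the Remark, there is no genuine obstacle; the corollary is a formal consequence of the chain \emph{Theorem $\Rightarrow$ Remark $\Rightarrow$ Lemma \ref{car22}}. A direct proof is also available, where one expands $(x\ast y)\ast z$ and $x\ast(y\ast z)$ into nine summands each and cancels them termwise via the six trialgebra identities, but the chain above is shorter and avoids the bookkeeping.
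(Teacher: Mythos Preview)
Your proof is correct and matches the paper's intended argument: the corollary is placed immediately after the theorem that turns an associative trialgebra into a $-1$-tridendriform algebra, and the paper gives no separate proof precisely because the result follows by the chain you describe (Theorem $\Rightarrow$ Remark item 2 with $q=-1$ $\Rightarrow$ Lemma~\ref{car22} in the trivially graded case). Your sign check in the rescaling step is correct, since each tridendriform axiom is homogeneous in the number of occurrences of the middle product.
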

\begin{corollary}
 Let $(A, \dashv, \perp, \vdash)$ be an associative trialgebra. Then $(A, \ast, [-, -])$ is a non-commutative Leibniz-Poisson algebra with 
$$x\ast y=x\dashv y+x\vdash y-x\perp y\quad{and}\quad [x, y]=x\ast y-y\ast x$$
for any $x, y\in A$.
\end{corollary}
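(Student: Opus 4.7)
The plan is to reduce the statement to two results already established in the paper: Corollary \ref{ta}, which says that for an associative trialgebra $(A,\dashv,\perp,\vdash)$ the operation $x\ast y := x\dashv y + x\vdash y - x\perp y$ is associative, and Proposition \ref{laa2}, which says that any associative color algebra gives rise to a non-commutative Leibniz-Poisson color algebra via the commutator bracket.

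First, I would invoke Corollary \ref{ta} to conclude that $(A,\ast)$ is an associative algebra. No further calculation is needed here: the preceding theorem (associative trialgebras are $-1$-tridendriform algebras) together with Lemma \ref{car22} at $q=-1$ (applied with trivial grading) yield associativity of $\ast$, and Corollary \ref{ta} packages exactly this fact.

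Second, I would apply Proposition \ref{laa2} to the associative algebra $(A,\ast)$, specialized to the trivial grading $G=\{0\}$ with $\varepsilon\equiv 1$. Proposition \ref{laa2} asserts that for any associative color algebra $(A,\cdot,\varepsilon)$, the triple $(A,\cdot,[-,-]=\cdot-\varepsilon\;\cdot^{op},\varepsilon)$ is a non-commutative Leibniz-Poisson color algebra. Substituting $\ast$ for $\cdot$ and $\varepsilon\equiv 1$, the commutator $[x,y]=x\ast y - y\ast x$ makes $(A,\ast)$ into a Leibniz algebra and the right Leibniz compatibility (\ref{comp}) is satisfied.

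The combination of these two steps gives the three required axioms of a non-commutative Leibniz-Poisson algebra: associativity of $\ast$, the Leibniz identity for $[-,-]$, and the compatibility $[x\ast y,z] = x\ast[y,z] + [x,z]\ast y$. There is essentially no obstacle, since the heavy lifting was done in the proofs of Corollary \ref{ta} and Proposition \ref{laa2}; the only mild subtlety is making explicit that the ungraded case is obtained from the color framework by setting $G=\{0\}$, so that all sign factors $\varepsilon(\cdot,\cdot)$ collapse to $1$. Therefore the proof should read simply: \emph{It follows from Corollary \ref{ta} and Proposition \ref{laa2}.}
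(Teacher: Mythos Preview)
Your proposal is correct and matches the paper's intended approach: the corollary is stated without proof precisely because it follows immediately from Corollary~\ref{ta} (associativity of $\ast$) together with Proposition~\ref{laa2} (the commutator on an associative color algebra yields a non-commutative Leibniz-Poisson color algebra), specialized to trivial grading. Your final one-line proof is exactly what the paper implicitly relies on.
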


The below corollaries are based on some results of \cite{MD} and the following remark.
\begin{remark}
 If $R$ is a Rota-Baxter operator of weight $\lambda\in\mathbb{K}$ on the trialgebra $(A, \dashv, \perp, \vdash)$, it is also a Rota-Baxter 
operator of weight $\lambda\in\mathbb{K}$ on the associative algebra $(A, \ast)$ of Corollary \ref{ta}.
\end{remark}

\begin{corollary}
 Let $(A, \dashv, \perp, \vdash, R)$ be a Rota-Baxter trialgebra of weight $0$. Then $(A, \star)$ is a left-symmetric algebra with 
$$x\star y=R(x)\ast y-y\ast R(x)\quad{and}\quad  x\ast y=x\dashv y+x\vdash y-x\perp y$$
for all $x, y\in A$.
\end{corollary}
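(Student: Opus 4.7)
The plan is to chain together results already established in the excerpt and avoid any direct five-term calculation. First, apply Corollary \ref{ta} to the trialgebra $(A,\dashv,\perp,\vdash)$: this already gives an associative algebra $(A,\ast)$ with the product $x\ast y = x\dashv y + x\vdash y - x\perp y$. Next, invoke the preceding Remark to transfer the Rota-Baxter operator $R$ of weight $0$ from the trialgebra to $(A,\ast)$; so $R$ is a Rota-Baxter operator of weight $0$ on an ordinary associative algebra, which is precisely the setting into which I want to land.

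The second step is to pass from associative to Lie. Since $(A,\ast)$ is associative, its commutator $[x,y]_{\ast} := x\ast y - y\ast x$ makes $(A,[\,,\,]_{\ast})$ a Lie algebra (this is the ungraded case of Example \ref{a3c}). A direct consequence of the Rota-Baxter identity (\ref{rbi}) for $\ast$ is that $R$ is also a Rota-Baxter operator of weight $0$ for this commutator bracket, because the Rota-Baxter identity is linear in the product. Thus $(A,[\,,\,]_{\ast},R)$ is a Rota-Baxter Lie algebra of weight $0$ (a Rota-Baxter Lie color algebra with trivial grading).

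Now apply Lemma \ref{hde} to $(A,[\,,\,]_{\ast},R)$: the new product
\[
x\star y := [R(x),y]_{\ast} = R(x)\ast y - y\ast R(x)
\]
endows $A$ with the structure of a left-symmetric (color) algebra, which is exactly the formula stated in the corollary.

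The only point that requires any verification at all is the transfer of the Rota-Baxter property from $(A,\ast)$ to its commutator, and this is a one-line observation since both sides of (\ref{rbi}) are bilinear in the underlying product; every other ingredient, namely Corollary \ref{ta}, the preceding Remark, and Lemma \ref{hde}, is quoted verbatim. Hence I do not anticipate a serious obstacle: the statement is essentially the composition of three prior results, and writing the proof amounts to naming them in the correct order.
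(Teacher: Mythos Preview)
Your argument is correct and follows essentially the same route as the paper: reduce to the associative algebra $(A,\ast)$ via Corollary~\ref{ta}, carry $R$ over via the Remark, and then invoke the standard fact that a weight-$0$ Rota-Baxter operator produces a left-symmetric product $x\star y=R(x)\ast y-y\ast R(x)$. The only cosmetic difference is that the paper cites this last step from \cite{MD} (equivalently Lemma~\ref{rbad}, applied to the associative hence Lie-admissible algebra $(A,\ast)$), while you pass through the commutator Lie algebra and invoke Lemma~\ref{hde}; the two formulations yield the same product and the same conclusion.
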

% \begin{corollary}
%  Let $(A, \dashv, \perp, \vdash, R)$ be a Rota-Baxter trialgebra of weight $q$. Then
% $(A, \prec, \succ, \star)$ is a tridendriform algebra with respect to
% $$x\prec y=x \ast R(y)\quad{and}\quad x\succ y= R(x)\ast y\quad{and}\quad x\star y=qx\ast y.$$
% \end{corollary}
\begin{corollary}
Let $(A, \dashv, \perp, \vdash, R)$ be a Rota-Baxter trialgebra of weight $-1$. Then 
$(A, \star)$ is an associative algebra with 
$$x\star y=R(x)\ast y-y\ast R(x)-x\ast y\quad{and}\quad  x\ast y=x\dashv y+x\vdash y-x\perp y.$$ 
\end{corollary}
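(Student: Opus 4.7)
The strategy is to reduce the claim to Lemma \ref{rba} by passing through the underlying associative structure of the trialgebra. The plan has two parts.

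First, by Corollary \ref{ta}, the binary operation $x\ast y := x\dashv y + x\vdash y - x\perp y$ endows $A$ with the structure of an associative algebra. The preceding Remark then asserts that the operator $R$---Rota-Baxter of weight $-1$ for each of $\dashv$, $\perp$, $\vdash$ separately---is automatically Rota-Baxter of weight $-1$ for this combined product $\ast$. Granting this, $(A,\ast,R)$ becomes a Rota-Baxter associative algebra of weight $-1$, so Lemma \ref{rba} (specialised to trivial grading, $\varepsilon\equiv 1$) applies and yields, with no further work, the desired algebra structure on $A$ under the operation $x\star y = R(x)\ast y - y\ast R(x) - x\ast y$.

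The only genuine verification is the content of the Remark, namely that $R$ satisfies the Rota-Baxter identity of weight $-1$ for $\ast$. This is a pure linearity calculation: expand
\begin{align*}
R(x)\ast R(y) = R(x)\dashv R(y) + R(x)\vdash R(y) - R(x)\perp R(y),
\end{align*}
apply the weight-$(-1)$ Rota-Baxter identity to each of the three resulting terms, and then pull $R$ out of the sum by linearity. Because $\ast$ is the same linear combination $\dashv + \vdash - \perp$ of the three trialgebra products applied to each pair of arguments, the three pieces regroup into $R\bigl(R(x)\ast y + x\ast R(y) - x\ast y\bigr)$, which is precisely the Rota-Baxter identity of weight $-1$ for $\ast$.

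I expect the main---and only---obstacle to be bookkeeping: making sure the signs from the three separate Rota-Baxter identities recombine consistently under the specific linear combination defining $\ast$. There is no conceptual difficulty, and once the Remark is verified the corollary is an immediate consequence of Lemma \ref{rba}.
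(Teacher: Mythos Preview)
Your reduction is exactly the paper's approach: pass to the associative algebra $(A,\ast)$ via Corollary~\ref{ta}, observe through the Remark that $R$ remains Rota-Baxter of weight $-1$ for $\ast$, and then invoke a known result for Rota-Baxter associative algebras. Your verification of the Remark is correct and is the only nontrivial step.

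There is, however, a mismatch at the final citation. Lemma~\ref{rba}, as stated, concludes only that $(A,\star)$ is a \emph{left-symmetric} (color) algebra, not an associative one; compare with the immediately preceding corollary (weight~$0$), which correctly asserts left-symmetry. The paper does not appeal to Lemma~\ref{rba} here but rather to ``some results of~\cite{MD}''. So either the word ``associative'' in the corollary is a slip for ``left-symmetric'' (in which case your argument via Lemma~\ref{rba} is complete and identical in spirit to the paper's), or the relevant statement in~\cite{MD} is genuinely stronger than Lemma~\ref{rba} and you would need to cite it instead. Your route through Lemma~\ref{rba} does not, on its own, establish associativity of $\star$.
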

\subsection{Color Lie triple systems}
\begin{definition}
 A color Lie triple system is a $G$-graded vector space $A$ over a field $\mathbb{K}$ equipped with a bicharacter 
$\varepsilon : G\times G\rightarrow\mathbb{K}^*$ and an even trilinear bracket which satisfies the identity (\ref{lci}), instead of skew-symmetry,
satisfies the conditions
\begin{eqnarray}
 &&\qquad\qquad\qquad[x, y, z]=-\varepsilon(y, z)[x, z, y], \quad(\mbox{\it right}\; \varepsilon\mbox{\it -skew-symmetry})\label{rss}\\
&&\varepsilon(z, x)[x, y, z]+\varepsilon(x, y)[y, z, x]+\varepsilon(y, z)[z, x, y]=0,
\;\;(\mbox{\it ternary}\;\;\varepsilon\mbox{\it -Jacobi identity} )\label{ltsji}
\end{eqnarray}
for each $x, y, z\in\mathcal{H}(A)$.
\end{definition}
\begin{theorem}
 Let $(A, [-, -], \varepsilon)$ be a Lie color algebra. Define the even trilinear map $[-, -, -] : A\otimes A\otimes A\rightarrow A$ by
$$[x, y, z]:=[x, [y, z]]$$
for any $x, y, z\in\mathcal{H}(A)$.
Then $(A, [-, -, -], \varepsilon)$ is a color Lie triple system.
\end{theorem}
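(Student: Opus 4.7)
The plan is to verify the three defining conditions of a color Lie triple system for the trilinear bracket $[x,y,z]:=[x,[y,z]]$, namely the ternary $\varepsilon$-Nambu identity (\ref{lci}), the right $\varepsilon$-skew-symmetry (\ref{rss}), and the ternary $\varepsilon$-Jacobi identity (\ref{ltsji}). The strategy is to reduce each of these to a property of the underlying Lie color bracket, so that essentially nothing new has to be computed.

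First, since every Lie color algebra is in particular a Leibniz color algebra (the $\varepsilon$-skew-symmetric bracket satisfies (\ref{cpa})), Theorem \ref{ll3} applies directly to $(A, [-,-], \varepsilon)$ and yields the ternary $\varepsilon$-Nambu identity for $[x,y,z]=[x,[y,z]]$ without further work. So the most technical axiom is handled by invoking an already established result.

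Next, I would derive the right $\varepsilon$-skew-symmetry from the $\varepsilon$-skew-symmetry (\ref{ss}) of the binary bracket applied to the inner slot:
\begin{eqnarray*}
[x,y,z] = [x,[y,z]] = -\varepsilon(y,z)[x,[z,y]] = -\varepsilon(y,z)[x,z,y],
\end{eqnarray*}
which is exactly (\ref{rss}). Finally, the ternary $\varepsilon$-Jacobi identity (\ref{ltsji}) for the triple bracket $[x,y,z]=[x,[y,z]]$ reads
\begin{eqnarray*}
\varepsilon(z,x)[x,[y,z]] + \varepsilon(x,y)[y,[z,x]] + \varepsilon(y,z)[z,[x,y]] = 0,
\end{eqnarray*}
which is precisely the $\varepsilon$-Jacobi identity (\ref{chli}) for the Lie color algebra $(A,[-,-],\varepsilon)$, so nothing remains to be checked.

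There is no real obstacle here: the Nambu identity is taken care of by Theorem \ref{ll3}, while the two new axioms (right $\varepsilon$-skew-symmetry and ternary $\varepsilon$-Jacobi) translate, via the definition $[x,y,z]=[x,[y,z]]$, into the two defining axioms of the Lie color algebra itself. The only mild subtlety worth flagging explicitly in the write-up is that the right $\varepsilon$-skew-symmetry uses the bicharacter only in the variables $y,z$ (not $x$), because the $\varepsilon$-sign is pulled out through the outer bracket trivially (the scalar $-\varepsilon(y,z)$ is a field element and passes freely across $[x,-]$).
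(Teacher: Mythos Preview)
Your proposal is correct and follows essentially the same approach as the paper's proof: invoke Theorem \ref{ll3} for the ternary $\varepsilon$-Nambu identity, and reduce the right $\varepsilon$-skew-symmetry and the ternary $\varepsilon$-Jacobi identity directly to the two defining axioms (\ref{ss}) and (\ref{chli}) of the Lie color algebra. Your write-up is in fact more explicit than the paper's, which simply declares each step immediate.
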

\begin{proof}
The right skew-symmetry (\ref{rss}) is immediate. The ternary $\varepsilon$-Jacobi identity (\ref{ltsji}) follows from $\varepsilon$-Jacobi
 identity. And Theorem \ref{ll3} completes the proof.
\end{proof}
\begin{corollary}
 Let $(A, \cdot, \varepsilon)$ be an associative color algebra. Then $A$ is a color Lie triple system with respect to the bracket
$$[x, y, z]:=x\cdot (y\cdot z)-\varepsilon(y, z)x\cdot (z\cdot y)
-\varepsilon(x, y+z)(y\cdot z)\cdot x+ \varepsilon (x, y+z)\varepsilon(y, z)(z\cdot y)\cdot x.$$
\end{corollary}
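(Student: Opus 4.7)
The plan is to obtain the corollary as a two-step consequence of results already established in the paper, without any fresh verification of ternary identities. First I would invoke Example \ref{a3c}, which turns the associative color algebra $(A,\cdot,\varepsilon)$ into a Lie color algebra $(A,[-,-],\varepsilon)$ via the $\varepsilon$-commutator bracket
$$[y,z] := y\cdot z - \varepsilon(y,z)\, z\cdot y.$$
Next I would apply the preceding theorem, which says that any Lie color algebra becomes a color Lie triple system under the derived bracket $[x,y,z] := [x,[y,z]]$.

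The only remaining work is an expansion. I would substitute the commutator formula into $[x,[y,z]]$, first writing
$$[x,[y,z]] = x\cdot[y,z] - \varepsilon(x,\, y+z)\,[y,z]\cdot x,$$
using that $[y,z]$ has degree $y+z$. Expanding the inner bracket in each term and distributing $\varepsilon$-factors using the bicharacter axiom $\varepsilon(x,y+z)=\varepsilon(x,y)\varepsilon(x,z)$ (only implicitly, since the final formula keeps $\varepsilon(x,y+z)$ as a single symbol) gives exactly the four-term expression in the statement. All $\varepsilon$-Jacobi and $\varepsilon$-skew-symmetry verifications are already absorbed into the preceding theorem, so nothing ternary needs to be checked directly.

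There is really no obstacle here: the corollary is essentially a definition chase. The one small point to be careful about is the bicharacter bookkeeping in the second summand, making sure that the sign pulled out when moving $[y,z]\cdot x$ past $x$ in the commutator is $\varepsilon(x,y+z)$ and not $\varepsilon(y+z,x)$; the convention in Definition of Lie color algebra and in Example \ref{a3c} fixes this unambiguously. Thus the proof reduces to one line citing Example \ref{a3c} and the immediately preceding theorem, followed by the displayed expansion of $[x,[y,z]]$.
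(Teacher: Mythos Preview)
Your proposal is correct and matches the paper's approach exactly: the corollary is stated in the paper with no proof, as an immediate consequence of Example~\ref{a3c} (associative color algebra $\Rightarrow$ Lie color algebra via the $\varepsilon$-commutator) followed by the preceding theorem (Lie color algebra $\Rightarrow$ color Lie triple system via $[x,y,z]:=[x,[y,z]]$). Your expansion of $[x,[y,z]]$ into the four displayed terms is correct, and the bicharacter bookkeeping you flag is handled properly.
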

Next we introduce color Jordan triple systems and study their connection with color Lie triple systems.
\begin{definition}
 A color Jordan triple system is a triple $(J, [-, -, -], \varepsilon)$ in which
 $J$ is a $G$-graded vector space over a field $\mathbb{K}$, $\varepsilon : G\times G\rightarrow\mathbb{K}^*$ is a bicharacter and 
$[-, -, -] : A\otimes A\otimes A\rightarrow A$ is an even trilinear map satisfying
\begin{eqnarray}
 [x, y, z]&=&\varepsilon(x, y)\varepsilon(x, z)\varepsilon(y, z)[z, y, x]\quad\mbox{\it (outer-}\varepsilon\mbox{\it -symmetry}) \label{oss}
\end{eqnarray}
and {\it color Jordan triple identity}
\begin{eqnarray}
{[[x, y, z], t, u]}&=&[x, y, [z, t, u]]-\varepsilon(z, t+u)\varepsilon(t, u)[x, [y, u, t], z]+\varepsilon(y+z, t+u)[[x, t, u], y, z],
\label{jts}
\end{eqnarray}
for any $x, y, z\in\mathcal{H}(J)$.
\end{definition}
\begin{example}
 Let $(A, \cdot, \varepsilon)$ be an associative color algebra. Then $(A, [-, -, -], \varepsilon)$ is a color Jordan triple system with respect
to the triple product
$$[x, y, z]:=x\cdot y\cdot z+\varepsilon(x, y)\varepsilon(x, z)\varepsilon(y, z)z\cdot y\cdot x.$$
\end{example}
\begin{example}
Let $(A, \cdot, \varepsilon)$ be an associative color algebra and $\theta : A\rightarrow A$ be an even linear map on $A$ satisfying $\theta^2=Id_A$ and 
$\theta(x\cdot y)=\varepsilon(x, y)\theta(y)\cdot\theta(x)$ for any $x, y\in \mathcal{H}(A)$. Then $(A, [-, -, -], \varepsilon)$ is a color Jordan
triple system with the triple product
$$[x, y, z]:=x\cdot\theta(y)\cdot z+\varepsilon(x, y)\varepsilon(x, z)\varepsilon(y, z)z\cdot\theta(y)\cdot x.$$
\end{example}
We have the following result.
\begin{theorem}
 Let $(J, [-, -, -], \varepsilon)$ be a color Jordan triple system. Define the triple product 
$$\{x, y, z\}:=[x, y, z]-\varepsilon(y, z)[x, z, y]$$
for any $x, y, z\in\mathcal{H}(J)$. Then $L(J)=(J, \{-, -, -\}, \varepsilon)$ is a color Lie triple system.
\end{theorem}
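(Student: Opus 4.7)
My plan is to verify, in turn, the three defining axioms of a color Lie triple system---right $\varepsilon$-skew-symmetry (\ref{rss}), the ternary $\varepsilon$-Jacobi identity (\ref{ltsji}), and the ternary $\varepsilon$-Nambu identity (\ref{lci})---for the antisymmetrised bracket $\{x,y,z\}=[x,y,z]-\varepsilon(y,z)[x,z,y]$.

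The right $\varepsilon$-skew-symmetry is immediate from the definition: expanding $-\varepsilon(y,z)\{x,z,y\}=-\varepsilon(y,z)[x,z,y]+\varepsilon(y,z)\varepsilon(z,y)[x,y,z]$ and applying $\varepsilon(y,z)\varepsilon(z,y)=1$ returns $\{x,y,z\}$. For the ternary $\varepsilon$-Jacobi identity I expand the cyclic sum $\varepsilon(z,x)\{x,y,z\}+\varepsilon(x,y)\{y,z,x\}+\varepsilon(y,z)\{z,x,y\}$ into six $[-,-,-]$-terms. The three ``unswapped'' terms give $\varepsilon(z,x)[x,y,z]+\varepsilon(x,y)[y,z,x]+\varepsilon(y,z)[z,x,y]$, and each of the three ``swapped'' terms, upon applying the outer $\varepsilon$-symmetry (\ref{oss}) (e.g.\ $[x,z,y]=\varepsilon(x,z)\varepsilon(x,y)\varepsilon(z,y)[y,z,x]$) and collapsing the bicharacter factors via $\varepsilon(a,b)\varepsilon(b,a)=1$, reduces to exactly the negative of one of those three. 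The six terms therefore cancel pairwise.

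The principal obstacle is the ternary $\varepsilon$-Nambu identity. Unfolding $\{\{x,y,z\},t,u\}$ by the definition of $\{-,-,-\}$ produces four iterated Jordan brackets of the form $[[x,a,b],c,d]$ with suitable $\varepsilon$-prefactors, where $(a,b)\in\{(y,z),(z,y)\}$ and $(c,d)\in\{(t,u),(u,t)\}$. Applying the color Jordan triple identity (\ref{jts}) to each then yields twelve $[-,-,-]$-terms in which the inner Jordan bracket sits in one of the three inner slots. On the other hand, the right-hand side
\[
\{x,y,\{z,t,u\}\}+\varepsilon(z,t+u)\{x,\{y,t,u\},z\}+\varepsilon(y+z,t+u)\{\{x,t,u\},y,z\}
\]
unfolds into twelve terms of precisely the same shape after the two layers of $\{-,-,-\}$-expansion. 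The comparison is then a term-by-term matching. The key technical observation is that the ``twisted'' middle piece $-\varepsilon(z,t+u)\varepsilon(t,u)[x,[y,u,t],z]$ coming from (\ref{jts}) applied to $[[x,y,z],t,u]$, together with its $t\leftrightarrow u$ counterpart coming from $-\varepsilon(t,u)[[x,y,z],u,t]$, reassembles exactly into $\varepsilon(z,t+u)\{x,\{y,t,u\},z\}$ on the right, after using $\varepsilon(t,u)\varepsilon(u,t)=1$ and $\varepsilon(z,t+u)=\varepsilon(z,u+t)$; the remaining ten pairs match in the same way.

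The calculation in step three is lengthy but entirely algorithmic: the only inputs used are the outer $\varepsilon$-symmetry (\ref{oss}), the color Jordan triple identity (\ref{jts}), and the bicharacter axioms. The real difficulty is bookkeeping rather than anything conceptual, so the cleanest exposition is to group the twelve-term expansions on each side by the position of the innermost Jordan bracket, then verify the coincidence of the corresponding $\varepsilon$-coefficients pair by pair.
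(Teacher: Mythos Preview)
Your proposal is correct and follows essentially the same route as the paper: the right $\varepsilon$-skew-symmetry is immediate, the ternary $\varepsilon$-Jacobi identity is obtained from the outer $\varepsilon$-symmetry (\ref{oss}), and the ternary $\varepsilon$-Nambu identity is verified by expanding $\{\{x,y,z\},t,u\}$ and the three right-hand pieces into iterated Jordan brackets and applying (\ref{jts}) to match terms. Your write-up is somewhat more explicit about the bookkeeping (the twelve-term count and the pairwise matching of $\varepsilon$-coefficients), but the underlying argument is the same.
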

\begin{proof}
The {\it right $\varepsilon$-skew-symmetry} is immediate. The {\it ternary $\varepsilon$-Jacobi identity} follows from (\ref{oss}). 
It remains to check the {\it ternary $\varepsilon$-Nambu identity} (\ref{lci}) for $\{-, -, -\}$. For any $x, y, z\in\mathcal{H}(J)$, we have
\begin{eqnarray}
 \{\{x, y, z\}, t, u\}
&=&[\{x, y, z\}, t, u]-\varepsilon(t, u)[\{x, y, z\}, u, t]\nonumber\\
&=&[([x, y, z]-\varepsilon(y, z)[x, z, y]), t, u]-\varepsilon(t, u)[([x, y, z]-\varepsilon(y, z)[x, z, y]), u, t]\nonumber\\
&=&[[x, y, z], t, u]-\varepsilon(y, z)[[x, z, y], t, u]-\varepsilon(t, u)[[x, y, z], u, t]
+\varepsilon(t, u)\varepsilon(y, z)[[x, z, y], u, t].\nonumber
\end{eqnarray}
Similarly,
\begin{eqnarray}
 \{x, y, \{z, t, u\}\}&=& [x, y, [z, t, u]]-\varepsilon(t, u)[x, y, [z, u, t]]-\varepsilon(y, z+t+u)[x, [z, t, u], y]
+\varepsilon(y, z+t+u)\varepsilon(t, u),\nonumber\\
\{x, \{y, t, u\}, z\}&=&[x, [y, t, u], z]-\varepsilon(t, u)[x, [y, u, t], z]-\varepsilon(y+t+u, z)[x, z, [y, t, u]]\nonumber\\
&&+\varepsilon(y+t+u, z)\varepsilon(t, u)[x, z, [y, u, t]],\nonumber\\
\{\{x, t, u\}, t, u\}&=&[[x, t, u], y, z]-\varepsilon(t, u)[[x, u, t], y, z]-\varepsilon(y, z)[[x, t, u], z, y]
+\varepsilon(y, z)\varepsilon(t, u)[[x, u, t], z, y]\nonumber.
\end{eqnarray}
Then, using axiom (\ref{jts}), {\it ternary $\varepsilon$-Nambu identity} (\ref{lci}) holds for the bracket $\{-, -, -\}$.
\end{proof}

\section{Ternary Leibniz-Nambu-Poisson color algebras}
We introduce non-commutative ternary Leibniz-Nambu-Poisson color algebras and give some procedures of construction.

\begin{definition}
 A non-commutative ternary Leibniz-Nambu-Poisson color algebra is a quadruple $(A, \cdot, [-, -, -], \varepsilon)$ in which
\begin{enumerate}
 \item [1)] $(A, \cdot, \varepsilon)$ is an associative color algebra,
\item [2)] $(A, [-, -, -], \varepsilon)$ is a ternary Leibniz color algebra,
\item [3)] and the following {\it right ternary Leibniz rule }
\begin{eqnarray}
 [x\cdot y, z, t]=x\cdot [y, z, t]+\varepsilon(y, z+t)[x, z, t]\cdot y \label{cal3} 
\end{eqnarray}
holds for any $x, y, z, t\in \mathcal{H}(A)$.
\end{enumerate}
If in addition, the product $\cdot$ is $\varepsilon$-commutative, then the non-commutative ternary Leibniz-Nambu-Poisson color algebra is said to
 be commutative. Moreover, if the trilinear map $[-, -, -]$ is $\varepsilon$-skew-symmetric for any pair of variables, then 
$(A, \cdot, [-, -, -], \varepsilon)$ is called a ternary Nambu-Poisson color algebra, see \cite{MA} for trivial grading. 
\end{definition}
\begin{example}
 If $(A, \cdot, [-, -, -], \varepsilon)$ is a non-commutative ternary Leibniz-Nambu-Poisson color  algebra, then
$(A, \cdot^{op}, [-, -, -], \varepsilon)$ is also a non-commutative ternary Leibniz-Nambu-Poisson color  algebra, where $\cdot^{op} : A\otimes A
\rightarrow A, x\otimes y \mapsto\varepsilon(x, y)y\cdot x$.
\end{example}
The following proposition asserts that the direct sum of two non-commutative ternary  Leibniz-Nambu-Poisson color  algebras are also a
non-commutative ternary Leibniz-Nambu-Poisson color  algebra with componentwise operation.
\begin{proposition}
 Let  $(A, \cdot, [-, -, -], \varepsilon)$ and $(A', \cdot', [-, -, -]', \varepsilon)$ be two non-commutative ternary Leibniz-Nambu-Poisson color 
 algebras. Then $A\oplus A'$ is a non-commutative ternary Leibniz-Nambu-Poisson color  algebra with respect to the operations :
\begin{eqnarray}
 (x\oplus x')\ast(y\oplus y')&:=&(x\cdot y)\oplus (x'\cdot'y')\nonumber\\
\{x\oplus x', y\oplus y', z\oplus z'\}&:=&[x, y, z]\oplus[x', y', z']'\nonumber,
\end{eqnarray}
for any $x, y, z\in \mathcal{H}(A)$ and $x', y', z'\in \mathcal{H}(A')$.
\end{proposition}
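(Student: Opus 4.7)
The plan is a componentwise verification: since the group $G$ and the bicharacter $\varepsilon$ are shared by both summands, the direct sum inherits a natural $G$-grading by $(A\oplus A')_a=A_a\oplus A'_a$, and a homogeneous element of $A\oplus A'$ of degree $a$ has both components homogeneous of degree $a$ (or zero). All three defining axioms of a non-commutative ternary Leibniz-Nambu-Poisson color algebra then split along the direct sum, so each reduces to the corresponding axiom in $A$ paired with the one in $A'$.

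First I would check that $(A\oplus A',\ast,\varepsilon)$ is an associative color algebra: for homogeneous $x\oplus x'$, $y\oplus y'$, $z\oplus z'$, we have $((x\oplus x')\ast(y\oplus y'))\ast(z\oplus z')=((x\cdot y)\cdot z)\oplus((x'\cdot' y')\cdot' z')$, and by associativity in each factor this equals $(x\oplus x')\ast((y\oplus y')\ast(z\oplus z'))$. Degree is preserved because $\ast$ acts componentwise and each of $\cdot$ and $\cdot'$ does.

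Next I would verify the ternary $\varepsilon$-Nambu identity (\ref{lci}) for the bracket $\{-,-,-\}$. Expanding $\{\{x\oplus x',y\oplus y',z\oplus z'\},t\oplus t',u\oplus u'\}=[[x,y,z],t,u]\oplus[[x',y',z'],t',u']'$, and likewise expanding the three right-hand terms, the identity in $A\oplus A'$ becomes the pair of ternary $\varepsilon$-Nambu identities, one in each component. Because the scalar coefficients $\varepsilon(z,t+u)$, $\varepsilon(y+z,t+u)$ depend only on the common degrees of the homogeneous elements (not on the algebra they live in), the same scalars appear in both components and the two identities hold by assumption on $A$ and $A'$ separately.

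Finally I would check the right ternary Leibniz rule (\ref{cal3}): the left-hand side $\{(x\oplus x')\ast(y\oplus y'),z\oplus z',t\oplus t'\}$ equals $[x\cdot y,z,t]\oplus[x'\cdot' y',z',t']'$, while the right-hand side equals $(x\cdot[y,z,t])\oplus(x'\cdot'[y',z',t']')+\varepsilon(y,z+t)\big([x,z,t]\cdot y\oplus[x',z',t']'\cdot' y'\big)$, again with a single shared scalar $\varepsilon(y,z+t)$. Equality in each component is exactly (\ref{cal3}) for $A$ and $A'$. There is no real obstacle here beyond keeping track of the grading and noting that the bicharacter coefficients are common to both summands; the only subtle point worth highlighting is that the homogeneity convention on $A\oplus A'$ must be the one above, so that $\varepsilon(y\oplus y',\ldots)=\varepsilon(\deg y,\ldots)$ is unambiguous.
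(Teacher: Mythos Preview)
Your proposal is correct and follows exactly the approach the paper takes: the paper's own proof consists of the single sentence ``It is straightforward,'' and your write-up is simply a careful unpacking of that componentwise verification. The one point you flag --- that both summands share the same $G$-grading and bicharacter so that the $\varepsilon$-coefficients match across components --- is indeed the only thing one needs to be mindful of.
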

\begin{proof}
It is straightforward.
\end{proof}

The below theorem states that the tensor product of non-commutative ternary Leibniz-Nambu-Poisson color algebra by itself leads to
 non-commutative Leibniz-Poisson color algebra.
\begin{theorem}\label{llp}
 Let $(A, \cdot, [-, -, -], \varepsilon)$ be a non-commutative ternary Leibniz-Nambu-Poisson color  algebra. Then $A\otimes A$ is endowed with a 
non-commutative Leibniz-Poisson color algebra structure for the structure maps defined by
\begin{eqnarray}
 (x\otimes y)\cdot(x'\otimes y')&:=&\varepsilon(y, x')(x\cdot x')\otimes (y\cdot y')\nonumber\\
{[x\otimes y, x'\otimes y']}&:=&x\otimes[y, x', y']+\varepsilon(y, x'+y')[x, x', y']\otimes y.\nonumber
\end{eqnarray}
\end{theorem}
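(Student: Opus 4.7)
The plan is to verify the three axioms of a non-commutative Leibniz-Poisson color algebra on $A\otimes A$: associativity of the product, the Leibniz identity for the bracket, and the compatibility (right Leibniz) identity linking the two. Throughout, computations will be carried out on homogeneous generators of $A\otimes A$ and extended by bilinearity; I will freely use that $\varepsilon$ is a bicharacter so that factors like $\varepsilon(a+b,c)$ split as $\varepsilon(a,c)\varepsilon(b,c)$.

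First I would dispatch the associativity of $\cdot$ on $A\otimes A$. The product $(x\otimes y)\cdot(x'\otimes y')=\varepsilon(y,x')(x\cdot x')\otimes(y\cdot y')$ is the standard braided tensor product of the associative color algebra $(A,\cdot,\varepsilon)$ with itself; associativity reduces to associativity of $\cdot$ in $A$ together with the cocycle identity $\varepsilon(y,x'+x'')\varepsilon(y',x'')=\varepsilon(y+y',x'')\varepsilon(y,x')$, which is immediate from the bicharacter axioms. This step is essentially routine.

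The main technical obstacle is the Leibniz identity
\[
[[u,v],w]=[u,[v,w]]+\varepsilon(v,w)\,[[u,w],v]
\]
for $u=x\otimes y$, $v=x'\otimes y'$, $w=x''\otimes y''$. Expanding the outer brackets via the defining formula produces four tensor terms on each side, of type $x\otimes(\cdots)$, $[x,\cdot,\cdot]\otimes(\cdots)$, $(\cdots)\otimes[y,\cdot,\cdot]$ and $[[x,\cdot,\cdot],\cdot,\cdot]\otimes y$. The key is to apply the ternary $\varepsilon$-Nambu identity (\ref{lci}) inside $A$ to the first-slot factors $[[a,b,c],d,e]$ appearing on the left-hand side, both for $a=y$ in the first tensor factor and for the $x$-slot in the fourth tensor factor; this produces three subterms each, and I match them one by one against the four terms of $[u,[v,w]]$ and the four terms of $\varepsilon(v,w)[[u,w],v]$. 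The bookkeeping of $\varepsilon$-factors is delicate but closes: for instance, the cross terms carrying $[x,x'',y'']\otimes[y,x',y']$ (or its reversed variant) require the cancellation $\varepsilon(b+c,d+e)\varepsilon(d+e,b+c)=1$, and the terms of type $\varepsilon(\cdot,\cdot)[[x,d,e],b,c]\otimes y$ use $\varepsilon(a,b+c+d+e)=\varepsilon(a,b+c)\varepsilon(a,d+e)$. I expect this to be the lengthiest part of the argument.

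Finally, the right ternary Leibniz compatibility
\[
[u\cdot v,w]=u\cdot[v,w]+\varepsilon(v,w)\,[u,w]\cdot v
\]
is handled by expanding $[u\cdot v,w]$ and applying the right ternary Leibniz rule (\ref{cal3}) in both tensor slots of $A$, namely to $[y\cdot y',x'',y'']$ and $[x\cdot x',x'',y'']$. The resulting four terms split naturally into the two pieces of $u\cdot[v,w]$ (originating from the derivation term $y\cdot[y',x'',y'']$ and $x\cdot[x',x'',y'']$) and the two pieces of $\varepsilon(v,w)[u,w]\cdot v$ (originating from $[y,x'',y'']\cdot y'$ and $[x,x'',y'']\cdot x'$). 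Matching these requires only repeated use of the bicharacter identities, in particular the telescoping $\varepsilon(x',x''+y'')\varepsilon(x''+y'',x')=1$, and no further structural input from $A$. Once all three axioms are verified, the conclusion follows.
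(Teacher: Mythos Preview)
Your proposal is correct and coincides with the paper's approach: the paper's proof consists of the single sentence ``It follows from a direct computation,'' and what you have outlined is precisely that computation, with the Leibniz identity on $A\otimes A$ reducing to two instances of the ternary $\varepsilon$-Nambu identity (\ref{lci}) and the compatibility axiom reducing to two instances of the right ternary Leibniz rule (\ref{cal3}). Your bookkeeping of the $\varepsilon$-factors and the term-matching are accurate.
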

\begin{proof}
 It follows from a direct computation.
\end{proof}

The following results affirm that one can get non-commutative ternary Leibniz-Nambu-Poisson color algebras from non-commutative Leibniz-Poisson 
color algebras.
\begin{theorem}\label{ll}
 Let $(A, \cdot, [-, -], \varepsilon)$ be a non-commutative Leibniz-Poisson color algebra. Then \\ $(A, \cdot, [-, [-, -]], \varepsilon)$ is a non-commutative
 ternary Leibniz-Nambu-Poisson color algebra.
\end{theorem}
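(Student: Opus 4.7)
The plan is to verify the three defining axioms of a non-commutative ternary Leibniz-Nambu-Poisson color algebra for the triple $(A,\cdot,[-,[-,-]],\varepsilon)$. The first axiom, that $(A,\cdot,\varepsilon)$ is associative, is given as part of the hypothesis. The second axiom, that $(A,[-,[-,-]],\varepsilon)$ is a ternary Leibniz color algebra, is immediate from Theorem \ref{ll3} applied to the binary Leibniz color algebra $(A,[-,-],\varepsilon)$ underlying the Leibniz-Poisson structure.

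All the work therefore concentrates on the third axiom, the right ternary Leibniz rule \eqref{cal3}. For $x,y,z,t\in\mathcal{H}(A)$, I would set $w:=[z,t]$, which is homogeneous of degree $|z|+|t|$ since the bracket is even. Applied with this $w$, the right Leibniz identity \eqref{comp} of the binary Leibniz-Poisson structure reads
\begin{eqnarray}
[x\cdot y,w] \;=\; x\cdot[y,w]+\varepsilon(y,w)\,[x,w]\cdot y,\nonumber
\end{eqnarray}
and since $\varepsilon(y,w)=\varepsilon(y,z+t)$, substituting $w=[z,t]$ immediately gives
\begin{eqnarray}
[x\cdot y,[z,t]] \;=\; x\cdot[y,[z,t]]+\varepsilon(y,z+t)\,[x,[z,t]]\cdot y,\nonumber
\end{eqnarray}
which is precisely \eqref{cal3} rewritten in terms of the ternary bracket $[x,y,z]:=[x,[y,z]]$.

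There is essentially no obstacle: the proof is a one-line substitution, and the only thing to be careful about is noting that $[z,t]$ is homogeneous of degree $|z|+|t|$ so that the bicharacter factor $\varepsilon(y,[z,t])$ indeed equals $\varepsilon(y,z+t)$ as required by \eqref{cal3}. Since the remaining two axioms are already packaged in the hypothesis and in Theorem \ref{ll3}, this completes the verification.
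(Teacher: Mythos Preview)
Your proposal is correct and follows essentially the same approach as the paper: both note that associativity is given, invoke Theorem~\ref{ll3} for the ternary Leibniz structure, and then obtain the right ternary Leibniz rule \eqref{cal3} by applying the binary right Leibniz identity \eqref{comp} with the homogeneous element $[z,t]$ in the third slot. Your observation that $[z,t]$ has degree $|z|+|t|$, so that $\varepsilon(y,[z,t])=\varepsilon(y,z+t)$, is the one small point the paper leaves implicit.
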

\begin{proof}
 It is clear that $(A, \cdot, \varepsilon)$ is an associative color algebra and $(A, [-, [-, -]])$ is a ternary Leibniz color algebra, thanks to
Theorem \ref{ll3}. We now need to prove the compatibility condition (\ref{cal3}). For any $x, y, z, t\in \mathcal{H}(A)$, we have
\begin{eqnarray}
 [x\cdot y, z, t]
&=&[x\cdot y, [z, t]]\nonumber\\
&=&x\cdot [y, [z, t]]+\varepsilon(y, z+t)[x, [z, t]]\cdot y\quad (\mbox{by}\quad (\ref{comp}))\nonumber\\
&=&x\cdot [y, z, t]+\varepsilon(y, z+t)[x, z, t]\cdot y.\nonumber
\end{eqnarray}
This ends the proof.
\end{proof}

\begin{corollary}
Let $(A, \cdot, \varepsilon)$ be an associative color algebra. Define the even bilinear and trilinear maps 
$[-, -] : A^{\otimes2}\rightarrow A$ and $[-, -, -] : A^{\otimes3}\rightarrow A$ respectively by
$$[x, y]:=x\cdot y-\varepsilon(x, y)y\cdot x \quad\mbox{and}\quad [x, y, z]:=[x, [y, z]]$$
Then $(A, \cdot, [-, -, -], \varepsilon)$ is a non-commutative ternary Leibniz-Nambu-Poisson color algebra.
\end{corollary}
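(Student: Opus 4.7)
The plan is to obtain this corollary as a direct composition of two results already established in the paper, with no new computation required.

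First I would invoke Proposition \ref{laa2}: starting from the associative color algebra $(A, \cdot, \varepsilon)$, the $\varepsilon$-commutator $[x,y] = x\cdot y - \varepsilon(x, y)\, y \cdot x$ equips $A$ with the structure of a non-commutative Leibniz-Poisson color algebra $(A, \cdot, [-, -], \varepsilon)$. This simultaneously gives the three defining ingredients: associativity of $\cdot$, the Leibniz color identity (\ref{cpa}) for $[-, -]$, and the right Leibniz compatibility (\ref{comp}) between $\cdot$ and $[-, -]$.

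Next I would apply Theorem \ref{ll} to this non-commutative Leibniz-Poisson color algebra. That theorem immediately yields a non-commutative ternary Leibniz-Nambu-Poisson color algebra $(A, \cdot, [-, [-, -]], \varepsilon)$, whose trilinear bracket is precisely the one defined in the statement, since $[x, y, z] := [x, [y, z]]$ matches the iterated commutator produced by Theorem \ref{ll}.

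I do not expect any substantive obstacle here, since both ingredients already appear in the paper and the corollary is simply their composition. The only step that would need to be made explicit in a fully written proof is the identification of the bracket $[x, y, z]$ in the corollary's statement with the iterated $\varepsilon$-commutator $[x, [y, z]] = x\cdot(y\cdot z - \varepsilon(y, z)\, z\cdot y) - \varepsilon(x, y+z)(y\cdot z - \varepsilon(y, z)\, z\cdot y)\cdot x$, which is immediate from the definitions and coincides with the ternary bracket obtained via Example \ref{a3c} and Theorem \ref{ll3}, in agreement with the observation made right after the preceding theorem.
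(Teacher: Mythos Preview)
Your proposal is correct and matches the paper's intended argument: the corollary is placed immediately after Theorem \ref{ll} and is meant to follow by combining Proposition \ref{laa2} (associative color algebra $\Rightarrow$ non-commutative Leibniz-Poisson color algebra via the $\varepsilon$-commutator) with Theorem \ref{ll} (Leibniz-Poisson color algebra $\Rightarrow$ ternary Leibniz-Nambu-Poisson color algebra via $[x,[y,z]]$). The paper gives no separate proof, so your two-step composition is exactly what is expected.
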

\begin{corollary}
 Let $(A, \cdot, [-, -])$ be a non-commutative Leibniz-Poisson color algebra. Then $A\otimes A$ is also a non-commutative Leibniz-Poisson algebra
with respect to the operations 
\begin{eqnarray}
 (x\otimes y)\cdot(x'\otimes y')&:=&\varepsilon(y, x')(x\cdot x')\otimes (y\cdot y')\nonumber\\
{[x\otimes y, x'\otimes y']}&:=&x\otimes[y, [x', y']]+\varepsilon(y, x'+y')[x, [x', y']]\otimes y.\nonumber
\end{eqnarray}
\end{corollary}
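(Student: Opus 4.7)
The strategy is to obtain the assertion as an immediate composition of Theorem \ref{ll} followed by Theorem \ref{llp}. First I would invoke Theorem \ref{ll} on the given non-commutative Leibniz-Poisson color algebra $(A, \cdot, [-, -], \varepsilon)$, which furnishes a non-commutative ternary Leibniz-Nambu-Poisson color algebra structure $(A, \cdot, [-, -, -], \varepsilon)$ with the same associative product and ternary bracket
$$[x, y, z] := [x, [y, z]], \qquad x, y, z \in \mathcal{H}(A).$$

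Next, I would apply Theorem \ref{llp} to this ternary Leibniz-Nambu-Poisson color algebra. That theorem endows $A \otimes A$ with the product
$$(x\otimes y)\cdot(x'\otimes y') = \varepsilon(y, x')(x\cdot x')\otimes(y\cdot y')$$
and the bracket
$$[x\otimes y, x'\otimes y'] = x\otimes[y, x', y'] + \varepsilon(y, x'+y')[x, x', y']\otimes y,$$
and asserts that the resulting quadruple is a non-commutative Leibniz-Poisson color algebra. Substituting the definition $[a, b, c] = [a, [b, c]]$ of the ternary bracket into the right-hand side immediately yields the two structure formulas stated in the corollary.

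There is no real obstacle: the entire content of the corollary is already packaged inside the two prior theorems. The only verification needed is the cosmetic one that $[y, [x', y']]$ and $[x, [x', y']]$ are indeed the values of the ternary bracket built from $[-, -]$ via Theorem \ref{ll}, and that the bicharacter weights $\varepsilon(y, x')$ and $\varepsilon(y, x'+y')$ in the statement agree with those produced by Theorem \ref{llp}. Both checks are immediate from the definitions, so the proof reduces to the single sentence: apply Theorem \ref{ll}, then Theorem \ref{llp}.
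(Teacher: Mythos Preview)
Your proposal is correct and matches the paper's own proof, which consists of the single line ``It follows from Theorem \ref{llp} and Theorem \ref{ll}.'' The composition you describe (first Theorem \ref{ll} to pass to the ternary structure, then Theorem \ref{llp} to obtain the Leibniz-Poisson structure on $A\otimes A$) is exactly the intended argument.
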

\begin{proof}
 It follows from Theorem \ref{llp} and Theorem \ref{ll}. 
\end{proof}

% \begin{corollary}
%  Let $(A, \cdot, \varepsilon)$ be an associative color algebra and $\alpha : A\rightarrow A$ an averaging operator on $A$.
% Then $(A, \cdot , [-, -, -], \varepsilon)$ is a Leibniz-Poisson color  $3$-algebra with
% $[x, y, z]:=\varepsilon(x, y+z)(y\cdot\alpha(z)-\varepsilon(y, z)\alpha(z)\cdot y)\cdot x.$
% \end{corollary}
% \begin{proof}
%  It follows essentially from (\cite{}, Theorem ...).
% \end{proof}
It is proved in (\cite{JMC}, Lemma 2.8) that every non-commutative Leibniz-Poisson algebra gives rise to ternary Leibniz algebra. 
In the below theorem, we extend this result the non-commutative Leibniz-Nambu-Poisson color algebra case.
\begin{theorem}
 Let $(A, \cdot, [-, -], \varepsilon)$ be a non-commutative Leibniz-Poisson color algebra, then\\
$(A, \cdot, \{-, -, -\}, \varepsilon)$ is a non-commutative ternary Leibniz-Nambu-Poisson color algebra, with
$$\{x, y, z\}:=[x, y\cdot z]$$
for  any $x, y, z\in \mathcal{H}(A)$.
\end{theorem}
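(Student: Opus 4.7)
The plan is to unfold the definition $\{x,y,z\}=[x,y\cdot z]$ and reduce every required ternary axiom to two binary identities already available for a non-commutative Leibniz--Poisson color algebra, namely the binary Leibniz identity (\ref{cpa}) and the right Leibniz compatibility (\ref{comp}). Since the associative color algebra structure on $(A,\cdot,\varepsilon)$ is part of the data, only two things remain to verify: the ternary $\varepsilon$-Nambu identity (\ref{lci}) for $\{-,-,-\}$, and the right ternary Leibniz rule (\ref{cal3}).

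For the right ternary Leibniz rule, I would simply compute
\begin{eqnarray*}
\{x\cdot y,z,t\}=[x\cdot y,\,z\cdot t]
=x\cdot[y,z\cdot t]+\varepsilon(y,z+t)[x,z\cdot t]\cdot y
=x\cdot\{y,z,t\}+\varepsilon(y,z+t)\{x,z,t\}\cdot y,
\end{eqnarray*}
where the middle equality is a single application of (\ref{comp}) with $z$ replaced by $z\cdot t$, using additivity of the degree so that $\varepsilon(y,z\cdot t)=\varepsilon(y,z+t)$.

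For the ternary $\varepsilon$-Nambu identity, the key step is to apply (\ref{cpa}) once and then (\ref{comp}) once. First I would write
\begin{eqnarray*}
\{\{x,y,z\},t,u\}=[[x,y\cdot z],\,t\cdot u]
=[x,\,[y\cdot z,\,t\cdot u]]+\varepsilon(y+z,t+u)[[x,t\cdot u],\,y\cdot z],
\end{eqnarray*}
where the second summand is already $\varepsilon(y+z,t+u)\{\{x,t,u\},y,z\}$. Then I would expand the inner bracket $[y\cdot z,\,t\cdot u]$ by (\ref{comp}) as $y\cdot[z,t\cdot u]+\varepsilon(z,t+u)[y,t\cdot u]\cdot z$ and distribute $[x,-]$ to land exactly on $\{x,y,\{z,t,u\}\}+\varepsilon(z,t+u)\{x,\{y,t,u\},z\}$.

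There is no real obstacle here; the only point requiring minor care is the bookkeeping of the bicharacter. In particular one must check that the degree of the compound element $z\cdot t$ (respectively $t\cdot u$, $y\cdot z$) equals $z+t$ (respectively $t+u$, $y+z$) when substituted into $\varepsilon$, and that the degree-shift in passing from $\{x,y,z\}$ to $[x,y\cdot z]$ is consistent so that the color signs produced by (\ref{cpa}) and (\ref{comp}) match those appearing in (\ref{lci}) verbatim. Once these $\varepsilon$-degrees are tracked correctly, the two displayed computations combine to give exactly the ternary $\varepsilon$-Nambu identity, completing the proof.
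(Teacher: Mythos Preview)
Your proof is correct and follows essentially the same approach as the paper: both arguments reduce the ternary $\varepsilon$-Nambu identity to one use of the binary Leibniz identity (\ref{cpa}) applied to $[[x,y\cdot z],t\cdot u]$ together with one use of the compatibility (\ref{comp}) applied to $[y\cdot z,t\cdot u]$, and both reduce the right ternary Leibniz rule to a single application of (\ref{comp}) with $z\cdot t$ in the third slot. The only cosmetic difference is that the paper computes the difference of the two sides and shows it is zero (applying (\ref{comp}) first to combine terms, then (\ref{cpa}) to finish), whereas you expand forward (applying (\ref{cpa}) first, then (\ref{comp})); the computations are the same read in opposite directions.
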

\begin{proof}
We only need to prove {\it ternary $\varepsilon$-Nambu identity} (\ref{lci}) and {\it right ternary Leibniz} rule  (\ref{cal3}). Then,
 for any $x, y, z, t, u\in \mathcal{H}(A)$, one has :
\begin{eqnarray}
 &&\qquad\{\{x, y, z\}, t, u\}-\{x, y, \{z, t, u\}\}-\varepsilon(z, t+u)\{x, \{y, t, u\}, z\}-\varepsilon(y+z, t+u)\{\{x, t, u\}, y, z\}=\nonumber\\
 &&={[[x, y\cdot z], t\cdot u]]}-[x, y\cdot [z, t\cdot u]]-\varepsilon(z, t+u)[x, [y, t\cdot u]\cdot z]
-\varepsilon(y+z, t+u)[[x, t\cdot u], y\cdot z]\nonumber\\
&&={[x, y\cdot z], t\cdot u]}-[x, y\cdot [z, t\cdot u]-\varepsilon(z, t+u) [y, t\cdot u]\cdot z]
-\varepsilon(y+z, t+u)[[x, t\cdot u], y\cdot z]\nonumber\\
&&={[[x, y\cdot z], t\cdot u]}-[x, [y\cdot z, t\cdot u]-\varepsilon(y+z, t+u)[[x, t\cdot u], y\cdot z].\nonumber
\end{eqnarray}
The last line vanishes thanks to the {\it right Leibniz} identity (\ref{cpa}).
Next,
\begin{eqnarray}
&&\qquad \{x\cdot y, z, t\}-x\cdot \{y, z, t\}-\varepsilon(y, z+t)\{x, z, t\}\cdot y=\nonumber\\
&&={[x\cdot y, z\cdot t]}-x\cdot [y, z\cdot t]+\varepsilon(y, z+t)[x, z\cdot t]\cdot y\nonumber,
\end{eqnarray}
which also vanishes by (\ref{cpa}).
\end{proof}

In what follows we introduce and give constructions of modules over non-commutative ternary Leibniz-Nambu-Poisson color algebras.

First, we need the below definition.
\begin{definition}

% \end{definition}
% \begin{definition}
Let $(A,\cdot, \varepsilon)$ be an associative color algebra.
   An $A$-bimodule is  a $G$-graded vector space  $M$  together with two even
 bilinear maps $\prec M\otimes A\longrightarrow M$ and $\succ :A\otimes M\longrightarrow M$ called structure maps such that
\begin{eqnarray}
x\succ(y\succ m)&=&(x\cdot y)\succ m,\label{mha3}\\
 (m\succ x)\prec y&=&m\prec(x\cdot y),\\
x\succ (m\prec y)&=&(x\succ m)\prec y,\label{mha5}
\end{eqnarray}
for any $x, y\in \mathcal{H}(A)$ and $m\in \mathcal{H}(M)$.
\end{definition}

\begin{definition}
 A module over a non-commutative ternary Leibniz-Nambu-Poisson color algebra $(A, \cdot, [-, -, -], \varepsilon)$ is a bimodule
 $(M, \prec, \succ, \varepsilon)$ over the associative color algebra $(A, \cdot, \varepsilon)$ with three even trilinear applications
$$[-, -, -] : M\otimes L\otimes L\rightarrow M, \quad [-, -, -] : L\otimes M\otimes L\rightarrow M, \quad
[-, -, -] : L\otimes L\otimes M\rightarrow M$$
satisfying the following sets of axioms
\begin{eqnarray}
 [[m, x, y], z, t]]&=&[m, x, [y, z, t]]+\varepsilon(y, z+t)[m, [x, z, t], y]+\varepsilon(x+y, z+t)[[m, z, t], x, y],\label{lpc3a1}\\
{[[x, m, y], z, t]]}&=&[x, m, [y, z, t]]+\varepsilon(y, z+t)[x, [m, z, t], y]+\varepsilon(m+y, z+t)[[x, z, t], m, y],\\
{[[x, y, m], z, t]]}&=&[x, y, [m, z, t]]+\varepsilon(m, z+t)[x, [y, z, t], m]+\varepsilon(y+m, z+t)[[x, z, t], y, m],\\
{[[x, y, z], m, t]]}&=&[x, y, [z, m, t]]+\varepsilon(z, m+t)[x, [y, m, t], z]+\varepsilon(y+z, m+t)[[x, m, t], y, z],\\
{[[x, y, z], t, m]]}&=&[x, y, [z, t, m]]+\varepsilon(z, t+m)[x, [y, t, m], z]+\varepsilon(y+z, t+m)[[x, t, m], y, z], \label{lpc3a5}
\end{eqnarray}
and 
\begin{eqnarray}
 [m\cdot x, y, z]=m\cdot [x, y, m]+\varepsilon(x, y+z)[m, y, z]\cdot x,\\
{[x\cdot m, y, z]}=x\cdot [m, y, z]+\varepsilon(m, y+z)[x, y, z]\cdot m,\\
{[x\cdot y, m, z]}=x\cdot [y, m, z]+\varepsilon(y, m+z)[x, m, z]\cdot y,\\
{[x\cdot y, z, m]}=x\cdot [y, z, m]+\varepsilon(y, z+m)[x, z, m]\cdot y,\label{lpc3a7}
\end{eqnarray}
for any $x, y, z, t\in \mathcal{H}(A)$ and $m\in \mathcal{H}(M)$.
\end{definition}
\begin{remark}
Axioms (\ref{lpc3a1})-(\ref{lpc3a5}) mean that $M$ is a module over a ternary Leibniz color algebra.
\end{remark}

Next we have the following definition.
% We finish this paper by giving two contructions of modules over Leibniz-Poisson color algebras.
\begin{definition}
Let $(L, [-, -], \varepsilon)$ be a Leibniz color algebra and $M$ be a $G$-graded vector space. An $L$-module on $M$ consists of
even $\bf \mathbb{K}$-bilinear maps
$\ast : L\otimes M\rightarrow M$ and $\ast' : M\otimes L\rightarrow M$ such that for any $x, y\in \mathcal{H}(L), m\in\mathcal{H}(M)$,
\begin{eqnarray}
 [x, y]\ast m&=&x\ast(y\ast m)+\varepsilon(y, m)(x\ast m)\ast' y\label{lm11},\\
(m\ast' x)\ast' y&=&m\ast' [x, y]+\varepsilon(x, y)(m\ast' y)\ast' x,\label{lm22}\\
(x\ast m)\ast' y &=&x\ast(m \ast' y)+\varepsilon(m, y) [x, y]\ast m.\label{lm33}
\end{eqnarray}
\end{definition}
\begin{example}
 Any Leibniz color algebra or any Leibniz superalgebras \cite{CQZ} is a module over itself.
\end{example}
In the below proposition, we construct modules on ternary Leibniz color algebras from modules over Leibniz color algebras.
\begin{proposition}
 Let $(M, \ast, \ast', \varepsilon)$ be a module over a Leibniz color algebra $(L, [-, -], \varepsilon)$. Define
\begin{eqnarray}
 [x, y, m]:=x\ast (y\ast m),\quad [x, m, y]:=x\ast (m\ast' y), \quad [m, x, y]:=m\ast' [x,  y],
\end{eqnarray}
for all $x, y\in \mathcal{H}(L)$ and $m\in \mathcal{H}(M)$.\\
Then $M$ is a module over the ternary Leibniz color algebra associates to the Leibniz color algebra $L$ (as in theorem \ref{ll3}).
\end{proposition}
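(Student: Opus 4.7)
The plan is to verify the five axioms (\ref{lpc3a1})--(\ref{lpc3a5}) defining a module over a ternary Leibniz color algebra, recalling from Theorem \ref{ll3} that the ternary bracket on $L$ is the iterated one $[x,y,z]=[x,[y,z]]$. The three module-brackets defined in the statement follow the same iteration pattern, the position of $m$ (first, middle or last) dictating which of the structure maps $\ast$ or $\ast'$ sits in the outermost layer.

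I would begin with (\ref{lpc3a1}), where $m$ occupies the outer first slot. Expanding the left-hand side gives $(m\ast'[x,y])\ast'[z,t]$; applying (\ref{lm22}) to the pair $([x,y],[z,t])$ splits this into $m\ast'[[x,y],[z,t]]$ plus the summand $\varepsilon(x+y,z+t)(m\ast'[z,t])\ast'[x,y]$, which is already the third term of the right-hand side. A single application of the Leibniz color identity (\ref{cpa}) to $[[x,y],[z,t]]$ then distributes the first piece into the remaining two right-hand-side terms after noting that $[y,z,t]=[y,[z,t]]$ and $[x,z,t]=[x,[z,t]]$.

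Next, for (\ref{lpc3a2}) and (\ref{lpc3a3}), where $m$ sits in the outer middle or last slot, the left-hand side takes the form $(x\ast(m\ast'y))\ast'[z,t]$ or $(x\ast(y\ast m))\ast'[z,t]$; in each case I would apply (\ref{lm33}) once to peel off the outermost $\ast'$, and then apply (\ref{lm22}) (respectively (\ref{lm33})) once more to the resulting interior product. For (\ref{lpc3a4}) and (\ref{lpc3a5}), where $m$ lies inside one of the rightmost slots, the left-hand side becomes $[x,[y,z]]\ast(m\ast't)$ or $[x,[y,z]]\ast(t\ast m)$; a direct application of (\ref{lm11}) followed by a second application of (\ref{lm11}) to the resulting $[y,z]\ast(\cdot)$ term delivers exactly the three summands of the right-hand side.

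The hard part will not be any single identity but the bookkeeping of bicharacter factors: one must track the degrees of composite elements such as $y\ast m$, $m\ast'y$ and $[x,y]$ in order that each $\varepsilon$ produced by (\ref{lm11})--(\ref{lm33}) and (\ref{cpa}) matches the corresponding $\varepsilon$ appearing in (\ref{lpc3a1})--(\ref{lpc3a5}). Beyond this clerical task no new idea is required: the construction essentially transports the iterative \emph{bracket-inside-bracket} pattern of Theorem \ref{ll3} to the module setting, and the five axioms follow by unfolding definitions in exactly the same order used in the proof of Theorem \ref{ll3}.
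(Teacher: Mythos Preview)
Your proposal is correct and is exactly the ``straightforward computation'' the paper alludes to: each of the five module axioms is obtained by unfolding the iterated definitions and applying one or two instances of (\ref{lm11})--(\ref{lm33}) together with the Leibniz identity (\ref{cpa}), precisely as you describe. The only caveat is cosmetic: the intermediate axioms between (\ref{lpc3a1}) and (\ref{lpc3a5}) are not individually labelled in the paper, so your references \texttt{lpc3a2}--\texttt{lpc3a4} would be undefined.
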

\begin{proof}
 It is proved by a straightforward computation.
\end{proof}
In order to have modules over non-commutative ternary Leibniz-Nambu-Poisson color algebras via morphism, we need the below definition.
\begin{definition}
 Let $(L, \cdot, [-, -, -], \varepsilon)$ and $(L', \cdot', [-, -, -]', \varepsilon)$ be two non-commutative ternary Leibniz-Nambu-Poisson
 color algebras. Let $\alpha : L\rightarrow L'$ be an even linear mapping such that, for any $x, y, z\in \mathcal{H}(L)$,
$$\alpha(x\cdot y)=\alpha(x)\cdot\alpha(y)\quad\mbox{and}\quad \alpha([x, y, z])=[\alpha(x), \alpha(y), \alpha(z)]'.$$
Then $\alpha$ is called a morphism of non-commutative ternary Leibniz-Nambu-Poisson color algebras.
\end{definition}
Then we have the following result.
\begin{theorem}
 Let $(L, \cdot, [-, -, -], \varepsilon)$ and $(L', \cdot', [-, -, -]', \varepsilon)$ be two ternary Leibniz-Nambu-Poisson color algebras and 
$\alpha : L\rightarrow L'$ be a morphism of non-commutative ternary Leibniz-Nambu-Poisson color algebras. Define 
\begin{eqnarray}
x\ast m &=&\alpha(x)\cdot' m, \quad m\ast x =m\cdot'\alpha(x), \quad\mbox{and}\quad\label{m1}\\
\{ m, x, y\}&=&[m, \alpha(x), \alpha(y)]',\quad
\{x, m, y\}=[\alpha(x), m, \alpha(y)]',\quad\mbox{and}\quad
\{x, y, m\}=[\alpha(x), \alpha(y), m]',\qquad\label{m2}
\end{eqnarray}
for any $x, y\in \mathcal{H}(L)$ and $m\in \mathcal{H}(L')$. Then, with these five maps, $L'$ is a module over $L$. 
\end{theorem}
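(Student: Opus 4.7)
The plan is to verify each of the module axioms (\ref{mha3})--(\ref{mha5}) and (\ref{lpc3a1})--(\ref{lpc3a7}) by transporting the computation from $L$ to $L'$ through the morphism $\alpha$, and then invoking the corresponding identity that already holds in $L'$ (which is itself a non-commutative ternary Leibniz--Nambu--Poisson color algebra). Since $\alpha$ is even, the degrees of $\alpha(x)$ and $x$ agree, so every $\varepsilon$-factor that arises on the $L$-side matches the corresponding $\varepsilon$-factor on the $L'$-side without any adjustment.

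First I would dispose of the bimodule axioms for $(L',\ast)$ over the associative color algebra $(L,\cdot)$. By definition (\ref{m1}) and the associativity of $\cdot'$,
\begin{eqnarray*}
x\ast(y\ast m)&=&\alpha(x)\cdot'(\alpha(y)\cdot' m)=(\alpha(x)\cdot'\alpha(y))\cdot' m\\
&=&\alpha(x\cdot y)\cdot' m=(x\cdot y)\ast m,
\end{eqnarray*}
where I used that $\alpha$ preserves the associative product. The axioms $(m\ast x)\ast y=m\ast(x\cdot y)$ and $x\ast(m\ast y)=(x\ast m)\ast y$ follow in exactly the same manner from associativity of $\cdot'$ and the morphism property.

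Next I would tackle the five ternary Leibniz module identities (\ref{lpc3a1})--(\ref{lpc3a5}). Each of them, once the definitions (\ref{m2}) are unfolded and the morphism property $\alpha([x,y,z])=[\alpha(x),\alpha(y),\alpha(z)]'$ is applied wherever a bracket in $L$ sits inside a bracket in $L'$, reduces to a single instance of the ternary $\varepsilon$-Nambu identity (\ref{lci}) in $L'$, evaluated on the quintuple obtained from $(m,\alpha(x),\alpha(y),\alpha(z),\alpha(t))$ by placing $m$ in the slot prescribed by its position on the left-hand side. Similarly, the four compatibility identities (\ref{lpc3a7}) translate directly into the right ternary Leibniz rule (\ref{cal3}) for $(L',\cdot',[-,-,-]')$, with one of the four homogeneous arguments replaced by $m\in\mathcal{H}(L')$.

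I do not foresee a genuine obstacle: the argument is a systematic translation, and the only point to watch is the agreement of $\varepsilon$-factors, which is automatic because $\alpha$ has degree zero. The only mild nuisance is the number of axioms (three bimodule relations, five ternary module identities, and four compatibility conditions), but all twelve verifications follow the same pattern, so in the written proof it would suffice to carry out one representative case in each family and remark that the others are identical.
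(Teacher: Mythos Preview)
Your proposal is correct and follows essentially the same approach as the paper: transport the computation to $L'$ via $\alpha$, use that $\alpha$ preserves degrees so the $\varepsilon$-factors match, and then invoke the corresponding identity in $L'$. The paper in fact only writes out one representative case (the verification of (\ref{lpc3a5})) and dismisses the rest with ``The rest of the relations are proved similarly,'' so your plan to treat one case per family and note that the others are identical is, if anything, more detailed than the original.
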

\begin{proof}
For all $x, y, z\in \mathcal{H}(L)$, we have
\begin{eqnarray}
 \{\{x, y, z\}, t, m\}&
=&[[\alpha(x), \alpha(y), \alpha(z)]', \alpha(t), m]'\nonumber\\
&=&[\alpha(x), \alpha(y), [\alpha(z), \alpha(t), m]']'+\varepsilon(z, t+m)[\alpha(x), [\alpha(y), \alpha(t), m]', \alpha(z)]'\nonumber\\
&&+\varepsilon(y+z, t+m)[[\alpha(x), \alpha(t), m]', \alpha(y), \alpha(z)]'\nonumber\\
&=&\{x, y, \{z, t, m\}\}+\varepsilon(z, t+m)\{x, \{y, t, m\}, z\}+\varepsilon(y+z, t+m)\{\{x, t, m\}, y, z\}.\nonumber
\end{eqnarray}
The rest of the relations are proved similarly.
\end{proof}
\begin{remark}
 Any non-commutative ternary Leibniz-Nambu-Poisson color algebra is a module over itself. 
\end{remark}

\begin{corollary}
 Let $(L, \cdot, [-, -, -], \varepsilon)$ be a non-commutative ternary Leibniz-Nambu-Poisson color algebra and $\alpha : L\rightarrow L$ be an
 endomorphism of $L$. Then (\ref{m1}) and (\ref{m2}) define another module structure of $L$ over itself. 
\end{corollary}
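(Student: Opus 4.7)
The plan is to observe that this corollary is nothing more than the preceding theorem specialized to the case $L' = L$, with the morphism $\alpha : L \to L'$ taken to be the given endomorphism. So the strategy is simply to verify that this specialization is legitimate and that no extra checking is needed beyond what the theorem already provides.

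First I would point out that by assumption $\alpha : L \to L$ is an endomorphism of non-commutative ternary Leibniz-Nambu-Poisson color algebras, that is, an even linear map satisfying
\begin{eqnarray*}
\alpha(x\cdot y) &=& \alpha(x)\cdot \alpha(y),\\
\alpha([x,y,z]) &=& [\alpha(x),\alpha(y),\alpha(z)],
\end{eqnarray*}
for all $x, y, z \in \mathcal{H}(L)$. This is exactly the data required to invoke the previous theorem with $L' := L$, $\cdot' := \cdot$, and $[-,-,-]' := [-,-,-]$.

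Applying the preceding theorem verbatim with this choice gives that $L'=L$ is a module over $L$ whose structure maps
\begin{eqnarray*}
x\ast m &=& \alpha(x)\cdot m, \qquad m\ast x = m\cdot\alpha(x),\\
\{m,x,y\} &=& [m,\alpha(x),\alpha(y)], \qquad \{x,m,y\} = [\alpha(x),m,\alpha(y)], \qquad \{x,y,m\} = [\alpha(x),\alpha(y),m],
\end{eqnarray*}
coincide precisely with the expressions (\ref{m1}) and (\ref{m2}). All the module axioms (\ref{mha3})--(\ref{mha5}) and (\ref{lpc3a1})--(\ref{lpc3a7}) were verified in the proof of the theorem by pushing the morphism $\alpha$ through the defining identities of $L'$; no part of that verification uses that the source and target algebras are distinct.

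Since there is no genuine obstacle once the theorem is in place, the only thing to double-check is that $\alpha$ being an endomorphism (rather than a morphism to a strictly different algebra) does not trivialize or obstruct any axiom. A one-line remark will suffice: the theorem's derivation is formal in $\alpha$, so taking $L' = L$ is allowed and yields the claimed module structure.
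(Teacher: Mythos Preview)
Your proposal is correct and takes essentially the same approach as the paper: the corollary is stated without proof because it is the immediate specialization of the preceding theorem to the case $L'=L$ with $\alpha$ an endomorphism, exactly as you argue.
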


%  \section{Annexes}

% % \begin{theorem}
% %  Trialg to $-1$-Tridendriform
% % \end{theorem}
% % \begin{corollary}
% %  abcd
% % \end{corollary}
% % \begin{definition}
% %  Let $(A, \cdot, \varepsilon)$ be left permutative color algebra and 
% % \begin{eqnarray}
% %  [x, y\cdot z]&=&[x, y]\cdot z+\varepsilon(x, y)y\cdot[x, z]\\
% % {[x\cdot y, z]}&=&x\cdot[y, z]+\varepsilon(x, y)y\cdot[x, z]\\
% % {[x, y]\cdot z}&=&-\varepsilon(x, y)[y, x]\cdot z
% % \end{eqnarray}
% % for any $x, y, z\in \mathcal{H}(A)$
% % \end{definition}
% % \begin{proposition}
% %  Let $(A, \cdot, [-, -], \varepsilon)$ be a commutative Poisson color algebra and $\alpha : A\rightarrow A$ an averaging operator.
% % Define
% % $$x\ast y:=\alpha(x)\cdot y\quad \mbox{and}\quad \{x, y\}:=[\alpha(x), y].$$
% % Then $(A, \ast, \{-, -\}, \varepsilon)$ is a dual-left-prePoisson color algebra.
% % \end{proposition}
%%%%%%%%%%%
{\bf Acknowlegments :} 
The author would like to thank Professor Alain Togb\'e of Purdue University for his material support.
%  and the  Referees for their usefull suggestions.

\label{lastpage-01}
\end{document}